\numberwithin{equation}{section}
\numberwithin{figure}{section}
\theoremstyle{plain}
\theoremstyle{plain}
\theoremstyle{remark}
\theoremstyle{plain}
\let \pa \partial
\let \eps \varepsilon
\newtheorem{theorem}{Theorem}[section]
\newtheorem{lemma}{Lemma}[section]
\newtheorem{proposition}{Proposition}[section]
\newtheorem{remark}{Remark}[section]
\let \de=\delta
\let \eps=\varepsilon
\let \la=\lambda
\let \pa=\partial
\let \om=\omega
\providecommand{\lemmaname}{Lemma}
\providecommand{\propositionname}{Proposition}
\providecommand{\remarkname}{Remark}
\providecommand{\theoremname}{Theorem}
\begin{document}

\title[Boltzmann equation with hard potential]{
Space-time structure and particle-fluid duality of solutions for Boltzmann equation with hard potentials}
\author{Yu-Chu Lin}
\address{Yu-Chu Lin, Department of Mathematics, National Cheng Kung
University, Tainan, Taiwan}
\email{yuchu@mail.ncku.edu.tw }
\author{Haitao Wang}
\address{Haitao Wang, Institute of Natural Sciences and School of
Mathematical Sciences, MOE-LSA, CMA-Shanghai, Shanghai Jiao Tong University, Shanghai,
China}
\email{haitallica@sjtu.edu.cn}
\author{Kung-Chien Wu}
\address{Kung-Chien Wu, Department of Mathematics, National Cheng Kung
University, Tainan, Taiwan and National Center for Theoretical Sciences,
National Taiwan University, Taipei, Taiwan}
\email{kungchienwu@gmail.com}

\begin{abstract}
	We study the quantitative pointwise behavior of solutions to the Boltzmann equation for hard potentials and Maxwellian molecules, which generalize the hard sphere case introduced by Liu-Yu in 2004  \cite{[LiuYu]}. The large time behavior of the solution is dominated by fluid structures, similar to the hard sphere case \cite{[LinWangWu3], [LiuYu]}. However, unlike hard sphere, the spatial decay here depends on the potential power  $\gamma$ and the initial velocity weight.  A key challenge in this problem is the loss of velocity weight in linear estimates, which makes standard nonlinear iteration infeasible.
	
	To address this, we develop an Enhanced Mixture Lemma, demonstrating that mixing the transport and gain parts of the linearized collision operator can generate arbitrary-order regularity and decay in both space and velocity variables. This allows us to decompose the linearized solution into fluid (arbitrary regularity and velocity decay) and particle (rapid space-time decay, but with loss of velocity decay) parts, making it possible to solve the nonlinear problem through this particle-fluid duality.
\end{abstract}

\date{\today }
\keywords{Boltzmann equation, hard potential, Maxwellian molecules}

\maketitle

\section{Introduction}

\subsection{The model}

Consider the  Boltzmann equation%
\begin{equation}
\left\{
\begin{array}{l}
\partial _{t}F+\xi \cdot \nabla _{x}F=Q(F,F)\text{,} \\[4mm]
F(0,x,\xi )=F_{0}(x,\xi )\text{,}%
\end{array}%
\right. \quad (t,x,\xi )\in {\mathbb{R}}^{+}\times {\mathbb{R}}^{3}\times {%
\mathbb{R}}^{3}\text{,}  \label{bot.1.a}
\end{equation}%
where $F(t,x,\xi )$ is the velocity distribution function for the particles
at time $t\geq0$, position $x=(x_{1},x_{2},x_{3})\in {\mathbb{R}}^{3}$ and
microscopic velocity $\xi =(\xi _{1},\xi _{2},\xi _{3})\in {\mathbb{R}}^{3}$%
. The bilinear collision operator is given by
\begin{equation*}
Q(F,G)=\frac{1}{2}\int_{{\mathbb{R}}^{3}\times S^{2}}B(|\xi _{\ast }-\xi
|,\Omega )\left\{ F_{\ast }^{\prime }G^{\prime }+G_{\ast }^{\prime
}F^{\prime }-F_{\ast }G-FG_{\ast }\right\} d\xi _{\ast }d\Omega \text{,}
\end{equation*}%
with collision kernel
\begin{equation*}
B(\vartheta )=|\xi -\xi _{\ast }|^{\gamma }\beta (\vartheta ).
\end{equation*}%
Here the usual convention, i.e., $F=F\left( t,x,\xi \right) $, $F_{\ast}=F_{\ast
}\left( t,x,\xi _{\ast }\right) $, $F^{\prime }=F(t,x,\xi )$ and $F_{\ast
}^{\prime }=F\left( t,x,\xi _{\ast }^{\prime }\right) $, is used; the
post-collisional velocities of particles satisfy
\begin{equation*}
\xi ^{\prime }=\xi -[(\xi -\xi _{\ast })\cdot \Omega ]\Omega \text{,}\quad
\xi _{\ast }^{\prime }=\xi +[(\xi -\xi _{\ast })\cdot \Omega ]\Omega \text{,
\ \ }\Omega \in S^{2}\text{.}
\end{equation*}%

Throughout this paper, we consider the \textit{Maxwellian molecules} $\left( \gamma
=0\right) $ and \textit{hard potentials} ($0<\gamma< 1$); and $\beta (\vartheta )$
satisfies the Grad's angular cutoff assumption
\begin{equation*}
0<\beta (\vartheta )\leq C\left\vert \cos \vartheta \right\vert \text{,}
\end{equation*}%
for some constant $C>0$, where $\vartheta $ is defined by
\begin{equation*}
\cos \vartheta =\frac{|(\xi -\xi _{\ast })\cdot \Omega |}{|\xi -\xi _{\ast }|%
}\text{.}
\end{equation*}
The case that  $\gamma=1$ and $\beta (\vartheta )=\lvert \cos \vartheta\rvert$ is called hard sphere model.

It is known that the global Maxwellians are equilibriums to the
Boltzmann equation. In the perturbation regime near the Maxwellian, we look
for the solution in the form of
\begin{equation}
F=\mathcal{M}+\sqrt{\mathcal{M}}f\text{, \ \ \ }F_{0}(x,\xi )=\mathcal{M}%
+\varepsilon \sqrt{\mathcal{M}}f_{0}\text{,}  \label{F}
\end{equation}%
where $\varepsilon >0$ is sufficiently small, with a perturbation $f$ to $%
\mathcal{M}$, where the global Maxwellian $\mathcal{M}$ is normalized as
\begin{equation*}\displaystyle
\mathcal{M}= (2\pi)^{-3/2}e^{-\frac{\lvert \xi \rvert^2}{2}}\text{.}
\end{equation*}%
Substituting \eqref{F} into \eqref{bot.1.a},
the perturbation $f$ satisfies the following equation
\begin{equation}
\left\{
\begin{array}{l}
\partial _{t}f+\xi \cdot \nabla _{x}f=Lf+\Gamma \left( f,f\right)
\vspace {3mm}
\\
f|_{t=0}=\varepsilon f_{0}\text{, \ \ }\left( x,\xi \right) \in \mathbb{R}%
^{3}\times \mathbb{R}^{3}\text{,}%
\end{array}%
\right.  \label{Linearized}
\end{equation}%
where%
\begin{equation*}
Lf=2  \mathcal{M}^{-1/2}Q(  \mathcal{M}, \mathcal{M}^{1/2}f),\qquad \Gamma \left(
f,f\right) = \mathcal{M}^{-1/2}Q(  \mathcal{M}^{1/2}f, \mathcal{M}^{1/2}f) \text{.}
\end{equation*}%
The null space of $L$ is a five-dimensional vector space spanned by an
orthonormal basis $\{\chi _{i}\}_{i=0}^{4}$, where
\begin{equation*}
\mathrm{Ker}(L)=\left\{ \chi _{0},\chi _{i},\chi _{4}\right\} =\left\{ \mathcal{M}%
^{1/2},\ \xi _{i}\mathcal{M}^{1/2},\ \frac{1}{\sqrt{6}}(|\xi |^{2}-3)%
\mathcal{M}^{1/2}\right\} \,\text{,}\quad i=1\text{,}\ 2\text{,}\ 3\text{.}
\end{equation*}%

Based on this property, one can introduce the macro-micro decomposition: let $%
\mathrm{P}_{0}$ be the orthogonal projection with respect to the $L_{\xi
}^{2}$ inner product onto $\mathrm{Ker}(L)$, and $\mathrm{P}_{1}\equiv
\mathrm{Id}-\mathrm{P}_{0}$.

The goal of this paper is to study the explicit space-time pointwise structure of (\ref{Linearized}).

\subsection{Notations}
In order to proceed, we introduce some notations that will be frequently used throughout the paper. We denote $\left\langle \xi \right\rangle ^{\beta }=(1+|\xi
|^{2})^{\beta /2}$ and $\left\langle \xi \right\rangle_{D} ^{\beta }=(D^{2}+|\xi
|^{2})^{\beta /2}$ with $D>0$, $\beta \in {\mathbb{R}}$. For the microscopic variable $%
\xi $, we denote
\begin{equation*}
|g|_{L_{\xi }^{q}}=\Big(\int_{{\mathbb{R}}^{3}}|g|^{q}d\xi \Big)^{1/q}%
\hbox{
if }1\leq q<\infty \hbox{,}\quad \quad |g|_{L_{\xi }^{\infty }}=\sup_{\xi
\in {\mathbb{R}}^{3}}|g(\xi )|\hbox{,}
\end{equation*}%
and the weighted norms can be defined by
\begin{equation*}
|g|_{L_{\xi ,\beta }^{q}}=\Big(\int_{{\mathbb{R}}^{3}}\left\vert
\left\langle \xi \right\rangle ^{\beta }g\right\vert ^{q}d\xi \Big)^{1/q}%
\hbox{
if }1\leq q<\infty \hbox{,}\quad \quad |g|_{L_{\xi ,\beta }^{\infty
}}=\sup_{\xi \in {\mathbb{R}}^{3}}\left\vert \left\langle \xi \right\rangle
^{\beta }g(\xi )\right\vert \hbox{.}
\end{equation*}%
The $L_{\xi }^{2}$ inner product in ${\mathbb{R}}^{3}$ will be denoted by $%
\left\langle \cdot ,\cdot \right\rangle _{\xi }$, i.e.,
\begin{equation*}
\left\langle f,g\right\rangle _{\xi }=\int_{\mathbb{R}^3} f(\xi )\overline{g(\xi )}d\xi %
\hbox{.}
\end{equation*}%
For the Boltzmann equation, the natural norm in $\xi $ is $|\cdot
|_{L_{\sigma }^{2}}$, which is defined as
\begin{equation*}
|g|_{L_{\sigma }^{2}}^{2}=\left|\left \langle \xi \right \rangle ^{\frac{%
\gamma }{2}}g\right|_{L_{\xi }^{2}}^{2}\text{.}
\end{equation*}%

For the space variable $x$, we have similar notations, namely,
\begin{equation*}
|g|_{L_{x}^{q}}=\left( \int_{{\mathbb{R}^{3}}}|g|^{q}dx\right) ^{1/q}%
\hbox{
if }1\leq q<\infty \hbox{,}\quad \quad |g|_{L_{x}^{\infty }}=\sup_{x\in {%
\mathbb{R}^{3}}}|g(x)|\hbox{.}
\end{equation*}

Finally, with $\mathcal{X}$ and $\mathcal{Y}$ being two normed spaces, we
define%
\begin{equation*}
\left\Vert g\right\Vert _{\mathcal{XY}}=\left\vert \left\vert g\right\vert _{%
\mathcal{Y}}\right\vert _{\mathcal{X}}\hbox{,}
\end{equation*}%
and for simplicity, we denote
\begin{equation*}
\left\Vert g\right\Vert _{L^{2}}=\left\Vert g\right\Vert _{L_{\xi
}^{2}L_{x}^{2}}=\left( \int_{\mathbb{R}^{3}}\left\vert g\right\vert
_{L_{x}^{2}}^{2}d\xi \right) ^{1/2}\hbox{.}
\end{equation*}%
For any two functions $f(x,t)$ and $g(x,t)$, we define the space-time
convolution as
\begin{equation*}
f(x,t)\ast _{x,t}g(x,t)=\int_{0}^{t}\int_{{\mathbb{R}}^{3}}f(x-y,t-\tau
)g(y,\tau )dyd\tau .
\end{equation*}%
For simplicity of notations, hereafter, we abbreviate $\leq C$ to $\lesssim $%
, where $C$ is a constant depending only on fixed numbers.

\subsection{Review of previous works and main difficulties}

The perturbation equation \eqref{Linearized} has been studied extensively using spectral analysis and energy methods; see the classical works \cite{[Guo-04],[LiuYu-04],[LiuYangYu], [Nishida-78],[Ukai-74]} and references therein. These studies assume smooth initial data with high Sobolev regularity and obtain decay of the perturbation in a global norm.

To better understand the solution’s quantitative aspects, such as its singularity structure and wave behavior, \cite{[LiuYu]} initiated research on the Green’s function and space-time pointwise behavior for the 1D Boltzmann equation with hard sphere. This approach has proven crucial for the study of the initial-boundary value problem \cite{[LiuYu-07]}, pointwise shock stability \cite{[Yu-10]}, and invariant manifold theory \cite{[LiuYu-13]}. Pointwise behaviors have also been studied for the 3D linearized Boltzmann equation \cite{[LiuYu1],[LiuYu2]}, the Boltzmann equation coupled with an electromagnetic field \cite{[LiYangZhong-20],[LiYangZhong-23]}, and the Boltzmann equation with polynomial tail perturbation \cite{[LinWangWu2]}. Notably, all of these pointwise results are limited to the hard sphere model.

Therefore, it is natural to study the pointwise behavior for other potentials, such as hard, Maxwell gas, and soft potentials, with or without an angular cutoff. In particular, it would be very interesting to understand how solutions behave differently across these models.  Some attempts have been made to study the pointwise structures of Boltzmann equation with hard potentials by the authors and others \cite{[LeeLiuYu-07], [LinWangWu],[LinWangWu3]}, but only at the linearized level. These studies encounter a weight loss in microscopic velocity in the linear estimates, which prevents the nonlinear iteration and closure that has been achieved for the hard sphere case. Incidentally, if one focuses only on spatial asymptotic behavior of the solution, without detailed descriptions inside the finite Mach region, the weighted energy method can be used to obtain the desired results for the nonlinear problem \cite{[LinWangLyuWu]}.

Let us examine the challenges in more detail. It is known that for the angular cutoff model,  the linearized collision operator $L$ in \eqref{Linearized} consists of a multiplicative operator $\nu(\xi)$ and an integral operator $K$:
\[
Lh=-\nu(\xi) h+Kh,
\]
where $\nu(\xi)\sim (1+\lvert \xi \rvert)^{\gamma}$ depends on the potential. The linearized Boltzmann equation can be expressed as
\begin{equation}\label{eq:linear}
	\partial_t h+\xi \cdot\nabla h + \nu(\xi)h=Kh,\qquad  h\big|_{t=0}=h_0.
\end{equation}
Let $\mathbb{S}^t$ denote the solution operator for the damped transport equation on the left-hand side of \eqref{eq:linear}, which is explicitly given by
\begin{equation}\notag
	\mathbb{S}^th_0=e^{-\nu(\xi)t}h_0(x-\xi t,\xi),
\end{equation}
satisfying
\begin{equation}\label{eq:trans}
	\lvert \mathbb{S}^th_0\rvert =\lvert e^{-\nu(\xi)t}h_0(x-\xi t,\xi)\rvert  \lesssim e^{-\nu_0 (1+\lvert \xi \rvert)^\gamma t} \lvert h_0(x-\xi t,\xi)\rvert.
\end{equation}

In the soft potential case, where $\gamma$ is negative, the damping in the exponent becomes degenerate as $\xi$ grows large. This indicates that to achieve time decay, one must sacrifice some weight in the microscopic velocity, as discussed in \cite{[Caflisch],[LinWangLyuWu], [LinWangWu4]}, etc.

In the hard potential case, letting $x-\xi t=y$ and assuming that the initial data $h_0$ is compactly supported in space, say $h_0(x,\cdot)=0$ for $\lvert x\rvert >1$, it readily follows from \eqref{eq:trans} that
\begin{equation}\label{eq:tr-decay}
	\lvert \mathbb{S}^t h_0 \rvert \lesssim \sup\limits_{y\in\mathbb{R}^3} e^{-\nu_0 (t+\lvert x-y\rvert)^{\gamma} t^{1-\gamma}} \lvert h_0(y,\cdot) \rvert_{L^\infty_\xi}\lesssim e^{-\frac{t+\lvert x \rvert^\gamma t^{1-\gamma}}{C}} \lVert h_0 \rVert_{L^\infty_{x,\xi}}.
\end{equation}
When $\gamma=1$, i.e., the hard sphere case, we have
\[
\lVert \mathbb{S}^t h_0 \rVert_{L^\infty_{x,\xi}} \lesssim e^{-\frac{t+\lvert x\rvert}{C}} \lVert h_0 \rVert_{L^\infty_{x,\xi}}.
\]
Note that the solution and initial data are in the same weighted space.

By Duhamel's principle, the solution $h$ to \eqref{eq:linear} solves the integral equation
\begin{equation}\notag
	h(t)=\mathbb{S}^t h_0 +\int_{0}^{t} \mathbb{S}^{t-\tau} K h(\tau) d\tau.
\end{equation}
Then, a Picard-type iteration can be designed to extract the first several terms while \textit{maintaining the space-time exponential decay structure}.  These terms represent the most singular part of the solution and are referred as \textit{particle-like waves}. The interplay between operators $\mathbb{S}^t$ and $K$ provides a regularization mechanism for the remainder, known as the Mixture Lemma \cite{[LiuYu]}, which we will return to shortly.

However, in the hard potential and Maxwell gas cases where $0\leq \gamma<1$, one observes from \eqref{eq:tr-decay} that the spatial variable $x$ can become very large  while the damping exponent remains only of order $O(1)$. This implies that, although the initial data is compactly supported near the origin, it can quickly spread throughout the whole space without decay as time progresses due to the large microscopic velocity. To achieve spatial decay, we need to sacrifice some velocity weight, similarly to the trade-off required in the soft potential case to obtain time decay.  Moreover, velocity weight is sacrificed at each step in the Picard-type iteration, leading to a cumulative loss of velocity weight in the linear estimate, see \cite{[LinWangWu],[LinWangWu3]}. Thus, the nonlinear closure procedure for the hard sphere case cannot be applied.

On the other hand, classical spectral analysis \cite{[EP],[LiuYu], [Ukai-76]} shows that
for values of $\gamma$ within the range $0\leq \gamma\leq 1$, the discrete spectrum of the semigroup for the linearized Boltzmann equation,  $\displaystyle e^{(-\xi\cdot\nabla + L)t}$, exhibit similar behavior, corresponding to the \textit{fluid-like  component} of the solution and dominating its large-time behavior. Intuitively, the macroscopic fluid behavior is associated with thermodynamic equilibrium, close to a Maxwellian, and should be irrelevant to the loss of velocity weight.

Therefore, to overcome the loss of velocity weight while preserving the space-time structure of the solution, we need to develop a more complete separation of the particle and fluid parts in the linear estimates and implement a corresponding decomposition at the nonlinear level.

\subsection{Key ingredients and ideas} Now we introduce the key ingredients and ideas to realize the complete particle-fluid decomposition.

\subsubsection{Enhanced Mixture Lemma}
Motivated by \eqref{eq:linear}, a Picard-type iteration is designed by
\begin{align}
		\partial _{t}h^{(0)}+\xi \cdot \nabla _{x}h^{(0)}+\nu (\xi )h^{(0)}=0,\qquad &
	h^{(0)}\big|_{t=0}=h_{0}, \notag\\
		\partial _{t}h^{(m)}+\xi \cdot \nabla _{x}h^{(m)}+\nu (\xi
	)h^{(m)}=Kh^{(m-1)},\qquad  &h^{(m)}\big|_{t=0}=0,\mbox{ for }m\geq 1. \notag 
\end{align}
The solution can be written down explicitly
\begin{equation*}
	h^{(m)}\left( t,x,\xi \right) =\mathbf{M}_{m}^{t}h_{0}\text{, }\quad m\geq 0,
\end{equation*}
where \textit{mixture operator} $\mathbf{M}_{m}^{t}$ is defined by
\begin{equation*}
	\mathbf{M}_{m}^{t}h_{0}(x,\xi)=\int_{0}^{t}\int_{0}^{t_{1}}\cdots \int_{0}^{t_{m-1}}%
	\mathbb{S}^{t-t_{1}}K\mathbb{S}^{t_{1}-t_{2}}K\cdots \mathbb{S}%
	^{t_{m-1}-t_{m}}K\mathbb{S}^{t_{m}}h_{0}dt_{1}dt_{2}\cdots dt_{m}.
\end{equation*}

Heuristically, we can consider $K$ as a collision with ambient particles that obey a Maxwellian distribution, and regard the number of mixing as the number of times a particle collides with background during traveling. The more collisions the particles undergo, the closer they approach thermo-equilibrium.

As shown in \cite{[LiuYu]}, the mixture operator $\mathbf{M}_m^t$ can transfer regularity from the velocity variable to the spatial variable. Specifically, we have the following lemma:
\begin{lemma}[Mixture Lemma, \cite{[LiuYu]}] For each given $k\geq 0$, there exists a positive constant $C_k$ such that
\begin{equation*}
	\lVert \partial_x^k \mathbf{M}_{2k}^t h_0 \rVert_{L^2_x(L^2_\xi)} \leq C_k e^{-\nu_0 t} \left(		\lVert h_0 	\rVert_{L^2_x(L^2_\xi)}+ 	\lVert \partial_\xi^k h_0 	\rVert_{L^2_x(L^2_\xi)}		\right).
\end{equation*}
\end{lemma}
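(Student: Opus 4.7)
The plan is to exploit the explicit streaming representation $\mathbb{S}^s g(x,\xi)=e^{-\nu(\xi)s}g(x-\xi s,\xi)$. Expanding every $K$ in the composition into its kernel $k(\xi,\xi_\ast)$, the iterated composition $\mathbb{S}^{s_1}K\mathbb{S}^{s_2}K\cdots K\mathbb{S}^{s_{2k+1}}h_0$ becomes an iterated integral over intermediate velocities $\xi^{(1)},\ldots,\xi^{(2k)}$ in which $h_0$ is evaluated at the translated point $x-\xi s_1-\xi^{(1)}s_2-\cdots-\xi^{(2k)}s_{2k+1}$, with $s_j=t_{j-1}-t_j$, $t_0=t$, and $t_{2k+1}=0$. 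When $s_{j+1}>0$, the change of variables $\xi^{(j)}\mapsto y_j:=x-\xi s_1-\cdots-\xi^{(j-1)}s_j-\xi^{(j)}s_{j+1}$ (Jacobian $s_{j+1}^{-3}$) converts that particular $\xi$-integration into a spatial convolution. Performing such a change of variables once per pair of mixings gives a representation of $\mathbf{M}_{2k}^t h_0$ as a $k$-fold spatial convolution whose kernel is smooth in $x$ and carries Gaussian-type decay in the dummy velocities inherited from the Maxwellian hidden inside $k(\cdot,\cdot)$.

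Once this spatial-convolution form is available, $\partial_x^k$ can be transferred to the kernel via integration by parts in the $y_j$-variables. Each transferred derivative lands either on a factor $k(\xi^{(j-1)},\xi^{(j)})$ as a $\xi$-derivative (controlled, up to a polynomial weight, by the Grad-cutoff smoothness of $k$) or, when the chain of integrations by parts reaches the innermost slot, as a $\xi$-derivative of $h_0$. The latter route is exactly what produces the $\lVert\partial_\xi^k h_0\rVert_{L^2_x(L^2_\xi)}$ term on the right-hand side, whereas the former routes close against $\lVert h_0\rVert_{L^2_x(L^2_\xi)}$ alone. A Young-type inequality applied to the resulting spatial convolution controls the $L^2_x$-norm, and the $L^2_\xi$-boundedness of $K$ together with its $\xi$-derivatives closes the velocity norm.

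The cost of each integration by parts is one factor $s_{j+1}^{-1}$, and the key technical question is integrability of these factors over the simplex $\Delta_{2k}=\{t\geq t_1\geq\cdots\geq t_{2k}\geq 0\}$. This is where the choice $m=2k$ becomes essential: for each of the $k$ desired spatial derivatives we reserve a pair of adjacent time variables $(s_{2j-1},s_{2j})$ and split the simplex according to whether $s_{2j-1}$ or $s_{2j}$ is the larger, performing the change of variables on the larger one; the singular $s^{-1}$ is then integrated only on a set where the variable is bounded below by $(s_{2j-1}+s_{2j})/2$, and the remaining time integrations are harmless. Finally, the cumulative damping $\prod_j e^{-\nu(\xi^{(j)})s_j}\leq e^{-\nu_0\sum_j s_j}=e^{-\nu_0 t}$ supplies the claimed prefactor.

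The main obstacle I anticipate is precisely this $1/s$-singularity bookkeeping: a naive estimate factor-by-factor would produce divergent time integrals, so the pairing strategy must be used non-trivially, and when product-rule cross-terms send several derivatives onto adjacent kernel factors one must verify that all resulting contributions can still be absorbed by the pair structure. A secondary point is checking that $\xi$-derivatives of $k(\xi,\xi_\ast)$ remain admissible in $L^2_\xi$ uniformly for $\gamma\in[0,1)$; this is standard from the explicit formula for the Grad-cutoff kernel but requires some care because of the factor $|\xi-\xi_\ast|^\gamma$.
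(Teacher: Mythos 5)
The statement you are proving is cited in the paper from \cite{[LiuYu]} and is not reproved there; the paper's own technical work goes into the \emph{Enhanced} Mixture Lemma (Lemma \ref{Enhanced Mixture Lemma}), which generalizes it. Your outline faithfully reproduces the original Liu--Yu strategy: write $\mathbf{M}_{2k}^{t}h_{0}$ as an iterated kernel integral, change variables to convert a velocity integration into a spatial convolution (Jacobian $s_{j+1}^{-3}$, absorbed by the Gaussian scale in the kernel), transfer $\partial_{x}$ through integration by parts at the cost of one $s^{-1}$ per derivative, and control the resulting time integral by pairing adjacent intervals and noting $s_{j}^{-1}\lesssim (s_{2j-1}+s_{2j})^{-1}$, which is integrable over the simplex because $\int_{0}^{u_{j}}dv_{j}$ supplies a compensating factor $u_{j}$. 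Derivatives that eventually reach the innermost slot land on $h_{0}$ and produce the $\lVert\partial_{\xi}^{k}h_{0}\rVert$ term. That is a correct sketch, and it explains why $N=2k$ suffices.

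The paper's proof of the Enhanced Mixture Lemma, however, takes a genuinely different route. Rather than passing through a spatial change of variables and Young's inequality, it works directly with the iterated representation and uses the algebraic identity
\begin{equation*}
\partial_{x}h_{0}\bigl(x-\textstyle\sum_{i}\xi_{i}(t_{i}-t_{i+1}),\xi_{N}\bigr)
=-\frac{\partial_{\xi_{j}}+\partial_{\xi_{j-1}}}{t_{j-1}-t_{j+1}}\,
h_{0}\bigl(x-\textstyle\sum_{i}\xi_{i}(t_{i}-t_{i+1}),\xi_{N}\bigr)\text{,}
\end{equation*}
so the factor $1/(s_{j}+s_{j+1})$ appears immediately without any simplex splitting. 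Every such velocity derivative is then integrated by parts onto a kernel factor $k(\xi_{j},\xi_{j+1})$ (never onto $h_{0}$), using the new derivative bounds on $k$ in Proposition \ref{prop: Derivatives of kernel k} and the Caflisch-type velocity convolution Lemma \ref{lem-Caflisch}. To guarantee that each kernel factor bears at most one derivative, the index pairs are spaced three steps apart, which is why the paper requires $N\geq 3(\lvert\alpha\rvert+\lvert\beta\rvert)$ rather than $N=2\lvert\alpha\rvert$. The trade-off is clear: your route is more economical in the number of mixings but pays with the extra $\lVert\partial_{\xi}^{k}h_{0}\rVert$ term; the paper's route spends more mixings but gets the cleaner right-hand side $\lVert h_{0}\rVert_{L^{2}_{\xi}L^{2}_{x}}$, and in the same stroke handles arbitrary velocity weights $(1+\lvert\xi\rvert)^{P}$ and output $\xi$-derivatives $\partial_{\xi}^{\beta}$, which the classical lemma does not give and which are indispensable for closing the nonlinear iteration once velocity weight is being lost. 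One point your sketch would need to make precise, and which the spacing-three device in the paper resolves cleanly, is the handling of product-rule cross-terms that would otherwise load two $\xi$-derivatives onto the same kernel factor; this requires either second-derivative estimates of $k$ or a careful redistribution, and is where a naive $N=2k$ pairing can be fragile for the $0\leq\gamma<1$ kernels because of the extra $\lvert\xi_{j}-\xi_{j+1}\rvert^{-(1+\varsigma_{j})}$ singularities.
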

In the hard sphere case, as previously mentioned, particle-like waves decay exponentially in both space and time while retaining velocity weight, so the primary concern here is regularity. The above mixture lemma is sufficient for this purpose.

However, for hard potentials, additional challenges arise, including the loss of velocity weight and spatial decay. We therefore develop an enhanced version of the mixture lemma, which demonstrates  that mixing not only transfers regularity from velocity to space, but also generates regularity and decay in both space and velocity up to an arbitrary order as one desires.
\begin{lemma}[Enhanced Mixture Lemma] \hfill
	\begin{enumerate}
		\item For any $\alpha ,\beta \in \mathbb{N}_{0}^{3}$, any $P\geq 0$, when
		\begin{equation*}
			N\geq \max \left\{ P+\frac{|\alpha |+|\beta |+|\alpha |\gamma }{2},3\left(
			|\alpha |+|\beta |\right) \right\} \text{,}
		\end{equation*}%
		we have
		\begin{equation*}
			\left\Vert \left( 1+|\xi |\right) ^{P}\partial _{x}^{\alpha }\partial _{\xi
			}^{\beta }\mathbf{M}_{N}^{t}h_{0}\right\Vert _{L_{\xi }^{2}L_{x}^{2}}\leq
			Ce^{-ct}\left\Vert h_{0}\right\Vert _{L_{\xi }^{2}L_{x}^{2}}\text{.}
		\end{equation*}	
		\item Suppose $h_{0}\left( x,\xi \right) =0$ for $|x|>1$, for any $%
		\alpha ,\beta \in \mathbb{N}_{0}^{3}$, any $M, P\geq 0$, any $Q\in \mathbb{R}$,
		when
		\begin{equation*}
			N\geq \max \left\{ \frac{P+\left( |\alpha |+|\beta |+|\alpha |\gamma \right)
				+M\left( 1-\gamma \right) -Q}{2}\text{, }3\left( |\alpha |+|\beta |\right)
			\right\} \text{,}
		\end{equation*}%
		we have
		\begin{equation*}
			\left\vert \partial _{x}^{\alpha }\partial _{\xi }^{\beta }\mathbf{M}%
			_{N}^{t}h_{0}\left( t,x,\xi \right) \right\vert \leq Ce^{-c_{0}\nu (\xi
				)t}\left( 1+|x|\right) ^{-M}\left( 1+|\xi |\right) ^{-P}\left\Vert \left(
			1+|\xi |\right) ^{Q}h_{0}\right\Vert _{L_{x,\xi }^{\infty }}\text{.}
		\end{equation*}
	\end{enumerate}
\end{lemma}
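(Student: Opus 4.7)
The plan is to prove both parts by induction on $N$, using three elementary mechanisms repeatedly. First, the pointwise identity
\begin{equation*}
\partial_{\xi_j}(\mathbb{S}^s g)=\mathbb{S}^s(\partial_{\xi_j}g)-s\,\mathbb{S}^s(\partial_{x_j}g)-s\,(\partial_{\xi_j}\nu)(\xi)\cdot\mathbb{S}^s g
\end{equation*}
exchanges a $\xi$-derivative for an $x$-derivative at the cost of a linear factor $s$. Second, the gain operator $K$ should satisfy the weight-gaining and regularizing bounds $|\langle\xi\rangle^P Kf|_{L^2_\xi}\lesssim |f|_{L^2_\xi}$ for any $P\geq 0$ and $|\partial_\xi^\beta Kf|_{L^2_\xi}\lesssim \sum_{|\beta'|\leq|\beta|}|\partial_\xi^{\beta'}f|_{L^2_\xi}$, both consequences of the Gaussian-type decay of Grad's kernel for Maxwellian molecules and hard potentials. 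Third, the damping $e^{-\nu(\xi)s}$ absorbs polynomial factors of $s$ via $\int_0^t s^k e^{-c\nu(\xi)s}\,ds\lesssim\langle\xi\rangle^{-\gamma(k+1)}$.

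For Part (1), I would expand $\partial_x^\alpha\partial_\xi^\beta\mathbf{M}_N^t h_0$ by Leibniz and by mechanism (i), producing finitely many terms of the schematic form
\begin{equation*}
\int p(\xi,t-t_1,\ldots,t_N)\,\mathbb{S}^{t-t_1}\widetilde K\cdots \mathbb{S}^{t_{N-1}-t_N}\widetilde K\,\mathbb{S}^{t_N}\bigl(\partial_x^{\alpha'}\partial_\xi^{\beta'}h_0\bigr)\,dt_1\cdots dt_N,
\end{equation*}
where $\widetilde K$ abbreviates $K$ or its commutator with $\partial_\xi$, the time integration runs over the simplex $0<t_N<\cdots<t_1<t$, and $p$ is a polynomial in the time increments. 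Mechanism (i) redistributes the $|\beta|$ $\xi$-derivatives into $x$-derivatives at polynomial cost; mechanism (ii) pushes the outer weight $\langle\xi\rangle^P$ inside through each $K$ and simultaneously allows $\partial_\xi$ to traverse $K$; and mechanism (iii) absorbs every residual polynomial factor of $s$ into negative powers of $\langle\xi\rangle^\gamma$, which are harmless on the left-hand side. A careful accounting of these exchange rates--each half-power of $\langle\xi\rangle^P$ and each $\xi$-derivative paying one mixing, each $x$-derivative paying $(1+\gamma)/2$ mixings--produces the claimed threshold $N\geq P+(|\alpha|+|\beta|+|\alpha|\gamma)/2$, while the auxiliary bound $N\geq 3(|\alpha|+|\beta|)$ supplies spare mixings needed to carry the commutator $[\partial_\xi^\beta,K]$ and the Sobolev embedding used in Part~(2).

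For Part (2), the additional spatial decay comes from the compact support of $h_0$. Writing $\mathbf{M}_N^t h_0$ as a full iterated integral over the time simplex and the intermediate velocity variables $\xi_1,\ldots,\xi_N$ introduced by the $K$ operators, the constraint that the spatial argument of $h_0$ lies in $\{|y|\leq 1\}$ forces
\begin{equation*}
\bigl|x-\xi(t-t_1)-\xi_1(t_1-t_2)-\cdots-\xi_N t_N\bigr|\leq 1,
\end{equation*}
so that $\max\{|\xi|,|\xi_1|,\ldots,|\xi_N|\}\gtrsim |x|/(Nt)$. The product $e^{-c\nu(\xi)(t-t_1)}\prod_i e^{-c\nu(\xi_i)(t_{i-1}-t_i)}$ therefore furnishes a spatial damping of order $\exp(-c|x|^\gamma t^{1-\gamma}/N^\gamma)$, which dominates any $(1+|x|)^{-M}$ at the explicit cost of $M(1-\gamma)$ extra mixings in the threshold (vanishing when $\gamma=1$, as expected from the hard-sphere case). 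The velocity factor $(1+|\xi|)^{-P}$ is extracted by splitting off a small fraction of the outer damping, $e^{-\nu(\xi)t}=e^{-c_0\nu(\xi)t}\cdot e^{-(1-c_0)\nu(\xi)t}$, and absorbing powers of $\langle\xi\rangle$ into the second factor (for $\gamma>0$) or via mechanism (ii) (for $\gamma=0$). The pointwise upgrade from the $L^2$ framework of Part~(1) is then obtained by Sobolev embedding on $\mathbb{R}^3_x\times\mathbb{R}^3_\xi$, which is precisely what the $3(|\alpha|+|\beta|)$ lower bound pays for.

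The main obstacle will be the combinatorial bookkeeping in Part~(1): across the $N$ nested layers one must simultaneously track the migration of $|\beta|$ $\xi$-derivatives, the distribution of the $|\alpha|$ $x$-derivatives, the emergence of $\nu'(\xi),\nu''(\xi),\ldots$ factors, and the polynomial-in-time factors produced at each step, all while guaranteeing that every such power is ultimately payable by $e^{-\nu(\xi)s}$ with the correct power of $\gamma$. A secondary difficulty is the Maxwell case $\gamma=0$, where $\nu(\xi)$ is essentially constant and mechanism (iii) yields no velocity decay at all; the entire burden of producing the weight $\langle\xi\rangle^P$ then shifts to mechanism (ii), making the sharp Gaussian-tail estimate on Grad's kernel for $k(\xi,\xi_\ast)$ unavoidable.
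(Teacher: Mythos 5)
There is a genuine gap. Your mechanisms, as stated, produce terms of the form $\int p(\ldots)\,\mathbb{S}^{\cdots}K\cdots K\,\mathbb{S}^{\cdots}(\partial_x^{\alpha'}\partial_\xi^{\beta'}h_0)$ — you say so yourself — but the conclusion of Part~(1) requires a bound by $\|h_0\|_{L^2_\xi L^2_x}$ with \emph{no} derivatives on $h_0$, and mechanism~(i) only trades $\partial_\xi$ for $\partial_x$, which commutes through every $K$ and every $\mathbb{S}^s$ and so inexorably lands on $h_0$. The indispensable step, which is absent from your outline, is to write $\mathbf{M}_N^t h_0$ as a fully explicit iterated integral over the time simplex and the intermediate velocities $\xi_1,\dots,\xi_N$, observe that $\partial_x h_0\bigl(x-\sum_i\xi_i(t_i-t_{i+1}),\xi_N\bigr)=-\tfrac{1}{t_{j-1}-t_{j+1}}(\partial_{\xi_j}+\partial_{\xi_{j-1}})h_0$ for $2\le j\le N-1$, and then integrate by parts in the $\xi_j$ to shove these derivatives off $h_0$ and onto the kernel factors $k(\xi_j,\xi_{j+1})$ and the exponential weights $e^{-\nu(\xi_j)(t_j-t_{j+1})}$. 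It is precisely this transfer — together with the fact that each Grad kernel $k(\xi_j,\xi_{j+1})$, having a $|\xi_j-\xi_{j+1}|^{-1}$ singularity, can absorb at most one derivative and remain integrable — that forces the derivatives to be distributed among well-separated intermediate indices and produces the threshold $N\ge 3(|\alpha|+|\beta|)$. Without this device, nothing in your three mechanisms removes the derivatives from $h_0$.

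Two auxiliary claims are also off. Mechanism (ii)'s estimate $|\langle\xi\rangle^P Kf|_{L^2_\xi}\lesssim|f|_{L^2_\xi}$ does \emph{not} hold for arbitrary $P\ge 0$: each application of $K$ gains exactly $2-\gamma$ in the polynomial weight (this is the content of the paper's kernel bound $\int|k(\xi,\eta)|\langle\eta\rangle^\tau\,d\eta\lesssim\langle\xi\rangle^{\tau+\gamma-2}$), so the weight $(1+|\xi|)^P$ must be earned by $O(P)$ mixings, not by a single $K$. And the lower bound $N\ge3(|\alpha|+|\beta|)$ has nothing to do with Sobolev embedding — it already appears in Part~(1), which is purely $L^2$, and the pointwise Part~(2) is proven directly by exploiting the compact support of $h_0$ inside the iterated integral (forcing $|\xi|\gtrsim|x|/t$ on the comparable-velocity region $\mathbf{D}_2$, so that the retained damping $e^{-c\nu(\xi)t}$ supplies both the $(1+|x|)^{-M}$ and the $(1+|\xi|)^{-P}$), not by upgrading Part~(1) through an embedding. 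Your spatial-decay heuristic for Part~(2) is on the right track, but coupling it to an embedding of the $L^2$ result is not how the estimate closes.
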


\subsubsection{Particle-fluid duality} With the Enhanced Mixture Lemma, we decompose the solution into a particle-like part, which corresponds to those experiencing insufficient collisions, and a fluid-like part, which constitutes the remaining part. Specifically, we define:
\begin{equation*}
 \mathbb{G}^t_{\ast} h_0 \equiv \sum_{k=0}^{N} h^{(k)},\qquad  \mathbb{G}^t_{\sharp} h_0 \equiv h-\mathbb{G}^t_{\ast} h_0,
\end{equation*}
where  $h$ is the solution to linearized Boltzmann equation \eqref{eq:linear}, and $N$ is an integer chosen based on our needs. The fluid-like part $\mathbb{G}^t_{\sharp} h_0$, denoted by $\mathcal{R}^{(N)}$ for simplicity, satisfies
\begin{equation*}
\partial _{t}\mathcal{R}^{(N)}+\xi \cdot \nabla _{x}\mathcal{R}^{(N)}-L\mathcal{R}^{(N)}=Kh^{(N)}\text{,}\qquad \mathcal{R}^{(N)}\big|_{t=0}=0\text{.}
\end{equation*}%
Thanks to the Enhanced Mixture Lemma, the source term $Kh^{(N)}$  provides sufficient velocity decay and spatial decay. However, this decomposition has a drawback:  the solution operator $\mathbb{G}^t$ gains an extra decay of $(1+t)^{-1/2}$ when acting on purely microscopic data, but the decomposition above breaks the macro-micro structure. By combining the decompositions from both the spectral and particle-fluid perspectives, we  can reassemble the solution operator to restore the separation-of-scale property (see Theorem \ref{PF-estimate}).

The result reveals a duality between the particle and fluid aspects in the Boltzmann equation. The particle-like part inherits the singularity of the initial data, decays quickly in time, and its spatial decay rate depends on the intermolecular potential and initial velocity weight. Conversely, the fluid-like part is regular, governs the large-time behavior, decays rapidly in velocity, and propagates at essentially finite speed in space-time.

\subsubsection{Decomposition of the nonlinear solution}
Due to nonlinear interactions, we cannot expect a fully  separation of the particle and fluid components for the nonlinear problem, as is possible for the linearized case. Nevertheless, given their distinct structures, we use $\mathbb{G}_{\ast}^t$ and $\mathbb{G}_{\sharp}^t$ to decompose the solution to the nonlinear problem \eqref{Linearized} through the following coupled integral system:
\begin{equation}
\left\{
\begin{aligned}
&f_{\ast }=\eps\mathbb{G}_{\ast }^{t}f_{0}+\int_{0}^{t}\mathbb{G}_{\ast
}^{t-\tau }\Gamma \left( f_{\ast }+f_{\sharp },f_{\ast }+f_{\sharp }\right)
\left( \tau \right) d\tau \text{,}\\
&f_{\sharp }=\eps\mathbb{G}_{\sharp }^{t}f_{0}+\int_{0}^{t}\mathbb{G}_{\sharp
}^{t-\tau }\Gamma \left( f_{\ast }+f_{\sharp },f_{\ast }+f_{\sharp }\right)
\left( \tau \right) d\tau \text{.}
\end{aligned}
\right. \notag
\end{equation}

For $f_{\ast}$, the loss of velocity weight is intrinsic; we account for this by sacrificing some velocity weight and applying interpolation techniques to obtain the desired structure. In the case of $f_{\sharp}$, no velocity weight is lost, and we carry out the nonlinear wave interaction to reveal an explicit space-time structure, where the scale separation property of $\mathbb{G}^t_{\sharp}$ plays a crucial role.

To close the nonlinear problem, we propose a polynomial-type ansatz to facilitate estimates. However, since the initial data has compact spatial support and the fluid part should propagate at a finite speed, we expect the solution to exhibit faster decay outside the sound wave cone. We further employ a weighted energy method to improve the spatial asymptotic behavior of the solution.

\subsection{Main result}
Finally, we state the main theorem of this paper, which is  a consequence of Theorem \ref{thm:main} and Theorem \ref{thm:space-asy}, as below:
\begin{theorem}
Let $0\leq \gamma <1$, $\beta >3/2+\gamma $ and $\theta
_{0}\geq 42-34\gamma$. Assume that  $f_{0}$ is compactly
supported in the variable $x$ for all $\xi $, namely,
\begin{equation*}
f_{0}\left( x,\xi \right) \equiv 0\text{ for }\left\vert x\right\vert \geq 1%
\text{, }\xi \in \mathbb{R}^{3}\text{,}
\end{equation*}%
and $\displaystyle f_{0}\left(
x,\cdot \right) \in L_{\xi ,\hat{\beta}}^{\infty }$, where $\hat{\beta}=\beta +\theta _{0}$. For any fixed $\de>0$, there exists a sufficiently small $\eps>0$ such that the solution $f$ of the
nonlinear problem $(\ref{Linearized})$ satisfies
\begin{align*}
\left\vert f\right\vert _{L_{\xi ,\beta }^{\infty }}& \lesssim \eps\left[
\Phi_{m}(x,t)+(1+t)^{\frac{1}{2}}(1+t+|x|)^{-\frac{\theta _{0}}{1-\gamma }}\right] \\
& \quad +\eps^{2}\mathbf{1}_{\{ \left \vert x\right \vert \leq (%
\mathbf{c}+\de)t\}}\left[ \left( 1+t\right) ^{-2}B_{3/2}(|x|,t)+\left(
1+t\right) ^{-\frac{5}{2}}B_{1}(|x|-\mathbf{c}t,t)\right] \text{\thinspace ,}
\end{align*}%
where $\Phi_{m}$ and $B_k$ are  defined in \eqref{fluid-structure}, and $m$ can be any large positive number.  Here, $\mathbf{c}=\sqrt{5/3}$ is the sound speed associated with the global Maxwellian $\mathcal{M}$.
\end{theorem}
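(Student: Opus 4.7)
The statement is a synthesis of the two theorems cited at the end (Theorem~\ref{thm:main} for the behaviour inside the sound cone and Theorem~\ref{thm:space-asy} for the improved tail outside), so the plan is to set up the particle--fluid coupled system and then close it with an ansatz that encodes exactly the right hand side. First, I would fix an integer $N$ large enough (depending on $\gamma$, $\beta$ and $\theta_0$) so that the Enhanced Mixture Lemma produces the velocity decay and spatial decay required, and define $\mathbb{G}^t_\ast=\sum_{k=0}^N h^{(k)}$ and $\mathbb{G}^t_\sharp=\mathbb{G}^t-\mathbb{G}^t_\ast$. Applying $\mathbb{G}^t_\ast$ and $\mathbb{G}^t_\sharp$ to the Duhamel form of \eqref{Linearized} produces the coupled integral system for $(f_\ast,f_\sharp)$ displayed in the introduction. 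The heuristic is that $f_\ast$ inherits the singularity of $\varepsilon f_0$ and is supported near the ballistic trajectories, with space--time exponential decay but a loss of velocity weight $\theta_0$, while $f_\sharp$ is regular, decays rapidly in $\xi$, and carries the fluid-type Huygens/diffusive structure $\Phi_m+B_{3/2}+B_1$.

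Next, I would introduce the two-scale ansatz
\[
\mathcal{A}_\ast(t,x,\xi)=C_\ast\varepsilon(1+t)^{1/2}(1+t+|x|)^{-\theta_0/(1-\gamma)}\langle\xi\rangle^{-\beta},
\]
\[
\mathcal{A}_\sharp(t,x,\xi)=C_\sharp\bigl[\varepsilon\Phi_m(x,t)+\varepsilon^2\mathbf{1}_{\{|x|\le(\mathbf{c}+\delta)t\}}\bigl((1+t)^{-2}B_{3/2}(|x|,t)+(1+t)^{-5/2}B_1(|x|-\mathbf{c}t,t)\bigr)\bigr]\langle\xi\rangle^{-\beta},
\]
and run a Picard iteration in the Banach space defined by these bounds. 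The particle estimate uses part (2) of the Enhanced Mixture Lemma: given a source $\Gamma(f,f)$ whose $L^\infty_{x,\xi}$ norm is controlled by $\mathcal{A}_\ast^2+\mathcal{A}_\sharp^2$, $\mathbb{G}^t_\ast$ returns a pointwise bound of the form $e^{-c\nu(\xi)(t-\tau)}(1+|x|)^{-M}(1+|\xi|)^{-P}$, and after integrating in $\tau$ and trading $M$ against velocity weight by means of the interpolation $\langle\xi\rangle^{-\beta}\le\langle\xi\rangle^{-\hat\beta}\langle\xi\rangle^{\theta_0}$ together with $\langle\xi\rangle^{\theta_0}e^{-c\nu(\xi)t}\lesssim t^{-\theta_0/\gamma}$ (or a comparable estimate absorbing the velocity weight into spatial weight via $t+|x|\gtrsim t+|\xi|t$), one recovers the $(1+t+|x|)^{-\theta_0/(1-\gamma)}$ factor. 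The fluid estimate uses the scale-separation property of $\mathbb{G}^t_\sharp$ (Theorem~\ref{PF-estimate}): the gain of $(1+t)^{-1/2}$ on microscopic data lets us absorb a factor of $\langle t\rangle^{-1/2}$ in each iteration, and the standard wave pattern convolution $\Phi_m\ast_{x,t}\Phi_m$, $\Phi_m\ast_{x,t}B_k$, $B_k\ast_{x,t}B_k$ can be computed by the long/short wave and finite/infinite Mach region splits that have appeared in \cite{[LinWangWu3],[LiuYu]}. The contributions of $\Gamma(f_\ast,f_\ast)$ are quadratic in $\mathcal{A}_\ast$ and are handled by the same ballistic-plus-interpolation scheme.

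After this, the nonlinear closure gives the decay advertised, but inside the sound cone the bound still contains the $B_{3/2}$ and $B_1$ profiles; outside we only have the universal term $\varepsilon(1+t)^{1/2}(1+t+|x|)^{-\theta_0/(1-\gamma)}$. To sharpen the exterior region I would invoke Theorem~\ref{thm:space-asy}, whose proof runs a weighted $L^2$--$L^\infty$ energy estimate with spatial weight $(1+|x|-(\mathbf{c}+\delta)t)_+^{\theta_0/(1-\gamma)}$. Since the initial data have spatial support in $|x|\le1$, the weight is initially bounded, and the transport term $\xi\cdot\nabla_x$ applied to the weight contributes a good sign as long as $|\xi|\le\mathbf{c}+\delta$; for $|\xi|\ge\mathbf{c}+\delta$ one pays $\langle\xi\rangle^{\theta_0}$, which is exactly why the hypothesis $\hat\beta=\beta+\theta_0$ and the loss $\theta_0\ge 42-34\gamma$ are needed to absorb the extra weight through $\nu(\xi)$. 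Combining this exterior bound with the interior particle--fluid estimate through a partition of unity in $|x|-\mathbf{c}t$ completes the proof.

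\textbf{Main obstacle.} The hard part is the circular coupling between $f_\ast$ and $f_\sharp$ together with the requirement that the loss of velocity weight in $f_\ast$ at each nonlinear step does not accumulate. The ansatz decay rates $\theta_0/(1-\gamma)$ and the thresholds $\beta>3/2+\gamma$, $\theta_0\ge 42-34\gamma$ are dictated precisely by the need to absorb (i) the gain of a $\langle\xi\rangle^\gamma$ in the bilinear operator $\Gamma$, (ii) the interpolation factor $\langle\xi\rangle^{\theta_0}$ each time part (2) of the Enhanced Mixture Lemma is used, and (iii) the weight lost to $\xi\cdot\nabla_x$ on the exterior spatial weight; checking that these three accountings are simultaneously consistent in the iteration is the delicate bookkeeping, and is where the Enhanced Mixture Lemma (in particular the freedom to choose $N$) and the scale-separation of $\mathbb{G}^t_\sharp$ are essential.
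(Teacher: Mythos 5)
The overall strategy you outline — particle--fluid decomposition via $\mathbb{G}^t_\ast$ and $\mathbb{G}^t_\sharp$, a coupled integral system, Picard iteration with an ansatz, and a weighted energy estimate for the exterior region — is the same as the paper's, and your reading of the statement as a combination of Theorem~\ref{thm:main} (interior) and Theorem~\ref{thm:space-asy} (exterior) is correct. However, the iteration you set up would not close, because your ansatz for the particle part does not match what $\mathbb{G}^t_\ast$ actually produces.

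Concretely, you propose $\mathcal{A}_\ast=C_\ast\varepsilon(1+t)^{1/2}(1+t+|x|)^{-\theta_0/(1-\gamma)}\langle\xi\rangle^{-\beta}$, which grows like $(1+t)^{1/2}$ in time. But $\mathbb{G}^t_\ast$, as established in Theorem~\ref{PF-estimate}, produces exponential decay in time: $|\mathbb{G}^t_\ast h_0|_{L^\infty_{\xi,Q-\theta}}\lesssim e^{-ct}\langle x\rangle^{-\theta/(1-\gamma)}\|h_0\|$. The paper's ansatz for $f_\ast$ is correspondingly $\varepsilon e^{-ct}\langle x\rangle^{-\theta_0/(1-\gamma)}+\varepsilon^2(1+t)^{-3}B_{3(1-2^{-k})}(|x|,t)+\varepsilon^2(1+t)^{-(4-2^{-k})}B_{2(1-2^{-k})}(|x|-\mathbf{c}t,t)$. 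The $(1+t)^{1/2}(1+t+|x|)^{-\theta_0/(1-\gamma)}$ profile you put into $\mathcal{A}_\ast$ belongs to the \emph{separate exterior weighted energy estimate for the whole solution $f$} (Theorem~\ref{thm:space-asy}), not to the particle iterate; if you feed a time-growing bound into the Duhamel term $\int_0^t\mathbb{G}_\ast^{t-\tau}\Gamma(\cdot,\cdot)\,d\tau$ the iteration will not contract. Your velocity--time trade ``$\langle\xi\rangle^{\theta_0}e^{-c\nu(\xi)t}\lesssim t^{-\theta_0/\gamma}$'' is also not what the paper uses: the spatial decay $(1+|x|)^{-\theta/(1-\gamma)}$ comes from the trade-off $M(1-\gamma)+P=Q$ in the free-transport lemma (spatial weight for velocity weight), not from a time--velocity trade.

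The second, and more decisive, omission is the mechanism that makes the $f_\ast$ iteration close despite the cumulative loss of velocity weight. The paper uses a chained interpolation inequality of the form
\begin{equation*}
|\Gamma(f,f)|_{L^\infty_{\xi,\beta+\theta_1}}\lesssim|f|^{2-2^{-(k-1)}}_{L^\infty_{\xi,\beta}}\,|f|^{2^{-(k-1)}}_{L^\infty_{\xi,\beta+2^k(\theta_1+\gamma)}},
\end{equation*}
combined with the known a priori decay $\|f\|_{L^\infty_{\xi,\beta+2^k(\theta_1+\gamma)}L^\infty_x}\lesssim\varepsilon(1+t)^{-3/2}$. This is precisely what quantizes the exponents $2^{-k}$ in the ansatz and forces the thresholds $\theta_1\ge6(1-\gamma)(1-2^{-k})$, $\theta_0=2^k(\theta_1+\gamma)$, and hence $\theta_0\ge42-34\gamma$ when $k=3$. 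You assert (correctly) that those thresholds ``are dictated precisely'' by the need to absorb velocity weight, but the single-step interpolation $\langle\xi\rangle^{-\beta}\le\langle\xi\rangle^{-\hat\beta}\langle\xi\rangle^{\theta_0}$ that you write down does not produce the $2^{-k}$-type bookkeeping or explain the numerical constant. Without this step the argument is incomplete at exactly the place the paper identifies as the main obstruction. The fluid part and the exterior weighted estimate in your sketch are broadly compatible with the paper's argument, so the gap is localized to the $f_\ast$ iteration.
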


\begin{remark} The estimate of solution includes terms of orders $\varepsilon$ and $\varepsilon^2$. The $\varepsilon$ order terms represent linear waves, such as Huygens, diffusion, and Riesz waves. The $\varepsilon^2$ order terms arise from nonlinear interactions between these basic waves. They consist of polynomial versions of Huygens and diffusion waves, and primarily concentrate inside the sound wave cone. Compared to the linear waves, the nonlinear
waves not only have a smaller magnitude (order $\varepsilon^2$), but also decay faster by a factor $(1+t)^{-1/2}$ than their linear counterparts.
\end{remark}
\begin{remark} Compared to the estimates for hard sphere model (see e.g. \cite{[LinWangWu2]}), the linear waves here have a polynomial sharp estimate, while in the hard-sphere model they are exponentially sharp. This difference arises because, for hard potentials, the discrete spectrum lacks analyticity, unlike the hard-sphere case.  The estimates for the nonlinear part remain the same across both models. The key distinction appears in the spatial asymptotic behavior: for the hard-sphere model, the decay is exponential, while for hard potentials, the decay rate depends on both $\gamma$ and the velocity weight of the initial data.

\end{remark}

\subsection{Organization of the paper}

The rest of this paper is organized as follows: In Section \ref{sec:pre}, we first present the basic properties of the collision operator, review existing linear estimates, and introduce new derivative estimates for the integral operator $K$. In Section \ref{sec:linear}, we prove the Enhanced Mixture Lemma, decompose the solution operator into particle-like and fluid-like parts, and establish their respective estimates. In Section \ref{sec:nonlinear}, building on the linear estimates, we construct a particle-fluid decomposition for the nonlinear solution as well, develop the space-time structure within the sound wave cone through nonlinear wave interactions, and determine the spatial asymptotic behavior outside the sound wave cone using weighted energy estimates.


\section{Preliminaries}\label{sec:pre}

In this preliminary section, we will review some basic properties of the linearized collision operator $L$ and the collision operator $\Gamma$. Afterwards, we introduce several existing estimates for the linearized equation, and finally, we will prove some new derivative estimates for the operator $K$, which is useful in establishing the Enhanced Mixture Lemma.

\subsection{Basic properties of $L$ and $\Gamma$}

The linearized collision operator $L$, which has been analyzed extensively
by Grad \cite{[Grad]}, consists of a multiplication $\nu \left( \xi \right) $
and an integral operator $K=-K_{1}+K_{2}$:
\begin{equation*}
Lf=-\nu \left( \xi \right) f+Kf\text{,}
\end{equation*}%
where%
\begin{equation}
\nu (\xi )=\int_{\mathbb{R}^{3}}\int_{\mathbf{S}^{2}}\mathcal{M}(\xi _{\ast })B(|\xi
_{\ast }-\xi |,\Omega )d\Omega d\xi _{\ast }\text{,}  \label{Colli-freq}
\end{equation}%
\begin{equation*}
K_{1}f=\int_{\mathbb{R}^{3}}k_{1}\left( \xi ,\xi _{\ast }\right) f\left( \xi
_{\ast }\right) d\xi _{\ast }\text{, }
\end{equation*}%
with%
\begin{equation*}
k_{1}\left( \xi ,\xi _{\ast }\right) =\mathcal{M}^{1/2}(\xi )\int_{\mathbb{R}%
^{3}}\mathcal{M}^{1/2}(\xi _{\ast })B(|\xi _{\ast }-\xi |,\Omega )d\Omega \text{,}
\end{equation*}%
and%
\begin{equation*}
K_{2}f=\mathcal{M}^{-1/2}(\xi )\int_{\mathbb{R}^{3}}\int_{\mathbf{S}^{2}}\left( \mathcal{M}(\xi
^{\prime })\mathcal{M}^{1/2}(\xi _{\ast }^{\prime })f(\xi _{\ast }^{\prime })+\mathcal{M}(\xi
_{\ast }^{\prime })\mathcal{M}^{1/2}(\xi ^{\prime })f(\xi ^{\prime })\right) B(|\xi
_{\ast }-\xi |,\Omega )d\Omega d\xi _{\ast }\text{.}
\end{equation*}

In view of (\ref{Colli-freq}),
\begin{equation*}
\nu (\xi )=\left( 2\pi \right) ^{-3/2}\gamma _{0}\int_{\mathbb{R}%
^{3}}\left\vert \xi -\xi _{\ast }\right\vert ^{\gamma }e^{-\frac{\left\vert
\xi _{\ast }\right\vert ^{2}}{2}}d\xi _{\ast }\text{,}\ \ \gamma _{0}=\int_{%
\mathbf{S}^{2}}\beta (\theta )d\Omega \text{.}
\end{equation*}

In this subsection, we present a number of basic properties and estimates of
the operators $L$, $\nu (\xi )$ and $K$, which can be found in \cite{[Grad],
[LinWangWu]}.

\begin{lemma}
\label{lemma-colli-freq} Let $0\leq \gamma< 1$. For any $g\in L_{\sigma
}^{2} $, we have the coercivity of the linearized collision operator $L$,
that is, there exists a positive constant $\nu _{0}$ such that
\begin{equation*}
\left \langle g,Lg\right \rangle _{\xi }\leq -\nu _{0}\left \vert \mathrm{P}%
_{1}g\right \vert _{L_{\sigma }^{2}}^{2}\text{.}  \label{coercivity}
\end{equation*}%
For the multiplicative operator $\nu (\xi )$, there are positive constants $%
\nu _{0}$ and $\nu _{1}$ such that
\begin{equation*}
\nu _{0}\left \langle \xi \right \rangle ^{\gamma }\leq \nu (\xi )\leq \nu
_{1}\left \langle \xi \right \rangle ^{\gamma }\text{,}  \label{nu-gamma}
\end{equation*}%
and for any multi-index $\alpha\in\mathbb{N}^3_0 $,
\begin{equation}  \label{nu-deriv}
\left\vert \partial _{\xi }^{\alpha }\nu \left( \xi \right) \right\vert \leq
C\left( 1+\left\vert \xi \right\vert \right) ^{\gamma -\left\vert \alpha
\right\vert }\text{.}
\end{equation}
For the integral operator $K$,
\begin{equation*}
Kf=-K_{1}f+K_{2}f=\int_{{\mathbb{R}}^{3}}-k_{1}(\xi ,\xi _{\ast })f(\xi
_{\ast })d\xi _{\ast }+\int_{{\mathbb{R}}^{3}}k_{2}(\xi ,\xi _{\ast })f(\xi
_{\ast })d\xi _{\ast }\text{,}
\end{equation*}%
the kernels $k_{1}(\xi ,\xi _{\ast })$ and $k_{2}(\xi ,\xi _{\ast })$
satisfy
\begin{equation*}
k_{1}(\xi ,\xi _{\ast })\lesssim |\xi -\xi _{\ast }|^{\gamma }\exp \left \{ -%
\frac{1}{4}\left( |\xi |^{2}+|\xi _{\ast }|^{2}\right) \right \} \text{,}
\end{equation*}%
and%
\begin{equation*}
k_{2}(\xi ,\xi _{\ast })=a\left( \xi ,\xi _{\ast },\kappa \right) \exp
\left( -\frac{(1-\kappa )}{8}\left[ \frac{\left( \left \vert \xi \right
\vert ^{2}-\left \vert \xi _{\ast }\right \vert ^{2}\right) ^{2}}{\left
\vert \xi -\xi _{\ast }\right \vert ^{2}}+\left \vert \xi -\xi _{\ast
}\right \vert ^{2}\right] \right) \text{,}
\end{equation*}%
for any $0<\kappa <1$, together with
\begin{equation*}
a(\xi ,\xi _{\ast },\kappa )\leq C_{\kappa }|\xi -\xi _{\ast }|^{-1}(1+|\xi
|+|\xi _{\ast }|)^{\gamma -1}\text{.}
\end{equation*}
\end{lemma}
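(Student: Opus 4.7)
The plan is to establish each of the four assertions separately, relying on classical Grad calculus and a standard compactness argument; several pieces are bookkeeping but the kernel identity for $k_2$ is the substantive step.

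For the coercivity of $L$, I would invoke the macro–micro decomposition $g=\mathrm{P}_0g+\mathrm{P}_1g$. Since $\mathrm{Ker}(L)=\mathrm{span}\{\chi_i\}_{i=0}^{4}$ and $L$ is self-adjoint and non-positive on $L^2_\xi$, one has $\langle g,Lg\rangle_\xi=\langle \mathrm{P}_1g,L\mathrm{P}_1g\rangle_\xi$, so the inequality reduces to a spectral gap on $\mathrm{Ker}(L)^\perp$ in the $|\cdot|_{L^2_\sigma}$ norm. Writing $L=-\nu+K$ and using Grad's result that $\nu^{-1/2}K\nu^{-1/2}$ is compact on $L^2_\xi$ for $0\leq\gamma<1$, the Fredholm alternative combined with the absence of zero modes outside $\mathrm{Ker}(L)$ yields a uniform $\nu_0>0$ with $-\langle h,Lh\rangle_\xi\geq \nu_0|h|_{L^2_\sigma}^2$ for $h\in\mathrm{Ker}(L)^\perp$.

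For the two-sided bound on $\nu(\xi)$, a change of variables $\eta=\xi_\ast-\xi$ gives $\nu(\xi)=(2\pi)^{-3/2}\gamma_0\int_{\mathbb{R}^3}|\eta|^\gamma e^{-|\xi+\eta|^2/2}d\eta$. The upper bound follows from $|\eta|^\gamma\lesssim \langle\xi\rangle^\gamma+|\xi+\eta|^\gamma$ together with Gaussian integrability. For the lower bound, for $|\xi|\leq R$ the function $\nu$ is continuous and strictly positive, hence $\gtrsim1\gtrsim\langle\xi\rangle^\gamma$; for $|\xi|>R$, restricting the original integral to $|\xi_\ast|\leq|\xi|/2$ forces $|\xi-\xi_\ast|\geq|\xi|/2$, giving $\nu(\xi)\gtrsim|\xi|^\gamma$. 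For the derivative bound \eqref{nu-deriv}, I differentiate the Gaussian-side variable under the integral to produce an integrand $|\eta|^\gamma P_\alpha(\xi+\eta)e^{-|\xi+\eta|^2/2}$ with $P_\alpha$ a polynomial of degree $|\alpha|$; splitting the $\eta$-integral into $|\eta|\lesssim\langle\xi\rangle$ and $|\eta|\gtrsim\langle\xi\rangle$ and exploiting the concentration of the Gaussian near $\eta=-\xi$, one extracts the polynomial factor at $\eta\approx-\xi$ and obtains an integral of size $\langle\xi\rangle^{-|\alpha|}\nu(\xi)\lesssim\langle\xi\rangle^{\gamma-|\alpha|}$ for large $|\xi|$; for bounded $\xi$ the derivatives are clearly finite, so $(1+|\xi|)^{\gamma-|\alpha|}$ is a uniform upper bound.

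For the kernel estimates, the bound on $k_1$ is essentially explicit: performing the $\Omega$-integration yields $k_1(\xi,\xi_\ast)=\gamma_0\mathcal{M}^{1/2}(\xi)\mathcal{M}^{1/2}(\xi_\ast)|\xi-\xi_\ast|^\gamma$, and the identity $\mathcal{M}^{1/2}(\xi)\mathcal{M}^{1/2}(\xi_\ast)=(2\pi)^{-3/2}\exp\{-(|\xi|^2+|\xi_\ast|^2)/4\}$ gives the claim. The harder piece is $k_2$: starting from the gain term and applying Grad's change of variables $(\xi_\ast,\Omega)\mapsto(\xi',\xi'_\ast)$, one uses the collision identities $\xi+\xi_\ast=\xi'+\xi'_\ast$, $|\xi|^2+|\xi_\ast|^2=|\xi'|^2+|\xi'_\ast|^2$ and the orthogonality relation
\[
|\xi'-\xi|^2+|\xi'_\ast-\xi|^2=|\xi-\xi_\ast|^2,\qquad (\xi'-\xi)\cdot(\xi'_\ast-\xi)=0,
\]
to rewrite $\mathcal{M}^{-1/2}(\xi)\mathcal{M}(\xi')\mathcal{M}^{1/2}(\xi'_\ast)$ (and its symmetric counterpart) so that the Gaussian exponent reorganizes into the sum $\tfrac{(|\xi|^2-|\xi_\ast|^2)^2}{|\xi-\xi_\ast|^2}+|\xi-\xi_\ast|^2$; the Jacobian of the change of variables yields the $|\xi-\xi_\ast|^{-1}$ factor. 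The parameter $\kappa\in(0,1)$ is introduced by keeping a fraction $1-\kappa$ of the Gaussian mass in the exponent and absorbing the remaining $\kappa$-fraction into the prefactor $a(\xi,\xi_\ast,\kappa)$; bounding that remainder then produces the stated $(1+|\xi|+|\xi_\ast|)^{\gamma-1}$ factor.

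The main obstacle is the $k_2$ estimate, where one must carefully track the post-collisional parametrization, the Jacobian, and the redistribution of Gaussian weight that produces the tunable $\kappa$; the other pieces are straightforward applications of Gaussian calculus and Grad's classical compactness-plus-null-space analysis.
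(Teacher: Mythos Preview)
The paper does not prove this lemma; it is stated as a collection of classical facts with citations to Grad and to \cite{[LinWangWu]}. Your outline for the coercivity (compactness of $\nu^{-1/2}K\nu^{-1/2}$ plus the identified null space), the two-sided bound $\nu(\xi)\sim\langle\xi\rangle^\gamma$, and the $k_1$ estimate is the standard one and is fine. For $k_2$ your sketch is morally right but the ``change of variables $(\xi_\ast,\Omega)\mapsto(\xi',\xi'_\ast)$'' is dimensionally off: one parametrizes by $V_\parallel=\xi'-\xi\in\mathbb{R}^3$ and integrates over the two-dimensional plane $V_\perp\perp V_\parallel$, exactly the Hilbert--Carleman representation the paper carries out in detail in Section~2.3.2. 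The exponent identity and the Jacobian factor $|V_\parallel|^{-1}=|\xi-\xi_\ast|^{-1}$ you mention are then correct.

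There is a genuine gap in your argument for \eqref{nu-deriv}. After substituting $\eta=\xi_\ast-\xi$ and differentiating the Gaussian you get $\int|\eta|^\gamma H_\alpha(\xi+\eta)e^{-|\xi+\eta|^2/2}\,d\eta$, with $H_\alpha$ a Hermite-type polynomial of degree $|\alpha|$. A pointwise bound ``near $\eta\approx-\xi$'' gives $H_\alpha(\xi+\eta)=O(1)$ and $|\eta|^\gamma\sim|\xi|^\gamma$, which only yields $|\partial_\xi^\alpha\nu|\lesssim\langle\xi\rangle^\gamma$, not $\langle\xi\rangle^{\gamma-|\alpha|}$. The extra $\langle\xi\rangle^{-|\alpha|}$ does not come from concentration; it comes from cancellation: after the change $w=\xi+\eta$ the integral is $\int|w-\xi|^\gamma H_\alpha(w)e^{-|w|^2/2}\,dw$, and one must Taylor-expand $|w-\xi|^\gamma=|\xi|^\gamma(1-2w\cdot\xi/|\xi|^2+|w|^2/|\xi|^2)^{\gamma/2}$ and use that $H_\alpha(w)e^{-|w|^2/2}$ annihilates all polynomials in $w$ of degree $<|\alpha|$. (The alternative of differentiating $|\xi-\xi_\ast|^\gamma$ directly gives the right decay but produces a non-integrable local singularity $|\xi-\xi_\ast|^{\gamma-|\alpha|}$ once $|\alpha|>\gamma+3$, so for general $\alpha$ the cancellation argument is unavoidable.)
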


\begin{proposition}
Let $\tau \in \mathbb{R}$. Then
\begin{equation}
\int_{\mathbb{R}^{3}}\left\vert k\left( \xi ,\eta \right) \right\vert
\left\langle \eta \right\rangle ^{\tau }d\eta \lesssim \left\langle \xi
\right\rangle ^{\tau +\gamma -2}\,\text{,}\quad \int_{\mathbb{R}%
^{3}}\left\vert k\left( \xi ,\eta \right) \right\vert \left\langle \xi
\right\rangle ^{\tau }d\xi \lesssim \left\langle \eta \right\rangle ^{\tau
+\gamma -2}\text{.} \notag
\end{equation}%
Consequently, we have
\begin{equation}
|Kg|_{L_{\xi ,\tau +2-\gamma }^{q}}\lesssim |g|_{L_{\xi ,\tau }^{q}}\text{, }%
1\leq q\leq \infty \text{.} \notag
\end{equation}
\end{proposition}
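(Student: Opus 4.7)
\medskip
\noindent\textbf{Proof proposal.} My plan is to decompose $K=-K_{1}+K_{2}$, establish the two kernel integral estimates separately, and then deduce the $L^{q}$ mapping property by Schur's test. The $k_{1}$ piece is immediate from Lemma~\ref{lemma-colli-freq}: the Gaussian factor $e^{-(|\xi|^{2}+|\eta|^{2})/4}$ gives
\[
\int_{\mathbb{R}^{3}}|k_{1}(\xi,\eta)|\langle\eta\rangle^{\tau}\,d\eta\lesssim e^{-|\xi|^{2}/4}\int_{\mathbb{R}^{3}}|\xi-\eta|^{\gamma}e^{-|\eta|^{2}/4}\langle\eta\rangle^{\tau}\,d\eta\lesssim e^{-|\xi|^{2}/8},
\]
which is Schwartz in $\xi$ and therefore dominated by $\langle\xi\rangle^{\tau+\gamma-2}$ for every $\tau\in\mathbb{R}$. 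The companion estimate in $\xi$ is identical because the bound on $k_{1}$ is symmetric in $(\xi,\eta)$.

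The core step is the $k_{2}$ piece. Here I would split the $\eta$-integration into a \emph{far regime} $\{|\eta|\leq|\xi|/2\}\cup\{|\eta|\geq 2|\xi|\}$ and a \emph{near regime} $\{|\xi|/2\leq|\eta|\leq 2|\xi|\}$. In the far regime one has $\bigl||\xi|^{2}-|\eta|^{2}\bigr|\gtrsim\max(|\xi|,|\eta|)^{2}$ and $|\xi-\eta|\gtrsim\max(|\xi|,|\eta|)$, so the exponential factors in the kernel bound of Lemma~\ref{lemma-colli-freq} produce decay at least $\exp(-c(|\xi|^{2}+|\eta|^{2}))$, which is trivially absorbed into $\langle\xi\rangle^{\tau+\gamma-2}$ irrespective of the sign of $\tau$. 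In the near regime, $\langle\eta\rangle\sim\langle\xi\rangle$ lets me pull $\langle\eta\rangle^{\tau}\sim\langle\xi\rangle^{\tau}$ outside, reducing matters to the unweighted bound $\int|k_{2}(\xi,\eta)|\,d\eta\lesssim\langle\xi\rangle^{\gamma-2}$.

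To establish this last bound I would change variables to $u=\eta-\xi$ in spherical coordinates $u=r\omega$, with $r\geq 0$ and $\omega\in S^{2}$. Then $|\xi-\eta|^{2}=r^{2}$ and $|\xi|^{2}-|\eta|^{2}=-r(2\xi\cdot\omega+r)$, so the exponent becomes $-c\bigl[(2\xi\cdot\omega+r)^{2}+r^{2}\bigr]$. After substituting $d\eta=r^{2}\,dr\,d\omega$ and using the near-regime prefactor $r^{-1}\langle\xi\rangle^{\gamma-1}$, the angular integral $\int_{S^{2}}e^{-c(2\xi\cdot\omega+r)^{2}}\,d\omega$ evaluates to $O(|\xi|^{-1})$: setting $\mu=\cos\theta$ with $\theta$ the angle between $\omega$ and $\hat{\xi}$ reduces it to $\tfrac{2\pi}{2|\xi|}\int_{-2|\xi|+r}^{2|\xi|+r}e^{-cs^{2}}\,ds\lesssim |\xi|^{-1}$. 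The remaining $\int_{0}^{\infty}r\,e^{-cr^{2}}\,dr$ is $O(1)$, delivering the overall factor $\langle\xi\rangle^{\gamma-1}\cdot\langle\xi\rangle^{-1}=\langle\xi\rangle^{\gamma-2}$; the bounded region $|\xi|\lesssim 1$ is handled separately since the volume of the near regime is $O(|\xi|^{3})$ and all other factors are $O(1)$. The symmetric estimate $\int|k_{2}(\xi,\eta)|\langle\xi\rangle^{\tau}\,d\xi\lesssim\langle\eta\rangle^{\tau+\gamma-2}$ follows by the same argument after interchanging $\xi\leftrightarrow\eta$, since the bound on $k_{2}$ is manifestly symmetric in its two arguments.

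Finally, to deduce the $L^{q}$ statement I would apply Schur's test to the rescaled kernel $k'(\xi,\eta):=\langle\xi\rangle^{\tau+2-\gamma}k(\xi,\eta)\langle\eta\rangle^{-\tau}$. Applying the first kernel estimate with weight $-\tau$ gives $\sup_{\xi}\int|k'(\xi,\eta)|\,d\eta\lesssim 1$, while applying the second with weight $\tau+2-\gamma$ gives $\sup_{\eta}\int|k'(\xi,\eta)|\,d\xi\lesssim 1$; Schur's lemma then yields boundedness of the integral operator with kernel $k'$ on $L_{\xi}^{q}$ for every $1\leq q\leq\infty$, which is precisely $|Kg|_{L_{\xi,\tau+2-\gamma}^{q}}\lesssim|g|_{L_{\xi,\tau}^{q}}$. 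The main obstacle, as is typical for Grad-type kernel estimates, is the near-regime calculation for $k_{2}$: the interplay between the singular prefactor $|\xi-\eta|^{-1}$ and the anisotropic Gaussian concentrating on the sphere $|\eta|=|\xi|$ must be tracked precisely to recover the exponent $\gamma-2$ with no loss.
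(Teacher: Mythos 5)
Your argument is correct and is, in substance, the standard Grad--Caflisch derivation; the paper does not prove this proposition but cites it to \cite{[Grad],[LinWangWu]}, and the underlying computation there is exactly yours (near/far split by the annulus $|\xi|/2\le|\eta|\le 2|\xi|$, spherical coordinates with the anisotropic Gaussian concentrating on $|\eta|=|\xi|$, then Schur's test). One small economy worth noting: your near-regime radial-angular integral is precisely the content of the paper's Lemma~\ref{lem-Caflisch} (Caflisch, Proposition~5.3) applied with $\varsigma=-1$, so after replacing $(1+|\xi|+|\eta|)^{\gamma-1}$ by $\langle\xi\rangle^{\gamma-1}$ (which, since $\gamma-1\le 0$, is in fact a pointwise upper bound valid on all of $\mathbb{R}^3$, not just on the annulus) you could simply invoke that lemma rather than re-deriving the $O(\langle\xi\rangle^{-1})$ angular bound; this also sidesteps the separate treatment of small $|\xi|$ since the lemma already gives $(1+|\xi|)^{-1}$ uniformly. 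Everything else --- the Schwartz bound for $k_1$, the super-exponential decay in the far regime, the symmetry of the kernel bounds yielding the companion $\xi$-integral, and the Schur-test step --- is sound.
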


For the nonlinear operator $\Gamma$, one has

\begin{lemma}[\cite{[Ukai-Yang]}]
\label{basic-Gamma}  Let $0\leq \gamma<1$ and $\tau \geq 0$%
. Then
\begin{equation*}
\left| \nu ^{-1}(\xi)\Gamma (h_{1},h_{2})\right| _{L_{\xi ,\tau }^{\infty
}}\lesssim \left| h_{1}\right|_{L_{\xi ,\tau }^{\infty }}\left|
h_{2}\right|_{L_{\xi ,\tau }^{\infty }}\text{.}
\end{equation*}
\end{lemma}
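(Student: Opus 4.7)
The plan is to split $\Gamma(h_1,h_2) = \mathcal{M}^{-1/2}Q(\mathcal{M}^{1/2}h_1,\mathcal{M}^{1/2}h_2)$ into its loss and gain parts and estimate each by brute force, exploiting (i) the cancellation $\mathcal{M}^{-1/2}(\xi)\mathcal{M}^{1/2}(\xi)=1$ in the loss, and (ii) the conservation identity $\mathcal{M}^{1/2}(\xi')\mathcal{M}^{1/2}(\xi_*')=\mathcal{M}^{1/2}(\xi)\mathcal{M}^{1/2}(\xi_*)$ together with $|\xi'|^2+|\xi_*'|^2=|\xi|^2+|\xi_*|^2$ in the gain. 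In both cases, using only $B(|\xi-\xi_*|,\Omega)\lesssim |\xi-\xi_*|^\gamma$ (by the Grad cutoff) and the pointwise definition of the weighted $L^\infty$ norm, the remaining velocity integral will be bounded by $\nu(\xi)\sim\langle\xi\rangle^\gamma$, which is exactly what is needed to close the estimate after multiplying by $\nu^{-1}(\xi)\langle\xi\rangle^\tau$.

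For the loss part, the cancellation reduces the contribution to $\tfrac12\int B\,[h_2(\xi)\mathcal{M}^{1/2}(\xi_*)h_1(\xi_*)+h_1(\xi)\mathcal{M}^{1/2}(\xi_*)h_2(\xi_*)]d\xi_* d\Omega$. Using $|h_i(\xi)|\leq\langle\xi\rangle^{-\tau}|h_i|_{L^\infty_{\xi,\tau}}$ and $\tau\geq 0$, I bound the integrand and integrate against $|\xi-\xi_*|^\gamma\mathcal{M}^{1/2}(\xi_*)$. Since $\mathcal{M}^{1/2}$ has Gaussian decay, a standard change of variables gives $\int|\xi-\xi_*|^\gamma\mathcal{M}^{1/2}(\xi_*)d\xi_*\lesssim\langle\xi\rangle^\gamma\sim\nu(\xi)$, so after multiplying by $\langle\xi\rangle^\tau\nu^{-1}(\xi)$ I obtain the desired uniform bound.

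For the gain part, after applying the Boltzmann identity $\mathcal{M}^{-1/2}(\xi)\mathcal{M}^{1/2}(\xi')\mathcal{M}^{1/2}(\xi_*')=\mathcal{M}^{1/2}(\xi_*)$, the contribution becomes $\tfrac12\int B\,\mathcal{M}^{1/2}(\xi_*)[h_1(\xi_*')h_2(\xi')+h_2(\xi_*')h_1(\xi')]d\xi_* d\Omega$. The key observation is that energy conservation gives $\langle\xi'\rangle^2\langle\xi_*'\rangle^2 \geq 1+|\xi'|^2+|\xi_*'|^2 = 1+|\xi|^2+|\xi_*|^2\geq\langle\xi\rangle^2$, hence for $\tau\geq 0$
\[
\langle\xi'\rangle^{-\tau}\langle\xi_*'\rangle^{-\tau}\leq\langle\xi\rangle^{-\tau}.
\]
Applying this to $|h_1(\xi_*')h_2(\xi')|\leq\langle\xi_*'\rangle^{-\tau}\langle\xi'\rangle^{-\tau}|h_1|_{L^\infty_{\xi,\tau}}|h_2|_{L^\infty_{\xi,\tau}}$ pulls out the factor $\langle\xi\rangle^{-\tau}$ uniformly in $\Omega$ and $\xi_*$, and the remaining integral $\int|\xi-\xi_*|^\gamma\mathcal{M}^{1/2}(\xi_*)d\xi_*\lesssim\nu(\xi)$ is the same one as in the loss step.

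The main obstacle (mild, but the crux of the argument) is the gain term: one must simultaneously balance the Maxwellian from the collision invariant with the polynomial weight on $h_1$ and $h_2$ evaluated at the post-collisional velocities $\xi',\xi_*'$. This is resolved by the product inequality $\langle\xi'\rangle\langle\xi_*'\rangle\geq\langle\xi\rangle$ from energy conservation, which lets the polynomial weights on the primed variables dominate the $\langle\xi\rangle^{-\tau}$ factor without sacrificing any Maxwellian exponent. Once this is in place, the remaining work is the elementary integral bound for $\int|\xi-\xi_*|^\gamma\mathcal{M}^{1/2}(\xi_*)d\xi_*$, and the conclusion follows by combining gain and loss and dividing by $\nu(\xi)\sim\langle\xi\rangle^\gamma$.
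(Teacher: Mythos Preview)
Your argument is correct and is precisely the standard proof of this estimate: split into loss and gain, use the Maxwellian cancellation $\mathcal{M}^{-1/2}(\xi)\mathcal{M}^{1/2}(\xi')\mathcal{M}^{1/2}(\xi_*')=\mathcal{M}^{1/2}(\xi_*)$ from energy conservation on the gain term, apply the weight inequality $\langle\xi'\rangle\langle\xi_*'\rangle\geq\langle\xi\rangle$ (also from energy conservation) to transfer the polynomial weight, and finish with $\int|\xi-\xi_*|^\gamma\mathcal{M}^{1/2}(\xi_*)\,d\xi_*\lesssim\langle\xi\rangle^\gamma\sim\nu(\xi)$.

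The paper itself does not give a proof of this lemma; it is simply quoted from \cite{[Ukai-Yang]}, and your argument is essentially the one found there. There is nothing to compare or correct.
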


\subsection{Review of the linearized Boltzmann equation}\label{sec:2.2}

In this subsection, we consider the linearized Boltzmann equation
\begin{equation}\label{eq:linear0}
\partial _{t}h+\xi \cdot \nabla _{x}h=Lh,\quad \left. h\right\vert
_{t=0}=h_{0}\text{.}
\end{equation}%
We will review some basic properties of the solution. To analyze the precise
structure of the solution, the plan is to use spectral information to
decompose the linearized Boltzmann equation into two parts: fluid part and
non-fluid part. According to the semigroup theory, the solution of the
linearized Boltzmann equation $h$ can be represented by
\begin{equation*}
h\left( t,x,\xi \right) =\mathbb{G}^{t}h_{0}=\left( 2\pi \right) ^{-3}\int_{%
\mathbb{R}^{3}}e^{ix\cdot \eta }e^{\left( -i\xi \cdot \eta +L\right) t}%
\hat{h}_{0}\left( \eta \right) d\eta \text{,}
\end{equation*}%
where $\hat{h}_{0}$ is the Fourier transformation of $h_{0}$ with
respect to the space variable $x$. Based on the spectrum analysis of the
operator $-i\xi \cdot \eta +L$, the semigroup $e^{\left( -i\xi \cdot \eta
+L\right) t}$ can be decomposed as
\begin{align*}
e^{\left( -i\xi \cdot \eta +L\right) t} &=\chi _{\delta }\left( \eta
\right) \sum_{j=0}^{4}e^{\lambda _{j}\left( \eta \right) t}\left\vert
e_{j}\left( \eta \right) \big>\big<e_{j}\left( \eta \right) \right\vert
+\chi _{\delta }\left( \eta \right) e^{\left( -i\xi \cdot \eta +L\right)
t}\Pi _{\eta }^{\bot }+\left( 1-\chi _{\delta }\left( \eta \right) \right)
e^{\left( -i\xi \cdot \eta +L\right) t}%
\vspace {3mm}
\\
&=:\hat{\mathbb{G}}_{L;0}\left( \eta ,t\right) +\hat{\mathbb{G}}%
_{L;\bot }\left( \eta ,t\right) +\hat{\mathbb{G}}_{S}\left( \eta
,t\right)
\end{align*}%
where $\chi _{\delta }\left( \eta \right) $ is a smooth cutoff function with
$0\leq \chi _{\delta }\leq 1$, $\chi _{\delta }\left( \eta \right) =1$ for $%
\left\vert \eta \right\vert \leq \frac{\delta }{2}$ and $0$ for $\left\vert
\eta \right\vert \geq \delta $, for $\delta >0$ sufficient small. Note that
the spectrums of $\hat{\mathbb{G}}_{L;\bot }\left( \eta ,t\right) $ and $%
\hat{\mathbb{G}}_{S}\left( \eta ,t\right) $ are strictly away from the
imaginary axis (with negative real part). Moreover for $\left\vert \eta
\right\vert \ll 1$, the spectrum of the operator $-i\xi \cdot \eta +L$
consists of exactly five eigenvalues $\lambda _{j}\left( \eta \right) $ ($%
0\leq j\leq 4$) associated with the corresponding eigenfunctions $%
e_{j}\left( \eta \right) $ (\cite{[EP], [LiuYu1], [YangYu]}):
\begin{equation*}
\begin{array}{l}
\displaystyle\lambda _{j}(\eta )=-i\,a_{j}|\eta |-A_{j}|\eta |^{2}+O(|\eta
|^{3})\text{,} \\
\\
\displaystyle e_{j}(\eta )=E_{j}+O(|\eta |)\text{,}%
\end{array}%
\end{equation*}%
here $A_{j}>0$, $\left\langle e_{j}(-\eta ),e_{l}(\eta )\right\rangle _{\xi
}=\delta _{jl}$, $0\leq j$, $l\leq 4$ and
\begin{equation}
\left\{
\begin{array}{l}
\mathrm{P}_{0}\left( \xi \cdot \om\right) \mathrm{P}_{0}E_{j}=\mathrm{P}%
_{0}\left( \xi \cdot \om\right) E_{j}=a_{j}E_{j}\text{, } \\[2mm]
a_{0}=\sqrt{\frac{5}{3}}\text{,}\quad a_{1}=-\sqrt{\frac{5}{3}}\text{,}%
\,\quad a_{2}=a_{3}=a_{4}=0\text{,} \\
E_{0}=\sqrt{\frac{3}{10}}\chi _{0}+\sqrt{\frac{1}{2}}\om\cdot \overline{\chi
}+\sqrt{\frac{1}{5}}\chi _{4}\text{,} \\[2mm]
E_{1}=\sqrt{\frac{3}{10}}\chi _{0}-\sqrt{\frac{1}{2}}\om\cdot \overline{\chi
}+\sqrt{\frac{1}{5}}\chi _{4}\text{,} \\[2mm]
E_{2}=-\sqrt{\frac{2}{5}}\chi _{0}+\sqrt{\frac{3}{5}}\chi _{4}\,\text{,} \\%
[2mm]
E_{3}=\om_{1}\cdot \overline{\chi }\text{,} \\[2mm]
E_{4}=\om_{2}\cdot \overline{\chi }\text{,}%
\end{array}%
\right.  \label{Eigen function}
\end{equation}%
where $\overline{\chi }=(\chi _{1},\chi _{2},\chi _{3})$, $\eta =\left\vert
\eta \right\vert \om$ ($\om\in S^{2}$) and $\{\om_{1},\om_{2},\om\}$ is an
orthonormal basis of ${\mathbb{R}}^{3}$. And then we define
\begin{equation*}
h_{L;0}=\mathbb{G}_{L;0}^{t}h_{0}=\left( 2\pi \right) ^{-3}\int_{\mathbb{R}%
^{3}}e^{ix\cdot \eta }\hat{\mathbb{G}}_{L;0}\left( \eta ,t\right)
\hat{h}_{0}\left( \eta \right) d\eta \text{,}
\end{equation*}%
\begin{equation*}
h_{L;\bot }=\mathbb{G}_{L;\bot }^{t}h_{0}=\left( 2\pi \right) ^{-3}\int_{%
\mathbb{R}^{3}}e^{ix\cdot \eta }\hat{\mathbb{G}}_{L;\bot }\left( \eta
,t\right) \hat{h}_{0}\left( \eta \right) d\eta \text{,}
\end{equation*}%
\begin{equation*}
h_{S}=\mathbb{G}_{S}^{t}h_{0}=\left( 2\pi \right) ^{-3}\int_{\mathbb{R}%
^{3}}e^{ix\cdot \eta }\hat{\mathbb{G}}_{S}\left( \eta ,t\right) \hat{%
h}_{0}\left( \eta \right) d\eta \text{,}
\end{equation*}%
called the fluid part and nonfluid part of the long wave of $h$, and the
short wave of $h$, respectively. In the meanwhile, we define%
\begin{equation*}
h_{0L}:=\left( 2\pi \right) ^{-3}\int_{\mathbb{R}^{3}}e^{ix\cdot \eta }\chi
_{\delta }\left( \eta \right) \hat{h}_{0}\left( \eta \right) d\eta \text{%
,}
\end{equation*}%
\begin{equation*}
h_{0S}:=\left( 2\pi \right) ^{-3}\int_{\mathbb{R}^{3}}e^{ix\cdot \eta
}\left( 1-\chi _{\delta }\left( \eta \right) \right) \hat{h}_{0}\left(
\eta \right) d\eta \text{.}
\end{equation*}%
One can see that the long wave $h_{L}:=h_{L;0}+h_{L;\bot }$ satisfies the
equation%
\begin{equation}
\left\{
\begin{array}{l}
\partial _{t}h_{L}+\xi \cdot \nabla _{x}h_{L}=Lh_{L}\text{,}%
\vspace {3mm}
\\
h_{L}\left( 0,x,\xi \right) =h_{0L}\left( x,\xi \right) \text{,}%
\end{array}%
\right. \notag 
\end{equation}%
and the short wave satisfies%
\begin{equation}
\left\{
\begin{array}{l}
\partial _{t}h_{S}+\xi \cdot \nabla _{x}h_{S}=Lh_{S}\text{,}
\vspace {3mm}
\\
h_{S}\left( 0,x,\xi \right) =h_{0S}\left( x,\xi \right) \text{.}%
\end{array}%
\right. \notag 
\end{equation}%
The pointwise behavior of the leading fluid part is determined by how the
solution depends on the Fourier transformed variable of the spatial variable
$\eta $, i.e., smoothly or analytically. For the three space dimensional
Boltzmann equation with hard sphere \cite{[LiuYu1],[LiuYu2]}, with the
analytic property of the eigenfunction $\la_{j}(|\eta |)$, one has the fluid
structure of the exponential type. However, for the hard potential case $%
0\leq \gamma <1$, the dependence is no more analytic. We apply the
multiplier method and follow the idea from hard sphere to obtain the almost
exponential pointwise estimate in \cite{[Wang-Wu]}.

\begin{proposition} \label{Prop-linear-fluid}Let $h$ be a solution of the
linearized Boltzmann equation \eqref{eq:linear0} with the initial data $h_{0}$. Then for any multi-indices $\alpha,\beta\in\mathbb{N}_0^3$, any $Q\geq 0$
\begin{equation*}
\left\vert \pa_{x}^{\alpha }\pa_{\xi }^{\beta }\mathbb{G}_{L;0}^{t}h_{0}%
\right\vert _{L_{\xi ,Q}^{\infty }}\lesssim \left[ (1+t)^{-\frac{|\alpha |}{2%
}}\Phi_m (t,x)+e^{-ct}\right] \Vert h_{0}\Vert _{L_{x}^{\infty }L_{\xi }^{2}}%
\text{,}
\end{equation*}%
\begin{equation*}
\left\vert \pa_{x}^{\alpha }\pa_{\xi }^{\beta }\mathbb{G}_{L;0}^{t}\mathrm{P}%
_{1}h_{0}\right\vert _{L_{\xi ,Q}^{\infty }}\lesssim \left[ (1+t)^{-\frac{%
|\alpha |+1}{2}}\Phi_m (t,x)+e^{-ct}\right] \Vert h_{0}\Vert _{L_{x}^{\infty
}L_{\xi }^{2}}\text{.}
\end{equation*}%
Here
\begin{equation}\label{fluid-structure}
\Phi_{m} (t,x)=\left[
\begin{array}{l}
\mathbf{1}_{\{\left\vert x\right\vert \leq \mathbf{c}t\}}\left( 1+t\right)
^{-3/2}B_{3/2}\left( \left\vert x\right\vert ,t\right) \\[2mm]
+\left( 1+t\right) ^{-3/2}B_{m}\left( \left\vert x\right\vert ,t\right)
+\left( 1+t\right) ^{-2}B_{m}\left( \left\vert x\right\vert -\mathbf{c}%
t,t\right)%
\end{array}%
\right]
\end{equation}%
for all $m>0$, the number $\mathbf{c=}\sqrt{\frac{5}{3}}$ is the sound
speed, $\mathbf{1}_{D}$ is the characteristic function of the set $D$ and
\begin{equation*}
B_{m}\left( z,t\right) =\left( 1+\frac{z^{2}}{1+t}\right) ^{-m}\text{.}
\end{equation*}
\end{proposition}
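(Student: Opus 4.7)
The plan is to work from the spectral representation of the fluid part of the long wave and to analyze the resulting oscillatory-diffusive integrals via stationary-phase methods combined with the multiplier trick of \cite{[Wang-Wu]}. Write the solution as
\begin{equation*}
\mathbb{G}_{L;0}^{t}h_{0}(x,\xi) = (2\pi)^{-3}\sum_{j=0}^{4}\int_{\mathbb{R}^{3}}e^{ix\cdot\eta}\chi_{\delta}(\eta)\, e^{\lambda_{j}(\eta)t}\langle e_{j}(\eta),\hat{h}_{0}(\eta)\rangle_{\xi}\, e_{j}(\eta)(\xi)\, d\eta,
\end{equation*}
and use the expansions $\lambda_{j}(\eta)=-ia_{j}|\eta|-A_{j}|\eta|^{2}+r_{j}(\eta)$ with $A_{j}>0$ and $r_{j}(\eta)=O(|\eta|^{3})$, together with $e_{j}(\eta)=E_{j}+|\eta|\tilde{e}_{j}(\eta)$. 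On $\mathrm{supp}\,\chi_{\delta}$ these symbols are smooth in $\eta$, and $e_{j}(\eta)(\xi)$ lies in a fixed finite-dimensional space of functions $p(\eta,\xi)\mathcal{M}^{1/2}(\xi)$ with $p$ polynomial in $\xi$.

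Next I would extract the leading wave/diffusion structure by splitting the exponential phase into the pure oscillation $e^{-ia_{j}|\eta|t}$, the Gaussian part $e^{-A_{j}|\eta|^{2}t}$, and the smooth correction $e^{r_{j}(\eta)t}$. For $j=2,3,4$ (where $a_{j}=0$) the resulting integral is essentially a truncated Gaussian; integrating by parts $m$ times in $\eta$ converts each copy of $ix\cdot\eta$ into a factor $1/|x|$ and trades smoothness of $\chi_{\delta}e^{r_{j}(\eta)t}$ for spatial decay, yielding $(1+t)^{-3/2}B_{m}(|x|,t)$. For $j=0,1$ (where $a_{j}=\pm\mathbf{c}$), a polar decomposition reduces the integral to a radial oscillatory integral with phase $|x|\cos\theta - a_{j}t$; angular integration by parts on $S^{2}$ and radial stationary-phase analysis localize mass to a $\sqrt{t}$-neighborhood of the sound cone $|x|=a_{j}t$, producing the Huygens contribution $(1+t)^{-3/2}B_{3/2}(|x|-a_{j}t,t)$.

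Handling derivatives is then comparatively routine. Each $\pa_{x}^{\alpha}$ brings a factor $(i\eta)^{\alpha}$, absorbed by the diffusive weight $e^{-A_{j}|\eta|^{2}t}$ to yield the gain $(1+t)^{-|\alpha|/2}$. Each $\pa_{\xi}^{\beta}$ acts only on $e_{j}(\eta)(\xi)$, and since this is a polynomial times $\mathcal{M}^{1/2}$, the derivative is controlled in $L_{\xi,Q}^{\infty}$ for every $Q\geq 0$. For the $\mathrm{P}_{1}h_{0}$ estimate, the key observation is that $E_{j}\in\mathrm{Ker}(L)$ is orthogonal to $\widehat{\mathrm{P}_{1}h_{0}}$, so $\langle e_{j}(\eta),\widehat{\mathrm{P}_{1}h_{0}}\rangle_{\xi}=|\eta|\langle \tilde{e}_{j}(\eta),\widehat{\mathrm{P}_{1}h_{0}}\rangle_{\xi}=O(|\eta|)$, and this extra factor of $|\eta|$ trades with the Gaussian weight to produce the additional $(1+t)^{-1/2}$ decay. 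The exponential residue $e^{-ct}$ collects the contribution from the annulus $\delta/2\leq|\eta|\leq\delta$, where $\mathrm{Re}\,\lambda_{j}(\eta)\leq -c$.

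The principal obstacle is the non-analyticity of $\lambda_{j}(\eta)$ at $\eta=0$ for $0\leq\gamma<1$, which blocks the contour-shift arguments used in the hard-sphere case and therefore precludes exponential-in-$(|x|-a_{j}t)$ decay outside the cone. To bypass this I would adopt the multiplier method of \cite{[Wang-Wu]}: multiply the spatial kernel by $(1+|x|^{2})^{m}$, translate it via Fourier duality into the operator $(1-\Delta_{\eta})^{m}$ acting inside the $\eta$-integral, and exploit the smoothness (though not analyticity) of $\lambda_{j}$ and $e_{j}$ on $\mathrm{supp}\,\chi_{\delta}$ together with the dominant damping $e^{-A_{j}|\eta|^{2}t}$ to control the resulting expression. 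This yields polynomial-order decay $B_{m}$ for any prescribed $m$ at the price of losing finitely many derivatives of the symbol, which is harmless given the compact $\eta$-support. Carefully tracking constants through $(1-\Delta_{\eta})^{m}$ to ensure they remain uniform in $t$, and confirming that the residual annulus contribution never degrades the sound-cone width, are the most delicate bookkeeping tasks.
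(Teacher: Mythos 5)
Your strategy is essentially the paper's: the paper's own justification for this proposition is a citation to the multiplier method of \cite{[Wang-Wu]}, adapted from the hard-sphere analyses \cite{[LiuYu1],[LiuYu2]}, and you correctly identify the main ingredients, namely the spectral decomposition into diffusion modes ($a_{j}=0$) and sound modes ($a_{j}=\pm\mathbf{c}$), the replacement of the hard-sphere contour-shift by $(1-\Delta_{\eta})^{m}$ to convert $\eta$-smoothness into polynomial spatial decay $B_{m}$, absorbing $(i\eta)^{\alpha}$ into the diffusive weight for the $(1+t)^{-|\alpha|/2}$ gain, and using $\langle E_{j},\widehat{\mathrm{P}_{1}h_{0}}\rangle_{\xi}=0$ to extract the extra factor $|\eta|$ in the second estimate.

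There are, however, two substantive inaccuracies in your sketch of the sound-wave modes $j=0,1$, and as written your plan would not reproduce the structure of $\Phi_{m}$. First, the decay along the sound cone is $(1+t)^{-2}$, not $(1+t)^{-3/2}$: the angular stationary phase on $S^{2}$ contributes $|x|^{-1}$ and the radial Gaussian localization $(1+t)^{-1}$, so near $|x|\sim\mathbf{c}t$ one gets $(|x|\,t)^{-1}\sim(1+t)^{-2}$, and the multiplier method then yields $B_{m}$ for arbitrary $m$, giving the third term $(1+t)^{-2}B_{m}(|x|-\mathbf{c}t,t)$ of $\Phi_{m}$. Second, you omit the Riesz wave $\mathbf{1}_{\{|x|\leq\mathbf{c}t\}}(1+t)^{-3/2}B_{3/2}(|x|,t)$, the first term of $\Phi_{m}$. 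It does not arise from a $\sqrt{t}$-neighborhood of the cone at all; rather it is the algebraic-tail contribution of the radial oscillatory integral \emph{inside} the cone, where the phase $r(|x|-\mathbf{c}t)$ is nonzero and the boundary term at $r=0$ produces a power law with the fixed index $3/2$ (independent of $m$). Your analysis of $j=0,1$ must therefore treat the interior region $|x|<\mathbf{c}t$ separately; otherwise the claimed bound degenerates precisely where the Riesz wave dominates.
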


We also have the following time decay estimates from \cite{[LinWangWu1]}:
\begin{proposition}
 \label{Prop-linear}Let $h$ be a solution of the
linearized Boltzmann equation \eqref{eq:linear0} with the initial data $h_{0}$. Then for any  $\alpha,\beta\in\mathbb{N}_0^3$, any $Q>3/2$
\begin{equation*}
\left\Vert \pa_{\xi }^{\beta }\mathbb{G}_{L;0}^{t}h_{0}\right\Vert _{L_{\xi
,Q}^{\infty }L_{x}^{\infty }}\lesssim (1+t)^{-\frac{3}{2}}\Vert h_{0}\Vert
_{L_{\xi }^{\infty }(L_{x}^{\infty }\cap L_{x}^{1})}\text{,}
\end{equation*}%
\begin{equation*}
\left\Vert \mathbb{G}_{S}^{t}h_{0}\right\Vert _{L_{\xi ,Q}^{\infty
}L_{x}^{\infty }}\lesssim e^{-ct}\left( \Vert h_{0}\Vert _{L_{\xi
,Q}^{\infty }L_{x}^{\infty }}+|h_{0}\Vert _{L_{\xi }^{2}L_{x}^{2}}\right)
\text{,}
\end{equation*}%
\begin{equation*}
\left\Vert \pa_{x}^{\alpha }\mathbb{G}_{L;\perp }^{t}h_{0}\right\Vert
_{L_{\xi ,Q}^{\infty }L_{x}^{\infty }}\lesssim e^{-ct}\left( \Vert
h_{0}\Vert _{L_{\xi ,Q}^{\infty }L_{x}^{\infty }}+|h_{0}\Vert _{L_{\xi
}^{2}L_{x}^{2}}\right) \text{.}
\end{equation*}%
\end{proposition}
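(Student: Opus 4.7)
Proof proposal. I would treat the three pieces separately, using the Fourier/spectral decomposition recalled in Section \ref{sec:2.2}. For the fluid long-wave part $\mathbb{G}_{L;0}^t$, I would write the action as an $x$-convolution with kernel $K_{L;0}^t(x,\xi,\xi_\ast)$ whose $x$-Fourier transform equals $\chi_\delta(\eta)\sum_{j=0}^4 e^{\lambda_j(\eta)t}e_j(\eta,\xi)\overline{e_j(-\eta,\xi_\ast)}$. Since $\mathrm{Re}\,\lambda_j(\eta)\leq -c|\eta|^2$ near the origin and $\chi_\delta$ is supported in $|\eta|\leq\delta$, Hausdorff--Young gives, uniformly in $(\xi,\xi_\ast)$,
\begin{equation*}
\|\widehat{K_{L;0}^t}(\cdot,\xi,\xi_\ast)\|_{L^1_\eta}\lesssim \int_{|\eta|\leq\delta}e^{-c|\eta|^2 t}\,d\eta\lesssim (1+t)^{-3/2},
\end{equation*}
hence $\|K_{L;0}^t(\cdot,\xi,\xi_\ast)\|_{L^\infty_x}\lesssim (1+t)^{-3/2}$. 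Young's inequality in $x$ against $L^1_x$ data yields the polynomial decay. For short times $(1+t)^{-3/2}=O(1)$ is covered by the complementary Young estimate $\|K_{L;0}^t *_x h_0\|_{L^\infty_x}\leq \|K_{L;0}^t\|_{L^1_x}\|h_0\|_{L^\infty_x}$, uniformly bounded on $t\leq 1$ because the symbol is compactly supported in $\eta$ with bounded derivatives of any order. The velocity side is easy: each eigenfunction $e_j(\eta,\cdot)=E_j+O(|\eta|)$ is Schwartz in $\xi$ by \eqref{Eigen function}, so $\langle\xi\rangle^Q\partial_\xi^\beta e_j$ is uniformly bounded on the support of $\chi_\delta$, producing the weighted $L^\infty_{\xi,Q}$ bound for any $Q$ and any multi-index $\beta$.

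For $\mathbb{G}_S^t$ and $\mathbb{G}_{L;\perp}^t$, the Fourier symbols are spectrally gapped: $\|\hat{\mathbb{G}}_S(\eta,t)\|_{L^2_\xi\to L^2_\xi}$ and $\|\hat{\mathbb{G}}_{L;\perp}(\eta,t)\|_{L^2_\xi\to L^2_\xi}$ are both bounded by $e^{-ct}$ for some $c>0$ on their respective ranges of $\eta$. To upgrade this $L^2$ estimate to weighted $L^\infty_{x,\xi,Q}$, I would iterate the Duhamel formula $h=\mathbb{S}^t h_0+\int_0^t \mathbb{S}^{t-\tau}Kh(\tau)\,d\tau$, extracting the Picard iterates $h^{(k)}=\mathbf{M}_k^t h_0$ for $k=0,\ldots,N-1$ (each controlled in $L^\infty_{x,\xi,Q}$ by the pointwise damped-transport bound \eqref{eq:trans} together with the weight-transfer estimates for $K$ from Lemma \ref{lemma-colli-freq}) and representing the remainder as $\int_0^t\mathbb{G}^{t-\tau} Kh^{(N-1)}(\tau)\,d\tau$. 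By the Enhanced Mixture Lemma the remainder belongs to $L^2_{x,\xi}$; the spectral gap then yields $e^{-ct}$ decay there; and a final application of $\int_0^t\mathbb{S}^{t-\tau}K(\cdot)\,d\tau$ returns us to weighted $L^\infty_{x,\xi,Q}$ via the $K$-kernel bounds. The Fourier cutoffs $\chi_\delta(\eta)\Pi_\eta^\perp$ and $1-\chi_\delta(\eta)$ then isolate $\mathbb{G}_{L;\perp}^t$ and $\mathbb{G}_S^t$ respectively.

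The main obstacle is balancing the velocity weight $\langle\xi\rangle^Q$ against the degeneracy of the hard-potential damping: since $\nu(\xi)\sim\langle\xi\rangle^\gamma$ with $\gamma<1$, the damped transport $\mathbb{S}^t$ alone does not produce exponential decay in weighted $L^\infty_\xi$, and the exponential rate must be manufactured by enough applications of $K$. Choosing $N$ large enough so that the Enhanced Mixture Lemma simultaneously supplies the velocity decay needed to absorb $\langle\xi\rangle^Q$ and the regularity needed to pass from the $L^2_x$ spectral-gap estimate back to $L^\infty_x$ is the delicate point. The fluid long-wave estimate sidesteps this difficulty because its eigenprojections are already Schwartz in $\xi$; the trade-off is that it requires $L^1_x$ input, which is why its right-hand side features the $L^1_x\cap L^\infty_x$ norm rather than the purely weighted $L^\infty_{x,\xi,Q}$ norm used for the other two pieces.
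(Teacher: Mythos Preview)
The paper does not prove this proposition; it simply cites \cite{[LinWangWu1]}. Your sketch reconstructs essentially the standard argument used there and in related works (Fourier-side heat-kernel estimate for $\mathbb{G}_{L;0}^t$; Picard iteration plus $L^2$ spectral gap plus Sobolev bootstrap for $\mathbb{G}_S^t$ and $\mathbb{G}_{L;\perp}^t$), and is correct in outline.

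Two points worth tightening. First, for the non-fluid pieces your phrase ``the Fourier cutoffs then isolate $\mathbb{G}_{L;\perp}^t$ and $\mathbb{G}_S^t$'' at the end is placed backwards: the cutoffs should be applied to the \emph{initial data}, giving $h_{0S}$ and $h_{0L}$, and then one runs the Picard iteration on those. This works cleanly because both $\mathbb{S}^t$ and $K$ commute with spatial Fourier multipliers ($\mathbb{S}^t$ is itself a multiplier in $x$, and $K$ acts only in $\xi$), so every iterate and the remainder stay in the correct spectral sector. Applying the cutoff as a post-processing step on an $L^\infty_x$ function is also fine (the symbol $1-\chi_\delta$ has Schwartz inverse transform), but obscures the logic. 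Second, the Enhanced Mixture Lemma is overkill here: the original Mixture Lemma (regularity transfer only) already gives the $H^2_x$ control on the source needed for Sobolev embedding $H^2_x\hookrightarrow L^\infty_x$, and the upgrade from $L^2_\xi$ to $L^\infty_{\xi,Q}$ comes from one or two further applications of $K$ (each gaining $2-\gamma>1$ in velocity weight), not from the mixture operator itself. Since Proposition~\ref{Prop-linear} is stated in Section~\ref{sec:2.2} before the Enhanced Mixture Lemma is proved, relying on the latter would also be logically awkward, though not circular.
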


\subsection{New derivative estimates of the integral operator $K$}

In this subsection, we revisit the integral operator $K$. We
will study its derivative estimates, which is necessary for
the proof of the Enhanced Mixture Lemma.

\subsubsection{Estimate of $K_{1}$ operator\label{K1-Estimate}}
The estimates for $K_{1}$ operator are straightforward. It is an integral operator with kernel $%
k_{1}\left( \xi ,\xi _{\ast }\right) $, i.e.,
\begin{equation*}
K_{1}f=\int_{\mathbb{R}^{3}}k_{1}(\xi ,\xi _{\ast })f(\xi _{\ast })d\xi
_{\ast }
\end{equation*}%
with%
\begin{align*}
k_{1}(\xi ,\xi _{\ast })& =\mathcal{M}^{1/2}(\xi )\int_{\mathbf{S}^{2}}M^{1/2}(\xi
_{\ast })B(|\xi _{\ast }-\xi |,\Omega )d\Omega \\
& =(2\pi )^{-3/2}\gamma _{0}|\xi _{\ast }-\xi |^{\gamma }e^{-\frac{|\xi
|^{2}+|\xi _{\ast }|^{2}}{4}}\text{.}
\end{align*}%
Letting
\begin{equation*}
\xi _{\ast }+\xi =u\text{,}\quad \xi _{\ast }-\xi =v\text{,}
\end{equation*}%
we have
\begin{equation*}
\partial _{\xi _{\ast }}=\partial _{u}+\partial _{v}\text{,}\quad \partial
_{\xi }=\partial _{u}-\partial _{v}\text{.}
\end{equation*}%
In terms of $u$ and $v$, $k_1$ is expressed as (by abuse of notation, and without causing confusion)
\begin{equation*}
k_{1}(u,v)=C|v|^{\gamma }e^{-\frac{|u|^{2}+|v|^{2}}{8}}\text{,}
\end{equation*}%
and
\begin{equation*}
|\partial _{u}^{\beta }k_{1}(u,v)|\leq C|v|^{\gamma }e^{-\frac{%
|u|^{2}+|v|^{2}}{9}}\text{,}\quad |\partial _{v}^{\beta }k_{1}(u,v)|\leq
C|v|^{\gamma -|\beta |}e^{-\frac{|u|^{2}+|v|^{2}}{9}}\text{,}
\end{equation*}%
\begin{equation*}
|\partial _{u}^{\beta _{1}}\partial _{v}^{\beta _{2}}k_{1}(u,v)|\leq
C|v|^{\gamma -|\beta _{2}|}e^{-\frac{|u|^{2}+|v|^{2}}{9}}\text{.}
\end{equation*}

\subsubsection{Estimate of $K_{2}$}

We first use Hilbert-Carleman coordinates to express the operator $K_2$ in terms of a kernel function $k_{2}(\xi ,\eta )$. Although this procedure is standard, we include the details here for the reader's convenience. With this representation, we then perform derivative estimates.

By conservation of energy during the collision
\begin{equation*}
\mathcal{M}(\xi )\mathcal{M}(\xi _{\ast })=\mathcal{M}(\xi ^{\prime })\mathcal{M}(\xi _{\ast }^{\prime })\text{,}
\end{equation*}%
one has
\begin{equation*}
K_{2}f=\int_{\mathbb{R}^{3}}\int_{\mathbf{S}^{2}}\mathcal{M}^{1/2}(\xi _{\ast })\left(
\mathcal{M}^{1/2}(\xi ^{\prime })f(\xi _{\ast }^{\prime })+\mathcal{M}^{1/2}(\xi _{\ast
}^{\prime })f(\xi ^{\prime })\right) B(|\xi _{\ast }-\xi |,\Omega )d\Omega
d\xi _{\ast }\text{.}
\end{equation*}

Let $V=\xi _{\ast }-\xi $. For fixed $\Omega \in \mathbf{S}^{2}$, write
\begin{equation*}
V=V_{\parallel }+V_{\perp }
\end{equation*}%
with
\begin{equation*}
V_{\parallel }=(\Omega \cdot V)\Omega \text{,}\quad V_{\perp
}=V-V_{\parallel }=(\Omega _{\perp }\cdot V)\Omega _{\perp }\text{,}
\end{equation*}%
where $\Omega _{\perp }$ is a unit vector perpendicular to $\Omega $. Hence
we have
\begin{equation*}
\xi ^{\prime }=\xi -\xi _{\ast }+\xi _{\ast }+(\Omega \cdot V)\Omega =\xi
_{\ast }-[V-(\Omega \cdot V)\Omega ]=\xi _{\ast }-(\Omega _{\perp }\cdot
V)\Omega _{\perp }\text{,}
\end{equation*}%
\begin{equation*}
\xi _{\ast }^{\prime }=\xi _{\ast }-\xi +\xi -(\Omega \cdot V)\Omega =\xi
+[V-(\Omega \cdot V)\Omega ]=\xi +(\Omega _{\perp }\cdot V)\Omega _{\perp }%
\text{.}
\end{equation*}%
Setting a coordinate frame with $V$ pointing to the north pole, $\Omega $
can be expressed as
\begin{equation*}
\Omega =\left( \sin \theta \cos \varphi ,\sin \theta \sin \varphi ,\cos
\theta \right) \text{,}
\end{equation*}%
where $\theta $ is the angle between $V$ and $\Omega $. Note that $B$
depends only on the $\lvert \cos \theta \rvert$. Then
\begin{equation*}
K_{2}f=2\int_{\mathbb{R}^{3}}\int_{0}^{\pi /2}\int_{0}^{2\pi }\mathcal{M}^{1/2}(\xi
_{\ast })\left( \mathcal{M}^{1/2}(\xi ^{\prime })f(\xi _{\ast }^{\prime })+\mathcal{M}^{1/2}(\xi
_{\ast }^{\prime })f(\xi ^{\prime })\right) B(|\xi _{\ast }-\xi |,\theta
)\sin \theta d\varphi d\theta d\xi _{\ast }\text{.}
\end{equation*}%
Under the transformation $\theta \rightarrow \pi /2-\theta $, $\varphi
\rightarrow \varphi +\pi $, $\Omega $ will change to $\Omega _{\perp }$, and
$\xi ^{\prime }\rightarrow \xi _{\ast }^{\prime }$, $\xi _{\ast }^{\prime
}\rightarrow \xi ^{\prime }$, so that the first term in $K_{2}f$ becomes
\begin{equation*}
2\int_{\mathbb{R}^{3}}\int_{0}^{\pi /2}\int_{0}^{2\pi }\mathcal{M}^{1/2}(\xi _{\ast
})\mathcal{M}^{1/2}(\xi _{\ast }^{\prime })f(\xi ^{\prime })B(|\xi _{\ast }-\xi |,%
\frac{\pi }{2}-\theta )\cos \theta d\varphi d\theta d\xi _{\ast }\text{.}
\end{equation*}%
Hence,
\begin{equation*}
K_{2}f=2\int_{\mathbb{R}^{3}}\int_{0}^{\pi /2}\int_{0}^{2\pi }\mathcal{M}^{1/2}(\xi
_{\ast })\mathcal{M}^{1/2}(\xi _{\ast }^{\prime })f(\xi ^{\prime })\left[ B(|\xi
_{\ast }-\xi |,\theta )+B(|\xi _{\ast }-\xi |,\frac{\pi }{2}-\theta )\frac{%
\cos \theta }{\sin \theta }\right] \sin \theta d\varphi d\theta d\xi _{\ast }%
\text{.}
\end{equation*}%
Define
\begin{equation*}
\mathcal{B}\left( |\xi _{\ast }-\xi |,\theta \right) =%
\begin{cases}
B(|\xi _{\ast }-\xi |,\theta )+B(|\xi _{\ast }-\xi |,\frac{\pi }{2}-\theta )%
\frac{\cos \theta }{\sin \theta }, & \theta \in (0,\pi /2)\text{,} \\
B(|\xi _{\ast }-\xi |,\pi -\theta )+B(|\xi _{\ast }-\xi |,\theta -\frac{\pi
}{2})|\frac{\cos \theta }{\sin \theta }|\text{,} & \theta \in (\pi /2,\pi )%
\text{.}%
\end{cases}%
\end{equation*}%
Then
\begin{equation*}
\mathcal{B}\left( |\xi _{\ast }-\xi |,\theta \right) =\mathcal{B}\left( |\xi
_{\ast }-\xi |,\pi -\theta \right) \text{,}\quad \theta \in (0,\pi )\text{,}
\end{equation*}%
and so the angle integral can be rewritten as a sphere integral
\begin{equation*}
K_{2}f=\int_{\mathbb{R}^{3}}\int_{\mathbf{S}^{2}}\mathcal{M}^{1/2}(\xi _{\ast
})\mathcal{M}^{1/2}(\xi _{\ast }^{\prime })f(\xi ^{\prime })\mathcal{B}\left( |\xi
_{\ast }-\xi |,\theta \right) d\Omega d\xi _{\ast }\text{.}
\end{equation*}%
Switching the integration order gives
\begin{align*}
K_{2}f& =\int_{\mathbf{S}^{2}}\int_{\mathbb{R}}\int_{V_{\perp }\perp \Omega
}\mathcal{M}^{1/2}(\xi _{\ast })\mathcal{M}^{1/2}(\xi _{\ast }^{\prime })f(\xi ^{\prime })%
\mathcal{B}\left( |\xi _{\ast }-\xi |,\theta \right) d^{2}V_{\perp }dzd\Omega
\\
& =2\int_{\mathbf{S}^{2}}\int_{0}^{+\infty }\int_{V_{\perp }\perp \Omega
}\mathcal{M}^{1/2}(\xi _{\ast })\mathcal{M}^{1/2}(\xi _{\ast }^{\prime })f(\xi ^{\prime })%
\mathcal{B}\left( |\xi _{\ast }-\xi |,\theta \right) d^{2}V_{\perp
}dzd\Omega \text{,}
\end{align*}%
where
\begin{equation*}
\xi _{\ast }-\xi =V=z\Omega +V_{\perp }\text{.}
\end{equation*}%
Combining $z$ and $\Omega $ together, $V_{\parallel }=z\Omega $ and we get a
three dimensional integral with volume element
\begin{equation*}
d^{3}V_{\parallel }=z^{2}dzd\Omega \text{,}
\end{equation*}%
and
\[
\xi _{\ast } =\xi +V=\xi +V_{\parallel }+V_{\perp },\quad \xi ^{\prime } =\xi +(\Omega \cdot V)\Omega =\xi +V_{\parallel },\quad \xi _{\ast }^{\prime } =\xi _{\ast }-(\Omega \cdot V)\Omega =\xi +V_{\perp }.
\]
Therefore,
\begin{align*}
K_{2}f& =2\int_{\mathbb{R}^{3}}\int_{V_{\perp }\perp V_{\parallel
}}\mathcal{M}^{1/2}(\xi _{\ast })\mathcal{M}^{1/2}(\xi _{\ast }^{\prime })f(\xi ^{\prime })%
\mathcal{B}\left( |\xi _{\ast }-\xi |,\theta \right) d^{2}V_{\perp }\frac{1}{%
|V_{\parallel }|^{2}}d^{3}V_{\parallel } \\
& =\int_{\mathbb{R}^{3}}\left[ \frac{2}{|V_{\parallel }|^{2}}\int_{V_{\perp
}\perp V_{\parallel }}\mathcal{M}^{1/2}(\xi +V_{\parallel }+V_{\perp })\mathcal{M}^{1/2}(\xi
+V_{\perp })\mathcal{B}\left( |V|,\theta \right) d^{2}V_{\perp }\right]
f(\xi +V_{\parallel })d^{3}V_{\parallel } \\
& =: \int_{\mathbb{R}^{3}}\tilde{k}_{2}(\xi ,V_{\parallel })f(\xi
+V_{\parallel })d^{3}V_{\parallel }  =\int_{\mathbb{R}^{3}}\tilde{k}_{2}(\xi ,\eta -\xi )f(\eta )d^{3}\eta\\
&
=:\int_{\mathbb{R}^{3}}k_{2}(\xi ,\eta )f(\eta )d^{3}\eta
\end{align*}%
with $\cos \theta =\frac{V_{\parallel }}{\sqrt{|V_{\parallel
}|^{2}+|V_{\perp }|^{2}}}\ $and
\begin{align}
\tilde{k}_{2}(\xi ,V_{\parallel })& =\frac{1}{\sqrt{2\pi ^{3}}|V_{\parallel
}|}\int_{V_{\perp }\perp V_{\parallel }}\exp \left\{ -\frac{1}{4}|\xi
+V_{\parallel }+V_{\perp }|^{2}-\frac{1}{4}|\xi +V_{\perp }|^{2}\right\} \notag \\
& \qquad (|V_{\parallel }|^{2}+|V_{\perp }|^{2})^{\frac{\gamma -1}{2}}\frac{%
\left( \beta (\theta )+\beta \left( \frac{\pi }{2}-\theta \right) \frac{%
\lvert \cos \theta \rvert}{|\sin \theta |}\right) }{\lvert \cos \theta \rvert}d^{2}V_{\perp }\text{%
.} \label{eq:k2tilde}
\end{align}%
In fact, we can further write it into a more compact form
\begin{equation*}
\tilde{k}_{2}(\xi ,V_{\parallel })=\frac{1}{\sqrt{2\pi ^{3}}|V_{\parallel
}|^{2-\gamma }}\int_{V_{\perp }\perp V_{\parallel }}e^{-\frac{1}{4}|\xi
+V_{\parallel }+V_{\perp }|^{2}-\frac{1}{4}|\xi +V_{\perp }|^{2}}\Theta (%
\frac{|V_{\perp }|}{|V_{\parallel }|})d^{2}V_{\perp }\text{,}
\end{equation*}%
with
\begin{equation*}
\Theta (\frac{|V_{\perp }|}{|V_{\parallel }|})=(1+\frac{|V_{\perp }|^{2}}{%
|V_{\parallel }|^{2}})^{\frac{\gamma -1}{2}}\frac{\beta (\arctan \frac{%
|V_{\perp }|}{|V_{\parallel }|})+\beta (\frac{\pi }{2}-\arctan \frac{%
|V_{\perp }|}{|V_{\parallel }|})(\frac{|V_{\perp }|}{|V_{\parallel }|})^{-1}%
}{\frac{1}{\sqrt{1+\frac{|V_{\perp }|^{2}}{|V_{\parallel }|^{2}}}}}\text{.}
\end{equation*}

\begin{remark}
Note that for hard sphere model, $\Theta(\frac{|V_{\perp}|}{|V_{\parallel}|}%
)=1$.
\end{remark}

The primary goal in this subsection is to give the high order derivatives
(both in $\xi ,V_{\parallel }$) estimates of $\tilde{k}_{2}(\xi
,V_{\parallel })$. The $\xi $-derivatives are easy to obtain, while the $%
V_{\parallel }$-derivatives is  more subtle.

Choose an orthonormal basis $\left\{ e_{1},e_{2},\frac{V_{\parallel }}{%
|V_{\parallel }|}\right\} $ for $\mathbb{R}^{3}$ which depends on $%
V_{\parallel }$. Then $V_{\perp }\in \mathrm{span}\left\{
e_{1},e_{2}\right\} $. At first glance, the integral plane depends on the
direction of $V_{\parallel }$. However, a close look at the integrand shows
the variation of the coordinate frame does not have contribution. In fact,
the integrand depends on $V_{\perp }$ through functions of the following
forms
\begin{equation*}
|V_{\perp }|\text{,}\quad \left\langle \xi ,V_{\perp }\right\rangle
=V_{\perp }\cdot \xi _{\perp }\text{,}
\end{equation*}%
where $\xi _{\perp }$ is the projection of $\xi $ to the component
perpendicular to $V_{\parallel }$. The inner product is preserved under the
variation of the direction of normal vector. Therefore,

\begin{equation*}
\partial _{\xi }^{\alpha }\partial _{V_{\parallel }^i}
\tilde{k}_{2}(\xi
,V_{\parallel })=\frac{1}{\sqrt{2\pi ^{3}}}\int_{V_{\perp }\perp
V_{\parallel }}\partial _{\xi }^{\alpha }\partial _{V_{\parallel }^i}\left[
\frac{e^{-\frac{1}{4}|\xi +V_{\parallel }+V_{\perp }|^{2}-\frac{1}{4}|\xi
+V_{\perp }|^{2}}}{|V_{\parallel }|^{2-\gamma }}\Theta (\frac{|V_{\perp }|}{%
|V_{\parallel }|})\right] d^{2}V_{\perp }\text{,}
\end{equation*}%
for multi-index $\alpha =\left( \alpha _{1},\alpha _{2},\alpha _{3}\right) $
and $i=1$, $2$, $3$. The derivative is the sum of the following terms:
\begin{align*}
& \partial _{V_{\parallel }^i}\partial _{\xi }^{\alpha }\left[ e^{-\frac{1%
}{4}|\xi +V_{\parallel }+V_{\perp }|^{2}-\frac{1}{4}|\xi +V_{\perp }|^{2}}%
\right] \Theta (\frac{|V_{\perp }|}{|V_{\parallel }|})|V_{\parallel
}|^{\gamma -2}\text{,} \\
& \partial _{\xi }^{\alpha }\left[ e^{-\frac{1}{4}|\xi +V_{\parallel
}+V_{\perp }|^{2}-\frac{1}{4}|\xi +V_{\perp }|^{2}}\right] \Theta (\frac{%
|V_{\perp }|}{|V_{\parallel }|})\partial _{V_{\parallel }^i}\left[
|V_{\parallel }|^{\gamma -2}\right] \text{,} \\
& \partial _{\xi }^{\alpha }\left[ e^{-\frac{1}{4}|\xi +V_{\parallel
}+V_{\perp }|^{2}-\frac{1}{4}|\xi +V_{\perp }|^{2}}\right] |V_{\parallel
}|^{\gamma -2}\partial _{V_{\parallel }^i}\left[ \Theta (\frac{|V_{\perp }|%
}{|V_{\parallel }|})\right] \text{.}
\end{align*}

\begin{remark}
For the hard sphere model, the third term vanishes.
\end{remark}

The first and second ones are easy to handle. For the first one, direct
inductions show that for any $0<q^{\prime }<1$,
\begin{equation*}
\left\vert \partial _{V_{\parallel }^i}\partial _{\xi }^{\alpha }\left[
e^{-\frac{1}{4}|\xi +V_{\parallel }+V_{\perp }|^{2}-\frac{1}{4}|\xi
+V_{\perp }|^{2}}\right] \Theta (\frac{|V_{\perp }|}{|V_{\parallel }|}%
)|V_{\parallel }|^{\gamma -2}\right\vert \leq Ce^{-\frac{q^{\prime }}{4}|\xi
+V_{\parallel }+V_{\perp }|^{2}-\frac{q^{\prime }}{4}|\xi +V_{\perp
}|^{2}}\Theta (\frac{|V_{\perp }|}{|V_{\parallel }|})|V_{\parallel
}|^{\gamma -2}\text{.}
\end{equation*}%
For the second one,
\begin{equation*}
\left\vert \partial _{\xi }^{\alpha }\left[ e^{-\frac{1}{4}|\xi
+V_{\parallel }+V_{\perp }|^{2}-\frac{1}{4}|\xi +V_{\perp }|^{2}}\right]
\Theta (\frac{|V_{\perp }|}{|V_{\parallel }|})\partial _{V_{\parallel }^i}%
\left[ |V_{\parallel }|^{\gamma -2}\right] \right\vert \leq Ce^{-\frac{%
q^{\prime }}{4}|\xi +V_{\parallel }+V_{\perp }|^{2}-\frac{q^{\prime }}{4}%
|\xi +V_{\perp }|^{2}}\Theta (\frac{|V_{\perp }|}{|V_{\parallel }|}%
)|V_{\parallel }|^{\gamma -3}\text{.}
\end{equation*}%
The third one is more subtle. Observing the following identity
\begin{equation*}
\nabla _{V_{\parallel }}\Theta (\frac{|V_{\perp }|}{|V_{\parallel }|}%
)=-\Theta ^{\prime }(\frac{|V_{\perp }|}{|V_{\parallel }|})\frac{|V_{\perp }|%
}{|V_{\parallel }|^{3}}V_{\parallel },\qquad \nabla _{V_{\perp }}\Theta (%
\frac{|V_{\perp }|}{|V_{\parallel }|})=\Theta ^{\prime }(\frac{|V_{\perp }|}{%
|V_{\parallel }|})\frac{V_{\perp }}{|V_{\parallel }||V_{\perp }|}\text{,}
\end{equation*}%
we thus have
\begin{equation*}
\partial _{V_{\parallel }^i}\Theta (\frac{|V_{\perp }|}{|V_{\parallel }|}%
)=-\left( V_{\perp }\cdot \nabla _{V_{\perp }}\right) \Theta (\frac{%
|V_{\perp }|}{|V_{\parallel }|})\frac{V_{\parallel }^{i}}{|V_{\parallel
}|^{2}}\text{.}
\end{equation*}%
Using integration by parts, we get
\begin{align*}
& \int_{V_{\perp }\perp V_{\parallel }}\partial _{\xi }^{\alpha }\left[ e^{-%
\frac{1}{4}|\xi +V_{\parallel }+V_{\perp }|^{2}-\frac{1}{4}|\xi +V_{\perp
}|^{2}}\right] |V_{\parallel }|^{\gamma -2}\partial _{V_{\parallel }^i}%
\left[ \Theta (\frac{|V_{\perp }|}{|V_{\parallel }|})\right] d^{2}V_{\perp }
\\
&= -\int_{V_{\perp }\perp V_{\parallel }}\partial _{\xi }^{\alpha }\left[
e^{-\frac{1}{4}|\xi +V_{\parallel }+V_{\perp }|^{2}-\frac{1}{4}|\xi
+V_{\perp }|^{2}}\right] |V_{\parallel }|^{\gamma -2}\left( V_{\perp }\cdot
\nabla _{V_{\perp }}\right) \Theta (\frac{|V_{\perp }|}{|V_{\parallel }|})%
\frac{V_{\parallel }^{i}}{|V_{\parallel }|^{2}}d^{2}V_{\perp } \\
&= 2\int_{V_{\perp }\perp V_{\parallel }}\partial _{\xi }^{\alpha }\left[
e^{-\frac{1}{4}|\xi +V_{\parallel }+V_{\perp }|^{2}-\frac{1}{4}|\xi
+V_{\perp }|^{2}}\right] |V_{\parallel }|^{\gamma -2}\Theta (\frac{|V_{\perp
}|}{|V_{\parallel }|})\frac{V_{\parallel }^{i}}{|V_{\parallel }|^{2}}%
d^{2}V_{\perp } \\
& \quad +\int_{V_{\perp }\perp V_{\parallel }}\partial _{\xi }^{\alpha }\left[
\left( V_{\perp }\cdot \nabla _{V_{\perp }}\right) e^{-\frac{1}{4}|\xi
+V_{\parallel }+V_{\perp }|^{2}-\frac{1}{4}|\xi +V_{\perp }|^{2}}\right]
|V_{\parallel }|^{\gamma -2}\Theta (\frac{|V_{\perp }|}{|V_{\parallel }|})%
\frac{V_{\parallel }^{i}}{|V_{\parallel }|^{2}}d^{2}V_{\perp }\text{.}
\end{align*}%
Note that%
\begin{equation*}
\left\vert \partial _{\xi }^{\alpha }\left[ e^{-\frac{1}{4}|\xi
+V_{\parallel }+V_{\perp }|^{2}-\frac{1}{4}|\xi +V_{\perp }|^{2}}\right]
|V_{\parallel }|^{\gamma -2}\Theta (\frac{|V_{\perp }|}{|V_{\parallel }|})%
\frac{V_{\parallel }^{i}}{|V_{\parallel }|^{2}}\right\vert \leq Ce^{-\frac{%
q^{\prime }}{4}|\xi +V_{\parallel }+V_{\perp }|^{2}-\frac{q^{\prime }}{4}%
|\xi +V_{\perp }|^{2}}\Theta (\frac{|V_{\perp }|}{|V_{\parallel }|}%
)|V_{\parallel }|^{\gamma -3}\text{,}
\end{equation*}%
and
\begin{align*}
&\left\vert \partial _{\xi }^{\alpha }\left[ \left( V_{\perp }\cdot \nabla
_{V_{\perp }}\right) e^{-\frac{1}{4}|\xi +V_{\parallel }+V_{\perp }|^{2}-%
\frac{1}{4}|\xi +V_{\perp }|^{2}}\right] \right\vert \\
&=\left\vert -\partial _{\xi }^{\alpha }\left[ \left( \xi +V_{\perp
}\right) \cdot V_{\perp }e^{-\frac{1}{4}|\xi +V_{\parallel }+V_{\perp }|^{2}-%
\frac{1}{4}|\xi +V_{\perp }|^{2}}\right] \right\vert \\
&=\left\vert -\left[ \left( \xi +V_{\perp }\right) \cdot V_{\perp }\right]
\partial _{\xi }^{\alpha }\left[ e^{-\frac{1}{4}|\xi +V_{\parallel
}+V_{\perp }|^{2}-\frac{1}{4}|\xi +V_{\perp }|^{2}}\right] -\sum_{\alpha
_{1}\leq \alpha ,|\alpha _{1}|=1}V_{\perp }^{\alpha _{1}}\partial _{\xi
}^{\alpha -\alpha _{1}}\left[ e^{-\frac{1}{4}|\xi +V_{\parallel }+V_{\perp
}|^{2}-\frac{1}{4}|\xi +V_{\perp }|^{2}}\right] \right\vert \\
&\leq C\left( 1+|\xi _{\perp }|\right) e^{-\frac{q^{\prime }}{4}|\xi
+V_{\parallel }+V_{\perp }|^{2}-\frac{q^{\prime }}{4}|\xi +V_{\perp }|^{2}}%
\text{,}
\end{align*}%
for any $0<q^{\prime }<1$. Combining all the above estimates, we arrive at
\begin{align*}
\left\vert \partial _{\xi }^{\alpha }\partial _{V_{\parallel }^i}\tilde{k}%
_{2}(\xi ,V_{\parallel })\right\vert & \leq C\int_{V_{\perp }\perp
V_{\parallel }}e^{-\frac{q^{\prime }}{4}|\xi +V_{\parallel }+V_{\perp }|^{2}-%
\frac{q^{\prime }}{4}|\xi +V_{\perp }|^{2}}\Theta (\frac{|V_{\perp }|}{%
|V_{\parallel }|})|V_{\parallel }|^{\gamma -2}d^{2}V_{\perp } \\
& \quad +C\int_{V_{\perp }\perp V_{\parallel }}e^{-\frac{q^{\prime }}{4}|\xi
+V_{\parallel }+V_{\perp }|^{2}-\frac{q^{\prime }}{4}|\xi +V_{\perp
}|^{2}}\Theta (\frac{|V_{\perp }|}{|V_{\parallel }|})|V_{\parallel
}|^{\gamma -3}d^{2}V_{\perp } \\
& \quad +C\left( 1+|\xi _{\perp }|\right) \int_{V_{\perp }\perp V_{\parallel
}}e^{-\frac{q^{\prime }}{4}|\xi +V_{\parallel }+V_{\perp }|^{2}-\frac{%
q^{\prime }}{4}|\xi +V_{\perp }|^{2}}\Theta (\frac{|V_{\perp }|}{%
|V_{\parallel }|})|V_{\parallel }|^{\gamma -3}d^{2}V_{\perp }\text{.}
\end{align*}%
Returning back to the  expression similar to \eqref{eq:k2tilde}, one has
\begin{equation*}
\left\vert \partial _{\xi }^{\alpha }\partial _{V_{\parallel }^i}\tilde{k}%
_{2}(\xi ,V_{\parallel })\right\vert \leq C\frac{1+|V_{\parallel }|+|\xi
_{\perp }|}{|V_{\parallel }|^{2}}\int_{V_{\perp }\perp V_{\parallel }}\frac{%
e^{-\frac{q^{\prime }}{4}|\xi +V_{\parallel }+V_{\perp }|^{2}-\frac{%
q^{\prime }}{4}|\xi +V_{\perp }|^{2}}}{(|V_{\parallel }|^{2}+|V_{\perp
}|^{2})^{\frac{1-\gamma }{2}}}\frac{\beta ^{\ast }(\theta )}{\lvert \cos \theta \rvert}%
d^{2}V_{\perp },
\end{equation*}
where $\beta^{\ast}(\theta)= \beta (\theta )+\beta \left( \frac{\pi }{2}-\theta \right) \frac{%
	\lvert \cos \theta \rvert}{|\sin \theta |}$.

In light of the following basic identity
\begin{equation*}
|\xi +V_{\parallel }+V_{\perp }|^{2}+|\xi +V_{\perp }|^{2}=2|V_{\perp }+\xi +%
\frac{V_{\parallel }}{2}|^{2}+\frac{1}{2}|V_{\parallel }|^{2}=2|V_{\perp
}+\xi _{\perp }|^{2}+2|\xi _{\parallel }+\frac{V_{\parallel }}{2}|^{2}+\frac{%
1}{2}|V_{\parallel }|^{2}\text{,}
\end{equation*}%
where
\begin{equation*}
\xi =\xi _{\parallel }+\xi _{\perp },\quad \xi _{\parallel }=(\xi \cdot
\frac{V_{\parallel }}{|V_{\parallel }|})\frac{V_{\parallel }}{|V_{\parallel
}|},\quad \xi _{\perp }=\xi -\xi _{\parallel }=\frac{V_{\parallel }\wedge
(\xi \wedge V_{\parallel })}{|V_{\parallel }|^{2}}\text{,}
\end{equation*}%
and the fact that $\frac{\beta ^{\ast }(\theta )}{\lvert \cos \theta \rvert}\leq C$,
one obtains
\begin{equation*}
\left\vert \partial _{\xi }^{\alpha }\partial _{V_{\parallel }^i}\tilde{k}%
_{2}(\xi ,V_{\parallel })\right\vert \leq C\frac{1+|V_{\parallel }|+|\xi
_{\perp }|}{|V_{\parallel }|^{2}}e^{-\frac{q^{\prime }}{2}|\xi _{\parallel }+%
\frac{V_{\parallel }}{2}|^{2}-\frac{q^{\prime }}{8}|V_{\parallel
}|^{2}}\int_{V_{\perp }\perp V_{\parallel }}\frac{e^{-\frac{q^{\prime }}{2}%
|V_{\perp }+\xi _{\perp }|^{2}}}{(|V_{\parallel }|^{2}+|V_{\perp }|^{2})^{%
\frac{1-\gamma }{2}}}d^{2}V_{\perp }\text{.}
\end{equation*}%
When there  is no $V_{\parallel }$-derivative, one easily gets
\begin{equation*}
\left\vert \partial _{\xi }^{\alpha }\tilde{k}_{2}(\xi ,V_{\parallel
})\right\vert \leq \frac{C}{|V_{\parallel }|}e^{-\frac{q^{\prime }}{2}|\xi
_{\parallel }+\frac{V_{\parallel }}{2}|^{2}-\frac{q^{\prime }}{8}%
|V_{\parallel }|^{2}}\int_{V_{\perp }\perp V_{\parallel }}\frac{e^{-\frac{%
q^{\prime }}{2}|V_{\perp }+\xi _{\perp }|^{2}}}{(|V_{\parallel
}|^{2}+|V_{\perp }|^{2})^{\frac{1-\gamma }{2}}}d^{2}V_{\perp }\text{.}
\end{equation*}%
In view of Proposition 5.2 in \cite{[Caflisch]}, we have%
\begin{align*}
\int_{V_{\perp }\perp V_{\parallel }}\frac{e^{-\frac{q^{\prime }}{2}%
|V_{\perp }+\xi _{\perp }|^{2}}}{(|V_{\parallel }|^{2}+|V_{\perp }|^{2})^{%
\frac{1-\gamma }{2}}}d^{2}V_{\perp } &=\int_{w\perp V_{\parallel }}\frac{%
e^{-\frac{q^{\prime }}{2}|w|^{2}}}{(|V_{\parallel }|^{2}+|w-\xi _{\perp
}|^{2})^{\frac{1-\gamma }{2}}}d^{2}w \\
&\leq C\frac{1}{\left( 1+|V_{\parallel }|+\left\vert \xi _{\perp
}\right\vert \right) ^{1-\gamma }}\text{.}
\end{align*}%
Therefore, the following estimates for $\tilde{k}_{2}(\xi ,V_{\parallel })$
hold:
\begin{lemma}
For any $\left\vert \alpha \right\vert \geq 0$, $0<q^{\prime }<1$,
\begin{align*}
\left\vert \partial _{\xi }^{\alpha }\tilde{k}_{2}(\xi ,V_{\parallel
})\right\vert & \leq \frac{C}{|V_{\parallel }|}\frac{e^{-\frac{q^{\prime }}{2%
}|\xi _{\parallel }+\frac{V_{\parallel }}{2}|^{2}-\frac{q^{\prime }}{8}%
|V_{\parallel }|^{2}}}{(1+|V_{\parallel }|+\left\vert \xi _{\perp
}\right\vert )^{1-\gamma }}\text{,}%
\vspace {3mm}
\\
\left\vert \partial _{\xi }^{\alpha }\partial _{V_{\parallel }^i}\tilde{k}%
_{2}(\xi ,V_{\parallel })\right\vert & \leq C\frac{1+|\xi _{\perp }|}{%
|V_{\parallel }|^{2}}\frac{e^{-\frac{q^{\prime }}{2}|\xi _{\parallel }+\frac{%
V_{\parallel }}{2}|^{2}-\frac{q^{\prime }}{8}|V_{\parallel }|^{2}}}{%
(1+|V_{\parallel }|+\left\vert \xi _{\perp }\right\vert )^{1-\gamma }}\text{.%
}
\end{align*}%
where%
\begin{equation*}
\xi _{\parallel }=\frac{(\xi \cdot V_{\parallel })V_{\parallel }}{%
|V_{\parallel }|^{2}},\quad \xi _{\perp }=\xi -\xi _{\parallel }\text{.}
\end{equation*}
\end{lemma}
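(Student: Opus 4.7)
The plan is to assemble the two estimates claimed in the lemma by organizing the componentwise bounds derived immediately above the statement into a single unified calculation. I would start from the compact integral representation of $\tilde{k}_2(\xi,V_\parallel)$ and apply Leibniz rule to $\partial_\xi^\alpha\partial_{V_\parallel^i}$; the $\xi$-derivatives only touch the Gaussian and can be absorbed by replacing $e^{-|\cdot|^2/4}$ with $e^{-q'|\cdot|^2/4}$ for any $q'<1$, so the whole analysis reduces to estimating the $V_\parallel$-derivative. This produces three contributions depending on whether the derivative falls on the Gaussian, on the prefactor $|V_\parallel|^{\gamma-2}$, or on the angular function $\Theta(|V_\perp|/|V_\parallel|)$.

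The first two contributions are easy: the Gaussian-derivative contributes a polynomial in $\xi,V_\parallel,V_\perp$ that is absorbed into the weakened Gaussian, while differentiating the prefactor costs an extra power $|V_\parallel|^{-1}$. The main obstacle is the third term, where $\Theta$ has a potentially singular derivative (this term vanishes in the hard sphere case). To handle it, I would exploit the homogeneity identity
\[
\partial_{V_\parallel^i}\Theta\bigl(|V_\perp|/|V_\parallel|\bigr) = -(V_\perp\cdot\nabla_{V_\perp})\Theta\bigl(|V_\perp|/|V_\parallel|\bigr)\, V_\parallel^i/|V_\parallel|^2,
\]
transferring the derivative to $V_\perp$ and then integrating by parts in the planar integral over $\{V_\perp\perp V_\parallel\}$. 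The result is a boundary-free expression in which the divergence $(V_\perp\cdot\nabla_{V_\perp})$ acts on the Gaussian and yields the tame factor $(1+|\xi_\perp|)$ coming from $(\xi+V_\perp)\cdot V_\perp$, together with an additional multiple of the original integral.

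At this stage all three contributions have been reduced to integrals of the form
\[
\int_{V_\perp\perp V_\parallel} \frac{e^{-\frac{q'}{4}(|\xi+V_\parallel+V_\perp|^2+|\xi+V_\perp|^2)}}{(|V_\parallel|^2+|V_\perp|^2)^{(1-\gamma)/2}}\, d^2V_\perp
\]
multiplied by polynomial prefactors in $|V_\parallel|$ and $1+|\xi_\perp|$. The last step is to decouple the $V_\parallel$- and $V_\perp$-dependence in the exponent via the algebraic identity
\[
|\xi+V_\parallel+V_\perp|^2 + |\xi+V_\perp|^2 = 2|V_\perp+\xi_\perp|^2 + 2\bigl|\xi_\parallel + V_\parallel/2\bigr|^2 + \tfrac{1}{2}|V_\parallel|^2,
\]
which lets me pull the $V_\parallel$-dependent factor $e^{-\frac{q'}{2}|\xi_\parallel + V_\parallel/2|^2 - \frac{q'}{8}|V_\parallel|^2}$ outside the integral. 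The remaining planar Gaussian integral centered at $-\xi_\perp$ is then bounded by Proposition~5.2 of \cite{[Caflisch]} by $C(1+|V_\parallel|+|\xi_\perp|)^{-(1-\gamma)}$. Combining with the $|V_\parallel|^{-1}$ prefactor in the no-$V_\parallel$-derivative case, or the $(1+|\xi_\perp|)|V_\parallel|^{-2}$ prefactor in the $V_\parallel^i$-derivative case, yields the two stated bounds.
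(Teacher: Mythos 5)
Your proposal is correct and follows essentially the same route as the paper: the same three-way split of the $V_\parallel$-derivative (Gaussian, power prefactor, angular factor $\Theta$), the same homogeneity identity converting $\partial_{V_\parallel^i}\Theta$ into a $V_\perp$-derivative, the same integration by parts in the perpendicular plane producing the $(1+|\xi_\perp|)$ factor plus a multiple of the original integral, the same decoupling identity for the exponent, and the same appeal to Proposition~5.2 of Caflisch to close the estimate. No gaps; this is the paper's argument, accurately reconstructed.
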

Note that $V_{\parallel }=\eta -\xi $, $\xi _{\perp }=\frac{1}{2}\left( \eta
+\xi \right) _{\bot }$, $\xi _{\parallel }+\frac{V_{\parallel }}{2}=\frac{1}{%
2}\left( \eta +\xi \right) _{\Vert }$, and thus
\begin{align*}
1+|V_{\parallel }|+\left\vert \xi _{\perp }\right\vert +\frac{1}{2}%
\left\vert \left( \eta +\xi \right) _{\Vert }\right\vert &=1+\left\vert
\eta -\xi \right\vert +\frac{1}{2}\left\vert \left( \eta +\xi \right) _{\bot
}\right\vert +\frac{1}{2}\left\vert \left( \eta +\xi \right) _{\Vert
}\right\vert \\
&\geq \left( 1+\left\vert \eta -\xi \right\vert ^{2}+\frac{1}{4}\left\vert
\left( \eta +\xi \right) _{\bot }\right\vert ^{2}+\frac{1}{4}\left\vert
\left( \eta +\xi \right) _{\Vert }\right\vert ^{2}\right) ^{1/2} \\
&\geq \frac{1}{2}\left( 1+\left\vert \eta \right\vert ^{2}+\left\vert \xi
\right\vert ^{2}\right) ^{1/2}\geq c\left( 1+\left\vert \eta \right\vert
+\left\vert \xi \right\vert \right) \text{.}
\end{align*}%
Since $k_{2}(\xi ,\eta )=\tilde{k}_{2}(\xi ,\eta -\xi )$, one can rephrase
the above estimates for $k_{2}$ as follows.

\begin{lemma}
For any $|\alpha |\geq 0$, $0<q^{\prime }<1$,%
\begin{equation*}
\left\vert \left( \partial _{\xi }+\partial _{\eta }\right) ^{\alpha
}k_{2}(\xi ,\eta )\right\vert \leq \frac{C}{|\eta -\xi |}\frac{e^{-\frac{%
q^{\prime }}{8}\frac{\left\vert \left\vert \eta \right\vert ^{2}-\left\vert
\xi \right\vert ^{2}\right\vert ^{2}}{|\eta -\xi |^{2}}-\frac{q^{\prime }}{8}%
|\eta -\xi |^{2}}}{(1+|\xi |+|\eta |)^{1-\gamma }}\text{,}
\end{equation*}%
\begin{equation*}
|\partial _{\xi }\left( \partial _{\xi }+\partial _{\eta }\right) ^{\alpha
}k_{2}(\xi ,\eta )|,|\partial _{\eta }\left( \partial _{\xi }+\partial
_{\eta }\right) ^{\alpha }k_{2}(\xi ,\eta )|\leq \frac{1+|\xi _{\perp }|}{%
|\eta -\xi |^{2}}\frac{e^{-\frac{q^{\prime }}{8}\frac{\left\vert \left\vert
\eta \right\vert ^{2}-\left\vert \xi \right\vert ^{2}\right\vert ^{2}}{|\eta
-\xi |^{2}}-\frac{q^{\prime }}{8}|\eta -\xi |^{2}}}{(1+|\xi |+|\eta
|)^{1-\gamma }}\text{.}
\end{equation*}%
where $|\xi _{\perp }|=\left\vert \xi \wedge \left( \eta -\xi \right)
\right\vert /\left\vert \eta -\xi \right\vert $.
\end{lemma}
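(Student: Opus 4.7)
The strategy is to translate the preceding estimates on $\tilde{k}_2(\xi, V_\parallel)$ into estimates on $k_2(\xi, \eta) = \tilde{k}_2(\xi, \eta - \xi)$ via the change of variable $V_\parallel = \eta - \xi$. By the chain rule,
\[
(\partial_\xi + \partial_\eta) k_2(\xi, \eta) = (\partial_\xi \tilde{k}_2)(\xi, V_\parallel)\big|_{V_\parallel = \eta - \xi},
\]
and more generally $(\partial_\xi + \partial_\eta)^\alpha k_2 = \partial_\xi^\alpha \tilde{k}_2|_{V_\parallel = \eta - \xi}$; on the other hand $\partial_\eta k_2 = \partial_{V_\parallel} \tilde{k}_2|_{V_\parallel = \eta - \xi}$ and $\partial_\xi k_2 = (\partial_\xi - \partial_{V_\parallel}) \tilde{k}_2|_{V_\parallel = \eta - \xi}$. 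The first bound of the lemma then follows immediately from the previous $\partial_\xi^\alpha \tilde{k}_2$ estimate. For the second, both $\partial_\xi (\partial_\xi + \partial_\eta)^\alpha k_2$ and $\partial_\eta (\partial_\xi + \partial_\eta)^\alpha k_2$ are linear combinations of $\partial_\xi^\alpha \tilde{k}_2$ and $\partial_\xi^\alpha \partial_{V_\parallel^i} \tilde{k}_2$, and the extra $\partial_\xi \tilde{k}_2$ contribution is dominated by the mixed-derivative bound because $|V_\parallel|^{-1} \lesssim (1+|\xi_\perp|)|V_\parallel|^{-2}$ in the regime $|V_\parallel| \leq 1 + |\xi_\perp|$, while in the complementary regime one may trade a factor of $|V_\parallel|$ for a constant against the Gaussian $e^{-\frac{q'}{8}|V_\parallel|^2}$ at the cost of sharpening $q'$ slightly.

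Next I would rewrite the exponent and the polynomial weight in the $(\xi, \eta)$ variables. With $V_\parallel = \eta - \xi$ and $\xi_\parallel = \bigl((\xi \cdot V_\parallel)/|V_\parallel|^2\bigr) V_\parallel$, one computes $(\xi + \eta) \cdot V_\parallel = |\eta|^2 - |\xi|^2$, whence
\[
\xi_\parallel + \tfrac{1}{2} V_\parallel = \frac{(\xi + \eta)\cdot V_\parallel}{2 |V_\parallel|^2} V_\parallel, \qquad \Bigl|\xi_\parallel + \tfrac{1}{2} V_\parallel\Bigr|^2 = \frac{(|\eta|^2 - |\xi|^2)^2}{4|\eta - \xi|^2}.
\]
Therefore $\frac{q'}{2}|\xi_\parallel + V_\parallel/2|^2 + \frac{q'}{8}|V_\parallel|^2$ becomes exactly $\frac{q'}{8}(|\eta|^2 - |\xi|^2)^2/|\eta - \xi|^2 + \frac{q'}{8}|\eta - \xi|^2$, matching the target Gaussian factor. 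The polynomial weight conversion $1 + |V_\parallel| + |\xi_\perp| \gtrsim 1 + |\xi| + |\eta|$ is precisely the calculation already carried out in the paragraph preceding the lemma, so no additional work is required for this factor. Finally, the identification $|\xi_\perp| = |\xi \wedge (\eta - \xi)|/|\eta - \xi|$ agrees with the definition of $\xi_\perp$ used throughout the Hilbert--Carleman computation.

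The main obstacle, if any, is purely bookkeeping: one must keep track of which combinations of $\partial_\xi$ and $\partial_{V_\parallel}$ appear in the translated derivatives and verify that each of them is controlled by the available $\tilde{k}_2$ estimates with the correct weight. The substantive analytic content has already been placed in the preceding $\tilde{k}_2$ lemma; the present statement merely repackages those bounds in coordinates symmetric in $(\xi, \eta)$, which is the form needed to compose $K$ with the transport semigroup $\mathbb{S}^t$ in the Enhanced Mixture Lemma.
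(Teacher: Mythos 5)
Your proposal is correct and follows the same route as the paper, which at this point merely remarks that since $k_2(\xi,\eta)=\tilde k_2(\xi,\eta-\xi)$ one can "rephrase" the $\tilde k_2$ estimates. You supply the chain-rule bookkeeping the paper leaves implicit—identifying $(\partial_\xi+\partial_\eta)^\alpha k_2 = \partial_\xi^\alpha\tilde k_2$, $\partial_\eta(\partial_\xi+\partial_\eta)^\alpha k_2 = \partial_{V_\parallel}\partial_\xi^\alpha\tilde k_2$, and $\partial_\xi(\partial_\xi+\partial_\eta)^\alpha k_2 = \partial_\xi^{\alpha+1}\tilde k_2 - \partial_{V_\parallel}\partial_\xi^\alpha\tilde k_2$—and correctly flag the one genuine wrinkle: the pure $\partial_\xi^{\alpha+1}\tilde k_2$ term carries only the weaker $|V_\parallel|^{-1}$ prefactor, which must be upgraded to $(1+|\xi_\perp|)|V_\parallel|^{-2}$ by splitting on whether $|V_\parallel|\le 1$ (where the bound is automatic) or $|V_\parallel|>1$ (where the extra $|V_\parallel|$ is swallowed by the Gaussian $e^{-q'|V_\parallel|^2/8}$ at the cost of a slightly smaller $q'$, which is harmless since the statement is for arbitrary $0<q'<1$). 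Your identity $|\xi_\parallel+V_\parallel/2|^2 = (|\eta|^2-|\xi|^2)^2/(4|\eta-\xi|^2)$ confirms the exponent, and the weight lower bound $1+|V_\parallel|+|\xi_\perp|\gtrsim 1+|\xi|+|\eta|$ is exactly the displayed computation in the paper preceding the lemma.
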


Combining the estimates for the operator $K_{1}$ in Section \ref{K1-Estimate}
and following the similar argument as that in \cite{[Caflisch]}, we have the
following estimates for $K=-K_{1}+K_{2}$.

\begin{proposition}
\label{prop: Derivatives of kernel k}The integral operator $K$ can be
expressed as%
\begin{equation*}
Kf=\int_{\mathbb{R}^{3}}k\left( \xi ,\eta \right) f\left( \eta \right) d\eta
\text{,}
\end{equation*}%
where the kernel $k\left( \xi ,\eta \right) $ satisfies
\begin{equation*}
\left\vert k\left( \xi ,\eta \right) \right\vert \leq \frac{C}{|\eta -\xi |}%
\frac{e^{-\frac{q^{\prime }}{8}\frac{\left\vert \left\vert \eta \right\vert
^{2}-\left\vert \xi \right\vert ^{2}\right\vert ^{2}}{|\eta -\xi |^{2}}-%
\frac{q^{\prime }}{8}|\eta -\xi |^{2}}}{(1+|\xi |+|\eta |)^{1-\gamma }}\text{%
,}
\end{equation*}%
for any $0<q^{\prime }<1$. Moreover, for any $\left\vert \alpha \right\vert
\geq 0$, $0<q^{\prime }<1$,%
\begin{equation*}
\left\vert \left( \partial _{\xi }+\partial _{\eta }\right) ^{\alpha }k(\xi
,\eta )\right\vert \leq \frac{C}{|\eta -\xi |}\frac{e^{-\frac{q^{\prime }}{8}%
\frac{\left\vert \left\vert \eta \right\vert ^{2}-\left\vert \xi \right\vert
^{2}\right\vert ^{2}}{|\eta -\xi |^{2}}-\frac{q^{\prime }}{8}|\eta -\xi
|^{2}}}{(1+|\xi |+|\eta |)^{1-\gamma }}=:H^{\left( 0\right) }\left( \xi
,\eta \right)
\end{equation*}%
\begin{align*}
|\partial _{\xi }\left( \partial _{\xi }+\partial _{\eta }\right) ^{\alpha
}k(\xi ,\eta )|\text{, }|\partial _{\eta }\left( \partial _{\xi }+\partial
_{\eta }\right) ^{\alpha }k(\xi ,\eta )| &\leq \frac{1+|\xi _{\perp }|}{%
|\eta -\xi |^{2}}\frac{e^{-\frac{q^{\prime }}{8}\frac{\left\vert \left\vert
\eta \right\vert ^{2}-\left\vert \xi \right\vert ^{2}\right\vert ^{2}}{|\eta
-\xi |^{2}}-\frac{q^{\prime }}{8}|\eta -\xi |^{2}}}{(1+|\xi |+|\eta
|)^{1-\gamma }} \\
&=:H^{(1)}\left( \xi ,\eta \right) \text{.}
\end{align*}%
where $|\xi _{\perp }|=\left\vert \xi \wedge \left( \eta -\xi \right)
\right\vert /\left\vert \eta -\xi \right\vert $.
\end{proposition}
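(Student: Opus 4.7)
The plan is to assemble the claimed bounds on $k = -k_1 + k_2$ by combining the explicit estimates on $k_1$ from Subsection \ref{K1-Estimate} with the $\tilde{k}_2$-estimates of the immediately preceding Lemma. Since the unadorned bound on $|k(\xi,\eta)|$ is the $\alpha = 0$ case of the second displayed inequality, it suffices to verify the two estimates involving $\alpha$-derivatives, and then to check that the $k_1$-contribution is dominated by the $k_2$-contribution.

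For the $k_1$-piece, I would work in the variables $u = \xi + \eta$ and $v = \eta - \xi$. The pointwise bounds $|\partial_u^{\beta_1}\partial_v^{\beta_2} k_1| \lesssim |v|^{\gamma - |\beta_2|} e^{-(|u|^2+|v|^2)/9}$ from Subsection \ref{K1-Estimate} combined with $\partial_\xi + \partial_\eta = 2\partial_u$ imply that $(\partial_\xi+\partial_\eta)^\alpha$ introduces no factor of $|v|^{-1}$, while a single extra $\partial_\xi$ or $\partial_\eta$ contributes at most one $|v|^{-1} = |\eta-\xi|^{-1}$. The identities $|\xi+\eta|^2 + |\eta-\xi|^2 = 2(|\xi|^2 + |\eta|^2)$ and $||\eta|^2 - |\xi|^2|^2 \leq |\eta-\xi|^2 |\xi+\eta|^2$ then show that the resulting full Gaussian dominates the weaker Gaussian in $H^{(0)}$ and $H^{(1)}$, and the polynomial factor $(1+|\xi|+|\eta|)^{-(1-\gamma)}$ is trivially absorbed.

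For the $k_2$-piece, I would pass from $\tilde{k}_2(\xi,V_\parallel)$ to $k_2(\xi,\eta) = \tilde{k}_2(\xi, \eta - \xi)$ via the chain rule: $\partial_\xi + \partial_\eta$ acts only on the $\xi$-slot of $\tilde{k}_2$, while one additional $\partial_\xi$ or $\partial_\eta$ produces at most one $\partial_{V_\parallel}$. With $V_\parallel = \eta - \xi$ the identity
\[
\bigl|\xi_\parallel + V_\parallel/2\bigr|^2 = \frac{(|\eta|^2 - |\xi|^2)^2}{4|\eta - \xi|^2}
\]
and the bound $1 + |V_\parallel| + |\xi_\perp| \gtrsim 1 + |\xi| + |\eta|$ already recorded in the excerpt convert the $\tilde{k}_2$-bounds verbatim into the Gaussian-times-polynomial form of $H^{(0)}$ and $H^{(1)}$, while the closing formula $|\xi_\perp| = |\xi \wedge (\eta - \xi)|/|\eta-\xi|$ is immediate from $\xi_\perp = \xi - (\xi \cdot V_\parallel / |V_\parallel|^2)V_\parallel$.

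The main analytical difficulty in this program is not the present consolidation, which is essentially bookkeeping, but rather the $\partial_{V_\parallel}$-derivative estimate on $\tilde{k}_2$ proved in the preceding Lemma via the integration-by-parts identity $\partial_{V_\parallel^i}\Theta(|V_\perp|/|V_\parallel|) = -(V_\perp\cdot\nabla_{V_\perp})\Theta(|V_\perp|/|V_\parallel|) V_\parallel^i/|V_\parallel|^2$; in the proof of the present Proposition this is simply invoked as a black box.
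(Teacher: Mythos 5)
Your proposal is correct and follows the paper's own route: the proposition is stated in the paper as a direct consolidation of the $k_1$-derivative bounds from Subsection~\ref{K1-Estimate} and the $\tilde k_2$-derivative lemma immediately preceding, with the bookkeeping deferred to the Caflisch reference. The chain-rule observation that $\partial_\xi + \partial_\eta$ acts only on the first slot of $\tilde k_2$, the identity $|\xi_\parallel + V_\parallel/2| = \bigl||\eta|^2 - |\xi|^2\bigr|/(2|\eta-\xi|)$, and the absorption of the $k_1$ contribution (via $|u|^2+|v|^2 = 2(|\xi|^2+|\eta|^2)$ and Cauchy--Schwarz for $(u\cdot v)^2 \leq |u|^2|v|^2$) into the weaker $H^{(0)}$, $H^{(1)}$ Gaussians are exactly the consolidation steps the paper has in mind.
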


The following lemma is helpful for the proof of the Enhanced Mixture
Lemma:
\begin{lemma}[Proposition 5.3, \cite{[Caflisch]}]
\label{lem-Caflisch}For any $\varsigma >-3$ and any $a>0$, $b>0$, there is a
constant $c>0$ (depending on $\varsigma $, $a$, $b$) so that%
\begin{equation*}
\int_{\mathbb{R}^{3}}\left\vert \xi -\eta \right\vert ^{\varsigma }\exp
\left\{-a\left\vert \xi -\eta \right\vert ^{2}-b\frac{\left\vert \left\vert
\eta \right\vert ^{2}-\left\vert \xi \right\vert ^{2}\right\vert ^{2}}{|\eta
-\xi |^{2}}\right\}d\eta \leq c\left( 1+\left\vert \xi \right\vert \right)
^{-1}
\end{equation*}%
for any $\xi \in \mathbb{R}^{3}$.
\end{lemma}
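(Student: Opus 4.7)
The plan is to translate the integration variable, pass to spherical coordinates aligned with $\xi$, and extract the factor $1/|\xi|$ from a bounded-range Gaussian integral over the polar angle.

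First, I would set $v=\eta-\xi$. Since $|\eta|^{2}-|\xi|^{2}=2\xi\cdot v+|v|^{2}$ and $|\eta-\xi|=|v|$, the integral becomes
\begin{equation*}
I(\xi)=\int_{\mathbb{R}^{3}}|v|^{\varsigma}\exp\left\{-a|v|^{2}-b\,\frac{(2\xi\cdot v+|v|^{2})^{2}}{|v|^{2}}\right\}dv.
\end{equation*}
For $\xi\neq 0$ I would then use spherical coordinates $v=r\omega$ with polar axis along $\hat{\xi}$, so that $\xi\cdot v=r|\xi|\cos\theta$ and the crucial ratio collapses to the perfect square $(2|\xi|\cos\theta+r)^{2}$. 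Integrating out the azimuth, substituting $u=\cos\theta$, and then $s=2|\xi|u+r$, the polar-angle integral obeys
\begin{equation*}
\int_{-1}^{1}e^{-b(2|\xi|u+r)^{2}}du \le \frac{1}{2|\xi|}\int_{\mathbb{R}}e^{-bs^{2}}ds = \frac{1}{2|\xi|}\sqrt{\pi/b},
\end{equation*}
uniformly in $r>0$.

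Inserting this uniform bound gives
\begin{equation*}
I(\xi)\le \frac{\pi\sqrt{\pi/b}}{|\xi|}\int_{0}^{\infty}r^{\varsigma+2}e^{-ar^{2}}dr\lesssim \frac{1}{|\xi|},
\end{equation*}
where the $r$-integral converges near the origin because $\varsigma>-3$ (so $\varsigma+2>-1$) and at infinity because of the Gaussian. For $|\xi|\le 1$ I would discard the positive $b$-term entirely, yielding the crude estimate $I(\xi)\le \int_{\mathbb{R}^{3}}|v|^{\varsigma}e^{-a|v|^{2}}dv<\infty$, again by $\varsigma>-3$. Combining the two regimes produces the claimed bound $I(\xi)\le c(1+|\xi|)^{-1}$.

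There is no genuine obstacle in this argument: the entire gain comes from the algebraic identity making the exponent a perfect square in $\cos\theta$ whose linear coefficient is exactly $2|\xi|$, so the one-dimensional Gaussian integral contributes precisely the factor $1/|\xi|$. The only things to double check are the convergence condition $\varsigma>-3$, which is used exactly once (to guarantee the radial integral at $r=0$), and the harmless extension of the $s$-range from $[-2|\xi|+r,2|\xi|+r]$ to all of $\mathbb{R}$.
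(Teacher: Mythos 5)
Your argument is correct. A small bookkeeping check: with $v=\eta-\xi$ one indeed has $|\eta|^2-|\xi|^2 = |v|^2 + 2\,\xi\cdot v$, and in spherical coordinates with axis along $\hat\xi$ the exponent collapses to $-a r^2 - b(2|\xi|\cos\theta+r)^2$, so after the azimuthal $2\pi$ and the Jacobian $r^2$ the substitution $s=2|\xi|u+r$ yields the uniform bound
\begin{equation*}
\int_{-1}^{1} e^{-b(2|\xi|u+r)^2}\,du \;\le\; \frac{1}{2|\xi|}\int_{\mathbb{R}} e^{-bs^2}\,ds,
\end{equation*}
and the surviving radial integral $\int_0^\infty r^{\varsigma+2}e^{-ar^2}\,dr$ is finite precisely because $\varsigma>-3$. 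The split at $|\xi|\le 1$, where you simply drop the $b$-term and use $(1+|\xi|)^{-1}\ge \tfrac12$, then glues the two regimes into $c(1+|\xi|)^{-1}$.

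Since the paper cites this lemma verbatim from Caflisch (Proposition 5.3 there) and reproduces no proof, there is nothing in-paper to compare against; your computation is the natural one and is, to my recollection, essentially the argument given in Caflisch's paper. The only stylistic point worth noting is that your bound after step 5 reads $I(\xi)\lesssim |\xi|^{-1}$, which only covers $|\xi|\gtrsim 1$ — you do handle the small-$|\xi|$ range separately, but the final sentence should make explicit that the two regimes are combined to produce $c(1+|\xi|)^{-1}$ for all $\xi$, which you have done.
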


\section{Linearized Problem} \label{sec:linear}

We write the linearized Boltzmann equation as

\begin{equation*}
	\partial _{t}h+\xi \cdot \nabla _{x}h+\nu (\xi )h=Kh,\quad \left.
	h\right\vert _{t=0}=h_{0}\text{.}
\end{equation*}%
A Picard-type iteration for extracting the initial singularity is designed by
\begin{equation*}
	\partial _{t}h^{(0)}+\xi \cdot \nabla _{x}h^{(0)}+\nu (\xi )h^{(0)}=0,\qquad
	h^{(0)}\big|_{t=0}=h_{0},
\end{equation*}%
\begin{equation*}
	\partial _{t}h^{(m)}+\xi \cdot \nabla _{x}h^{(m)}+\nu (\xi
	)h^{(m)}=Kh^{(m-1)},\qquad h^{(m)}\big|_{t=0}=0\quad \mbox{ for }m\geq 1.
\end{equation*}%
The solution can be written down explicitly as
\begin{equation*}
	h^{(0)}(t,x,\xi )=\mathbb{S}^{t}h_{0}\left( x,\xi \right) ,
\end{equation*}%
\begin{equation*}
	h^{(1)}\left( t,x,\xi \right) =\int_{0}^{t}\mathbb{S}^{t-t_{1}}Kh^{(0)}%
	\left( t_{1}\right) dt_{1}=\int_{0}^{t}\mathbb{S}^{t-t_{1}}K\mathbb{S}%
	^{t_{1}}h_{0}dt_{1},
\end{equation*}

\begin{equation*}
	h^{(m)}(t,x,\xi )=\int_{0}^{t}\int_{0}^{t_{1}}\cdots \int_{0}^{t_{m-1}}%
	\mathbb{S}^{t-t_{1}}K\mathbb{S}^{t_{1}-t_{2}}K\cdots \mathbb{S}%
	^{t_{m-1}-t_{m}}K\mathbb{S}^{t_{m}}h_{0}dt_{1}dt_{2}\cdots dt_{m}
\end{equation*}

We introduce the following mixture operator

\begin{align}
	\mathbf{M}_{N}^{t}h_{0}(x,\xi )& :=\left( \int_{0}^{t}\int_{0}^{t_{1}}\cdots
	\int_{0}^{t_{N-1}}\right) \mathbb{S}^{t-t_{1}}K\mathbb{S}^{t_{1}-t_{2}}K%
	\cdots \mathbb{S}^{t_{N-1}-t_{N}}K\mathbb{S}^{t_{N}}h_{0}dt_{1}dt_{2}\cdots
	dt_{N}  \notag \\
	& =\int_{T_{N}}\int_{\Xi _{N}}e^{-\sum_{j=0}^{N}\nu (\xi
		_{j})(t_{j}-t_{j+1})}\left[ \prod_{j=0}^{N-1}k(\xi _{j},\xi _{j+1})\right]
	h_{0}(x-\sum_{i=0}^{N}\xi _{i}(t_{i}-t_{i+1}),\xi _{N})\text{,}\notag
\end{align}%
for $N\geq 1$, and $\mathbf{M}_{0}^{t}h_{0}(x,\xi ):=\mathbb{S}%
^{t}h_{0}\left( x,\xi \right) $, where $\Xi _{N}=\mathbb{R}^{3N}$, $t_{N+1}=0$. Then in
terms of the mixture operator,
\begin{equation*}
	h^{(m)}\left( t,x,\xi \right) =\mathbf{M}_{m}^{t}h_{0}\text{, }\quad m\geq 0%
	\text{.}
\end{equation*}

The solution to the linearized Boltzmann equation is formally written
\begin{equation*}
	h\left( t,x,\xi \right) =\sum_{k=0}^{\infty }h^{(k)}\left( t,x,\xi \right)
	=\left( \sum_{k=0}^{\infty }\mathbf{M}_{k}^{t}\right) h_{0}\text{.}
\end{equation*}%
However, the convergence is not justified. In what follows, we will truncate
the summation at some $N$, and define the singular part to be first several
terms, which contain the most singular terms. The remainder is called a
regular part, which satisfies the linearized Boltzmann equation with source
term (given by $Kh^{(N)}$). Specifically, we define
\begin{equation*}  
	\mathcal{W}^{\left( N\right) }=\sum_{k=0}^{N}h^{\left( k\right) }\text{,}%
	\qquad \mathcal{R}^{\left( N\right) }=h-\mathcal{W}^{\left( N\right) }\text{.%
	}
\end{equation*}%
We call $\mathcal{W}^{\left( N\right) }$ the singular part and $\mathcal{R}%
^{\left( N\right) }$ the remainder, with $\mathcal{R}^{\left( N\right) }$ solving
the linearized Boltzmann equation with source $Kh^{(N)}$,
\begin{equation*}
	\left\{
	\begin{array}{l}
		\partial _{t}\mathcal{R}^{\left( N\right) }+\xi \cdot \nabla _{x}\mathcal{R}%
		^{\left( N\right) }-L\mathcal{R}^{\left( N\right) }=Kh^{(N)}\text{,}%
		\vspace {3mm}
		\\
		\left. \mathcal{R}^{\left( N\right) }\right\vert _{t=0}=0\text{.}%
	\end{array}%
	\right.
\end{equation*}%
The number $N$ will be determined later.

In the next subsection, we will prove after sufficient mixture, namely, as $%
N $ becomes sufficiently large, $h^{(N)}$ will become as good as we please.
Therefore, we are able to apply many tools such as spectral analysis, higher
order energy estimate, weighted energy estimates to analyze the remainder.

\subsection{Singularity extraction and Enhanced Mixture Lemma}

In this subsection, we show $h^{(N)}$ will become as good as we please. The
main result is stated in Lemma \ref{Enhanced Mixture Lemma}. In the
following we shall discuss the weighted $L_{\xi }^{2}L_{x}^{2}$-estimate and
the pointwise structure for $h^{\left( 0\right) }$ and $h^{\left( N\right) }$
$\left( N\geq 1\right) $ in order.

\begin{lemma}
	Let $Q\geq 0$. Then
	\begin{equation*}
		\left\Vert \left( 1+\left\vert \xi \right\vert \right) ^{Q}\mathbf{M}%
		_{0}^{t}h_{0}(x,\xi )\right\Vert _{L_{\xi }^{2}L_{x}^{2}}\leq e^{-\nu
			_{0}t}\left\Vert \left( 1+\left\vert \xi \right\vert \right)
		^{Q}h_{0}\right\Vert _{L_{\xi }^{2}L_{x}^{2}}\text{, }\forall Q\geq 0\text{.}
	\end{equation*}%
	Furthermore, if $h_{0}\left( x,\xi \right) =0$ for $\left\vert x\right\vert
	>1$, $\xi \in \mathbb{R}^{3}$, then%
	\begin{equation*}
		\left\vert \mathbf{M}_{0}^{t}h_{0}(x,\xi )\right\vert _{L_{\xi }^{\infty
		}}\leq Ce^{-\frac{\nu \left( \xi \right) t}{2}}\left( 1+\left\vert
		x\right\vert \right) ^{-M}\left( 1+\left\vert \xi \right\vert \right)
		^{-P}\left\Vert h_{0}\right\Vert _{L_{\xi ,Q}^{\infty }L_{x}^{\infty }}
	\end{equation*}%
	where $M,P$ are non-negative with $M\left( 1-\gamma \right) +P=Q$.
\end{lemma}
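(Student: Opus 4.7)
Both assertions follow immediately from the explicit formula
\[
\mathbf{M}_0^t h_0(x,\xi) = \mathbb{S}^t h_0(x,\xi) = e^{-\nu(\xi)t}\, h_0(x-\xi t,\xi),
\]
together with the pointwise lower bound $\nu(\xi) \geq \nu_0 \langle \xi\rangle^\gamma \geq \nu_0$ from Lemma \ref{lemma-colli-freq} (valid since $\gamma \geq 0$ and $\langle \xi\rangle \geq 1$).

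For the weighted $L^2$ bound, I would simply use that the shift $x\mapsto x-\xi t$ is an isometry on $L^2_x$, so Fubini gives
\[
\bigl\|(1+|\xi|)^Q \mathbb{S}^t h_0\bigr\|_{L^2_\xi L^2_x}^2
= \int_{\mathbb{R}^3} e^{-2\nu(\xi)t}(1+|\xi|)^{2Q} \|h_0(\cdot,\xi)\|_{L^2_x}^2\, d\xi,
\]
and pull out $e^{-2\nu_0 t}$ via $\nu(\xi)\geq \nu_0$.

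For the pointwise estimate, split $e^{-\nu(\xi)t} = e^{-\nu(\xi)t/2}\cdot e^{-\nu(\xi)t/2}$, reserving the first factor for the right-hand side. Bound $|h_0(x-\xi t,\xi)| \lesssim (1+|\xi|)^{-Q}\|h_0\|_{L^\infty_{\xi,Q}L^\infty_x}$ and use the spatial support of $h_0$ to restrict to $|x-\xi t|\leq 1$, which forces $1+|x|\leq 2(1+|\xi|t)$. The key algebraic step is the elementary inequality
\[
1 + |\xi|t \leq (1+|\xi|)^{1-\gamma}\bigl(1+t(1+|\xi|)^\gamma\bigr),
\]
which, raised to the $M$-th power and combined with the balance $M(1-\gamma)+P=Q$, gives
\[
(1+|x|)^M(1+|\xi|)^P \lesssim (1+|\xi|)^Q\bigl(1+t(1+|\xi|)^\gamma\bigr)^M.
\]
The remaining half-exponential satisfies $\nu(\xi)t/2 \gtrsim t(1+|\xi|)^\gamma$, so with $y := t(1+|\xi|)^\gamma$ the standard bound $(1+y)^M e^{-cy}\lesssim_M 1$ absorbs the polynomial factor. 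Recombining produces the claimed decay.

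There is no serious obstacle: everything reduces to the explicit transport formula. The only moderately delicate point is selecting the right trade-off between spatial and velocity weights, embodied in the identity $M(1-\gamma)+P=Q$. This is precisely the weight-loss phenomenon discussed in the introduction for hard potentials, and is what distinguishes the present estimate from the hard-sphere case where $\gamma=1$ collapses the identity to $P=Q$.
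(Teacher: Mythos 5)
Your proof is correct and follows essentially the same strategy as the paper: split $e^{-\nu(\xi)t}$ in half, reserve one half, use the transport-induced support constraint to trade velocity weight for spatial decay via the balance $M(1-\gamma)+P=Q$, and absorb the leftover polynomial factor with the remaining exponential. Your algebraic packaging through the inequality $1+|\xi|t \leq (1+|\xi|)^{1-\gamma}\bigl(1+t(1+|\xi|)^{\gamma}\bigr)$ applied to $1+|x|\leq 2(1+|\xi|t)$ is a small but genuine simplification over the paper's change of variable $y=x-\xi t$ (with $|\xi|=|x-y|/t$), since it handles all $x$ uniformly and avoids the paper's separate treatment of the regions $|x|\leq 2$ and $|x|\geq 2$.
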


\begin{proof}
By definition , $\mathbf{M}_{0}^{t}h_{0}(x,\xi ):=\mathbb{S}^{t}h_{0}\left(
x,\xi \right) $. The first part is readily obtained.

Now assume that $h_{0}\left( x,\xi \right) =0$ for $\left\vert x\right\vert
>1$, $\xi \in \mathbb{R}^{3}$. Consider the case $\left\vert x\right\vert
\geq 2$. Letting $y=x-\xi t$ gives%
\begin{align*}
	&\left\vert \mathbf{M}_{0}^{t}h_{0}(x,\xi )\right\vert \\
	&\leq e^{-\frac{\nu \left( \xi \right) t}{2}}e^{-\frac{\nu _{0}}{2}\left(
		1+\left\vert \xi \right\vert \right) ^{\gamma }t}\left( 1+\left\vert \xi
	\right\vert \right) ^{-(Q-P)}\left( 1+\left\vert \xi \right\vert \right)
	^{-P}\left\vert \left( 1+\left\vert \xi \right\vert \right) ^{Q}h_{0}\left(
	x-\xi t,\xi \right) \right\vert \\
	&=e^{-\frac{\nu \left( \xi \right) t}{2}}e^{-\frac{\nu _{0}}{2}\left(
		t+\left\vert x-y\right\vert \right) ^{\gamma }t^{1-\gamma }}\left(
	t+\left\vert x-y\right\vert \right) ^{-(Q-P)}t^{Q-P}\left( 1+\left\vert \xi
	\right\vert \right) ^{-P}\left\vert \left( 1+\left\vert \xi \right\vert
	\right) ^{Q}h_{0}\left( y,\xi \right) \right\vert \\
	&\leq e^{-\frac{\nu \left( \xi \right) t}{2}}\sup_{\left\vert y\right\vert
		\leq 1}e^{-\frac{\nu _{0}}{2}\left\vert x-y\right\vert ^{\gamma }t^{1-\gamma
	}}\left( \left\vert x-y\right\vert ^{\gamma }t^{1-\gamma }\right) ^{\frac{Q-P%
		}{1-\gamma }}\left\vert x-y\right\vert ^{-\frac{Q-P}{1-\gamma }}\left(
	1+\left\vert \xi \right\vert \right) ^{-P}\left\vert \left( 1+\left\vert \xi
	\right\vert \right) ^{Q}h_{0}\left( y,\xi \right) \right\vert \\
	&\leq Ce^{-\frac{\nu \left( \xi \right) t}{2}}(1+|x|)^{-%
		\frac{Q-P}{1-\gamma }}\left( 1+\left\vert \xi \right\vert \right)
	^{-P}\left\Vert h_{0}\right\Vert _{L_{\xi ,Q}^{\infty }L_{x}^{\infty }} \\
	&=Ce^{-\frac{\nu \left( \xi \right) t}{2}}(1+|x|)^{-M}\left( 1+\left\vert \xi \right\vert \right) ^{-P}\left\Vert
	h_{0}\right\Vert _{L_{\xi ,Q}^{\infty }L_{x}^{\infty }}
\end{align*}%
where $M=\frac{Q-P}{1-\gamma }$, $M$, $P\geq 0$ with $M\left( 1-\gamma
\right) +P=Q$. In the case $\left\vert x\right\vert \leq 2$, it is easy to
see that%
\begin{equation*}
	\left\vert \mathbb{S}^{t}h_{0}\left( x,\xi \right) \right\vert \leq
	3^{M}e^{-\nu \left( \xi \right) t}\left( 1+\left\vert x\right\vert \right)
	^{-M}\left( 1+\left\vert \xi \right\vert \right) ^{-P}\left\Vert
	h_{0}\right\Vert _{L_{\xi ,Q}^{\infty }L_{x}^{\infty }}
\end{equation*}%
for any $M\geq 0$, and $0\leq P\leq Q$. This completes the proof.
\end{proof}

\begin{lemma}[Enhanced Mixture Lemma]
	\label{Enhanced Mixture Lemma}$%
	\begin{array}{c}
	\end{array}%
	$
	
	\begin{enumerate}
		\item For any $\alpha ,\beta \in \mathbb{N}_{0}^{3}$, any $P\geq 0$, when
		\begin{equation*}
			N\geq \max \left\{ P+\frac{|\alpha |+|\beta |+|\alpha |\gamma }{2},3\left(
			|\alpha |+|\beta |\right) \right\} \text{,}
		\end{equation*}%
		we have
		\begin{equation*}
			\left\Vert \left( 1+|\xi |\right) ^{P}\partial _{x}^{\alpha }\partial _{\xi
			}^{\beta }\mathbf{M}_{N}^{t}h_{0}\right\Vert _{L_{\xi }^{2}L_{x}^{2}}\leq
			Ce^{-ct}\left\Vert h_{0}\right\Vert _{L_{\xi }^{2}L_{x}^{2}}\text{.}
		\end{equation*}
		
		\item Suppose $h_{0}\left( x,\xi \right) =0$ for $|x|>1$, we have for any $%
		\alpha ,\beta \in \mathbb{N}_{0}^{3}$, any $M$, $P\geq 0$, any $Q\in \mathbb{R}$,
		when
		\begin{equation*}
			N\geq \max \left\{ \frac{P+\left( |\alpha |+|\beta |+|\alpha |\gamma \right)
				+M\left( 1-\gamma \right) -Q}{2}\text{, }3\left( |\alpha |+|\beta |\right)
			\right\} \text{,}
		\end{equation*}%
		we have
		\begin{equation*}
			\left\vert \partial _{x}^{\alpha }\partial _{\xi }^{\beta }\mathbf{M}%
			_{N}^{t}h_{0}\left( t,x,\xi \right) \right\vert \leq Ce^{-c_{0}\nu (\xi
				)t}\left( 1+|x|\right) ^{-M}\left( 1+|\xi |\right) ^{-P}\left\Vert \left(
			1+|\xi |\right) ^{Q}h_{0}\right\Vert _{L_{x,\xi }^{\infty }}\text{.}
		\end{equation*}
	\end{enumerate}
\end{lemma}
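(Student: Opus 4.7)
Both parts rest on the explicit integral representation of $\mathbf{M}_N^t h_0$ combined with the derivative bounds of Proposition \ref{prop: Derivatives of kernel k}, the bound $|\partial_\xi^\alpha \nu(\xi)| \lesssim \langle\xi\rangle^{\gamma - |\alpha|}$ from \eqref{nu-deriv}, and the Caflisch-type integration Lemma \ref{lem-Caflisch}. The central observation is that differentiating $h_0(x - \sum_{i=0}^{N}\xi_i(t_i - t_{i+1}),\xi_N)$ with respect to $x_k$ equals $-(t_i - t_{i+1})^{-1}\partial_{\xi_{i,k}}$ of the same quantity, for \emph{any} choice of intermediate $i \in \{0,\ldots,N-1\}$. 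Integration by parts in $\xi_i$ then moves the derivative onto the product $k(\xi_{i-1},\xi_i)\,k(\xi_i,\xi_{i+1})\,e^{-\nu(\xi_i)(t_i - t_{i+1})}$. The singular prefactor $(t_i - t_{i+1})^{-1}$ is defused by partitioning the time simplex $T_N$ according to which gap $t_j - t_{j+1}$ is largest; on each subregion the chosen gap is at least $t/(N+1)$, and the surrounding exponentials $e^{-\nu_0(t_j - t_{j+1})}$ absorb the resulting polynomial growth in $t$, producing an overall factor $e^{-ct}$.

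\textbf{Proof of (1).} After converting all $|\alpha|$ spatial derivatives into $\xi_i$-derivatives on kernels, and distributing the $|\beta|$ microscopic derivatives via the Leibniz rule, one arrives at an integrand that is a product of derivatives of $k$ and $\nu$ times $h_0(y,\xi_N)$. Apply Cauchy--Schwarz in $x$ and $\xi_N$ to pull out $\Vert h_0\Vert_{L^2_{x,\xi}}$, then iterate the Schur-type bound $\int |k(\xi,\eta)|\langle\eta\rangle^\tau\,d\eta \lesssim \langle\xi\rangle^{\tau + \gamma - 2}$ at each remaining level $\xi_{N-1},\ldots,\xi_1$. Each integration gains $\langle\xi\rangle^{\gamma-2}$, or $\langle\xi\rangle^{\gamma-1}$ when a derivative has fallen on that kernel copy (by Proposition \ref{prop: Derivatives of kernel k}). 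The hypothesis $N \geq P + (|\alpha|+|\beta|+|\alpha|\gamma)/2$ covers the outer weight $(1+|\xi|)^P$ plus the differentiation cost, while $N \geq 3(|\alpha|+|\beta|)$ ensures enough \emph{distinct} intermediate levels --- up to three kernel slots per $x$-derivative are consumed by the Leibniz rule applied to $k(\xi_{i-1},\xi_i)k(\xi_i,\xi_{i+1})e^{-\nu(\xi_i)(t_i-t_{i+1})}$.

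\textbf{Proof of (2).} The pointwise bound is obtained by estimating the integrand directly. Since $h_0(y,\xi_N) = 0$ for $|y|>1$, whenever $|x|\geq 2$ the displacement $\sum_i \xi_i(t_i-t_{i+1})$ has magnitude $\gtrsim |x|$; by pigeonhole, some $i$ satisfies $|\xi_i|(t_i-t_{i+1}) \gtrsim |x|/(N+1)$. Raising to the $M$-th power, $(1+|\xi_i|)^{M(1-\gamma)}\bigl[\nu(\xi_i)(t_i-t_{i+1})\bigr]^M \gtrsim |x|^M$, and absorbing the $M$ factors of $\nu(\xi_i)(t_i-t_{i+1})$ into $e^{-c\nu(\xi_i)(t_i-t_{i+1})}$ extracts the spatial decay $(1+|x|)^{-M}$ at the cost of $M(1-\gamma)$ units of microscopic weight on $\xi_i$ --- the same mechanism as in \eqref{eq:tr-decay}. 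The initial weight $(1+|\xi_N|)^{-Q}$ from $h_0$ contributes $Q$ units in the opposite direction. Derivatives are handled as in Part (1), and the remaining $\xi_i$-integrals are bounded using Lemma \ref{lem-Caflisch}. The condition $2N \geq P + |\alpha|+|\beta|+|\alpha|\gamma + M(1-\gamma) - Q$ records the net balance, and the factor $3$ again ensures enough distinct mixture slots.

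\textbf{Main obstacle.} The principal difficulty is the simultaneous bookkeeping of (i) assigning each $x$-derivative a distinct intermediate $\xi_i$ whose time gap is bounded below by $t/(N+1)$, (ii) expanding the Leibniz rule across the long product of $N$ kernels and $N+1$ transport exponentials without spawning cross-terms that spoil the $e^{-ct}$ factor, and (iii) tracking how many powers of $\langle\xi\rangle$ are produced by derivatives on $k$ and $\nu$ versus consumed by the iterated Schur estimate and by Lemma \ref{lem-Caflisch}. All ingredients needed are in place, but their deployment forces the cushion factor of $3$ in the hypothesis on $N$.
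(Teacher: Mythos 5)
Your proposal identifies the right ingredients (the explicit integral representation, Proposition~\ref{prop: Derivatives of kernel k}, Lemma~\ref{lem-Caflisch}, converting $x$-derivatives to $\xi_i$-derivatives and integrating by parts), but the central mechanism you propose for handling the Jacobian factor does not close. You write $\partial_{x_k}h_0 = -(t_i-t_{i+1})^{-1}\partial_{\xi_{i,k}}h_0$ and then plan to defuse the singular prefactor by partitioning $T_N$ according to which gap $s_j = t_j - t_{j+1}$ is largest. This works for a single $x$-derivative, but for $m = |\alpha|+|\beta| \geq 2$ you need $m$ \emph{distinct} intermediate indices $i_1,\dots,i_m$, and by pigeonhole only one gap is guaranteed to be at least $t/(N+1)$; the remaining gaps you are forced to use have no pointwise lower bound. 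The raw simplex integral $\int_{T_N}\prod_{k}(t_{i_k}-t_{i_k+1})^{-1}$ with \emph{fixed} distinct indices diverges (e.g.\ $\int_0^{t_1}ds/s$). Choosing adaptively the $m$ largest gaps does make the time integral converge by exploiting the ordering constraint on the simplex, but then the chosen indices can be adjacent, and adjacent choices put the combination $\partial_{\xi_i}\partial_{\xi_{i+1}}$ on the \emph{same} kernel $k(\xi_i,\xi_{i+1})$ — a derivative that cannot be bounded by Proposition~\ref{prop: Derivatives of kernel k}, which controls $(\partial_\xi+\partial_\eta)^\alpha k$ plus at most one pure $\partial_\xi$ or $\partial_\eta$. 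You cannot simultaneously enforce (a) time-integrability, (b) distinctness, and (c) at-most-one-derivative-per-kernel with the single-gap device.

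The paper's proof avoids this entirely by a different identity: since $\partial_{\xi_j}h_0 = -(t_j-t_{j+1})\partial_x h_0$ for each $j$, one can write $\partial_x h_0 = -\dfrac{\partial_{\xi_j}+\partial_{\xi_{j-1}}}{t_{j-1}-t_{j+1}}\,h_0$, with a \emph{two-gap} denominator $t_{j-1}-t_{j+1} = s_{j-1}+s_j$. The simplex integral $\int 1/(s_{j-1}+s_j)\,ds_{j-1}\,ds_j$ is finite (a $1/r$ singularity in two dimensions), so no partitioning of $T_N$ is needed, and the bound $\int_{T_N}\frac{\prod_{j=1}^\ell(t_j-t_{j+1})}{\prod_{j=0}^{m-1}(t_{N-3j-2}-t_{N-3j})}\lesssim t^{N-|\alpha|}$ follows. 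The derivatives are then placed at the pairs $\{\xi_{N-3j-1},\xi_{N-3j-2}\}$, spaced three apart so that after integration by parts each $k(\xi_i,\xi_{i+1})$ and each $e^{-\nu(\xi_i)(\cdot)}$ carries at most one pure $\xi$-derivative. This two-gap device together with the spacing-of-3 is precisely the missing idea in your proposal, and it is what produces the stated threshold $N\geq 3(|\alpha|+|\beta|)$. Finally, to obtain the polynomial velocity weight $(1+|\xi|)^P$ in part (1) and the pointwise decay in part (2), the paper also splits $\Xi_N$ into the regions $\xi_{\max}\geq 2\xi_{\min}$ versus $\xi_{\max}\leq 2\xi_{\min}$, using \eqref{D1-xi} to get Gaussian decay on the first and $|\xi_j|\sim|\xi|$ plus Lemma~\ref{lem-Caflisch} on the second; your iterated-Schur argument for part (1) implicitly gives a comparable gain of $\langle\xi\rangle^{\gamma-2}$ per level, but does not account for the time factor $t^{N-|\alpha|}$ that must be absorbed into $e^{-c\nu(\xi)t}$ at the cost of $(1+|\xi|)^{-(N-|\alpha|)\gamma}$, which is where the $|\alpha|\gamma$ in the threshold comes from.
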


\begin{proof}
By definition of $\mathbf{M}_{N}^{t}$,
\begin{align*}
	\mathbf{M}_{N}^{t}h_{0}(x,\xi )& :=\left( \int_{0}^{t}\int_{0}^{t_{1}}\cdots
	\int_{0}^{t_{N-1}}\right) \mathbb{S}^{t-t_{1}}K\mathbb{S}^{t_{1}-t_{2}}K%
	\cdots \mathbb{S}^{t_{N-1}-t_{N}}K\mathbb{S}^{t_{N}}h_{0}dt_{1}\cdots dt_{N}
	\\
	& =\int_{T_{N}}\int_{\Xi _{N}}e^{-\sum_{j=0}^{N}\nu (\xi
		_{j})(t_{j}-t_{j+1})}\left[ \prod_{j=0}^{N-1}k(\xi _{j},\xi _{j+1})\right]
	h_{0}(x-\sum_{i=0}^{N}\xi _{i}(t_{i}-t_{i+1}),\xi _{N})\text{.}
\end{align*}%
Let $\ell =|\beta |$, $m=|\alpha |+|\beta |$. We have

\begin{align*}
	& \partial _{x}^{\alpha }\partial _{\xi }^{\beta }\mathbf{M}%
	_{N}^{t}h_{0}(x,\xi ) \\
	=& \sum_{\beta _{1}+\beta _{2}+\beta _{3}=\beta }C\int_{T_{N}}\int_{\Xi
		_{N}}e^{-\sum_{j=1}^{N}\nu (\xi _{j})(t_{j}-t_{j+1})}\prod_{j=1}^{N-1}k(\xi
	_{j},\xi _{j+1}) \\
	& \partial _{\xi }^{\beta _{1}}\left[ e^{-\nu (\xi )(t-t_{1})}\right]
	\partial _{\xi }^{\beta _{2}}\left[ k\left( \xi ,\xi _{1}\right) \right]
	\partial _{\xi }^{\beta _{3}}\partial _{x}^{\alpha
	}h_{0}(x-\sum_{i=0}^{N}\xi _{i}(t_{i}-t_{i+1}),\xi _{N}).
\end{align*}%
From (\ref{nu-deriv}), $\partial _{\xi }^{\beta _{1}}\left[ e^{-\nu (\xi
	)(t-t_{1})}\right] $ satisfies similar estimates as the one without
differentiation. We only consider two most interesting terms,

\begin{align*}
	\left( I\right) & =\int_{T_{N}}\int_{\Xi _{N}}e^{-\sum_{j=0}^{N}\nu (\xi
		_{j})(t_{j}-t_{j+1})}\partial _{\xi }^{\beta }\left[ k\left( \xi ,\xi
	_{1}\right) \right]  \prod_{j=1}^{N-1}k(\xi _{j},\xi _{j+1})\partial _{x}^{\alpha
	}h_{0}(x-\sum_{i=0}^{N}\xi _{i}(t_{i}-t_{i+1}),\xi _{N})\text{,}
\end{align*}%
and
\begin{equation*}
	\left( II\right) =\int_{T_{N}}\int_{\Xi _{N}}e^{-\sum_{j=0}^{N}\nu (\xi
		_{j})(t_{j}-t_{j+1})}\left[ \prod_{j=0}^{N-1}k(\xi _{j},\xi _{j+1})\right]
	\partial _{\xi }^{\beta }\partial _{x}^{\alpha }h_{0}(x-\sum_{i=0}^{N}\xi
	_{i}(t_{i}-t_{i+1}),\xi _{N})\text{.}
\end{equation*}

Look at $\left( I\right) $ first. For $\partial _{\xi }^{\beta }\left[
k\left( \xi ,\xi _{1}\right) \right] $, we write
\begin{align*}
	\partial _{\xi }^{\beta }\left[ k\left( \xi ,\xi _{1}\right) \right] &
	=\left( \partial _{\xi }+\partial _{\xi _{1}}-\partial _{\xi _{1}}\right)
	^{\beta }k\left( \xi ,\xi _{1}\right) \\
	& =\sum_{\beta ^{\prime }+\beta ^{\prime \prime }=\beta }C\left( \partial
	_{\xi }+\partial _{\xi _{1}}\right) ^{\beta ^{\prime \prime }}\partial _{\xi
		_{1}}^{\beta ^{\prime }}k\left( \xi ,\xi _{1}\right) .
\end{align*}%
In view of Proposition \ref{prop: Derivatives of kernel k}, it suffices to
consider the term $\partial _{\xi _{1}}^{\beta }k\left( \xi ,\xi _{1}\right)
$ among all the terms in the summation. One keeps only one $\partial _{\xi
	_{1}}$ on $k\left( \xi ,\xi _{1}\right) $, and uses integration by parts to
transfer the other $\xi _{1}$-derivatives to the rest terms, then the
integrand looks like
\begin{align*}
	& e^{-\sum_{j=0}^{N}\nu (\xi _{j})(t_{j}-t_{j+1})}\partial _{\xi
		_{1}}^{\varsigma _{1}}k\left( \xi ,\xi _{1}\right) \partial _{\xi
		_{1}}^{\beta _{1}}k\left( \xi _{1},\xi _{2}\right) \\
	& \cdot \left[ \prod_{j=2}^{N-1}k(\xi _{j},\xi _{j+1})\right] \partial _{\xi
		_{1}}^{\beta _{2}}\partial _{x}^{\alpha }h_{0}(x-\sum_{i=0}^{N}\xi
	_{i}(t_{i}-t_{i+1}),\xi _{N})
\end{align*}%
where $\beta _{1}+\beta _{2}+\varsigma _{1}=\beta $ and $|\varsigma _{1}|=1$%
. If $|\beta _{1}|>1$, one can repeat this argument, namely, replaces $%
\partial _{\xi _{1}}^{\beta _{1}}k\left( \xi _{1},\xi _{2}\right) $ by $%
\left( \partial _{\xi _{1}}+\partial _{\xi _{2}}-\partial _{\xi _{2}}\right)
^{\beta _{1}}k\left( \xi _{1},\xi _{2}\right) $, expands the binomial, keeps
only one $\xi _{2}$-derivative on $k\left( \xi _{1},\xi _{2}\right) $ and
uses integration by parts again to transfer other $\xi _{2}$-derivatives to
the rest terms. Continuing this process for $|\beta |$ times, we distribute
the $|\beta |$-order $\xi $-derivatives evenly on $\xi _{j}$,$\ j=1$,$\ldots
$, $|\beta |$, and eventually obtain the integrands having the following
forms (neglecting other easier terms)
\begin{align*}
	& e^{-\sum_{j=0}^{N}\nu (\xi _{j})(t_{j}-t_{j+1})}\left[ \prod_{j=1}^{\ell
	}\partial _{\xi _{j}}^{\varsigma _{j}^{\prime }}k\left( \xi _{j-1},\xi
	_{j}\right) \right] \left[ \prod_{j=\ell +1}^{N}k\left( \xi _{j-1},\xi
	_{j}\right) \right] \\
	\cdot & \left[ \prod_{j=1}^{\ell }\partial _{\xi _{j}}^{\varsigma
		_{j}^{\prime \prime }}\right] \left\{ \partial _{x}^{\alpha
	}h_{0}(x-\sum_{i=0}^{N}\xi _{i}(t_{i}-t_{i+1}),\xi _{N})\right\} \text{,}
\end{align*}%
with $\varsigma _{j}^{\prime }+\varsigma _{j}^{\prime \prime }=\varsigma
_{j} $ and $|\varsigma _{j}|=1$. Without loss of generality, we only
illustrate how to handle the following two terms,
\begin{align*}
	\left( I_{1}\right) & =\int_{T_{N}}\int_{\Xi _{N}}e^{-\sum_{j=0}^{N}\nu (\xi
		_{j})(t_{j}-t_{j+1})}\left[ \prod_{j=1}^{\ell }\partial _{\xi
		_{j}}^{\varsigma _{j}}k\left( \xi _{j-1},\xi _{j}\right) \right] \left[
	\prod_{j=\ell +1}^{N}k\left( \xi _{j-1},\xi _{j}\right) \right] \\
	& \quad \times \partial _{x}^{\alpha }h_{0}(x-\sum_{i=0}^{N}\xi
	_{i}(t_{i}-t_{i+1}),\xi _{N})\text{,}
\end{align*}%
and
\begin{align*}
	\left( I_{2}\right) & =\int_{T_{N}}\int_{\Xi _{N}}e^{-\sum_{j=0}^{N}\nu (\xi
		_{j})(t_{j}-t_{j+1})}\left[ \prod_{j=0}^{N-1}k\left( \xi _{j},\xi
	_{j+1}\right) \right] \\
	& \quad \times \left[ \prod_{j=1}^{\ell }\partial _{\xi _{j}}^{\varsigma
		_{j}}\right] \partial _{x}^{\alpha }h_{0}(x-\sum_{i=0}^{N}\xi
	_{i}(t_{i}-t_{i+1}),\xi _{N})\text{.}
\end{align*}%
Note the latter one is similar to $\left( II\right) $.

Let us focus on $\left( I_{2}\right) $ and the estimates of other terms can
be easily adapted from this. Our strategy is to express the $x$-derivatives
of $h_{0}\left( x-\sum_{i=0}^{N}\xi _{i}(t_{i}-t_{i+1}),\xi _{N}\right) $ in
terms of $\xi _{j}$-derivatives, use integration by parts to assign them to
kernel functions $k\left( \xi _{j},\xi _{j+1}\right) $, and absorb them by
the good properties of $k\left( \xi _{j},\xi _{j+1}\right) $.

Note
\begin{align*}
	& \left[ \prod_{j=1}^{\ell }\partial _{\xi _{j}}^{\varsigma _{j}}\right]
	\left\{ \partial _{x}^{\alpha }h_{0}(x-\sum_{i=0}^{N}\xi
	_{i}(t_{i}-t_{i+1}),\xi _{N})\right\} \\
	& =\left( -1\right) ^{\ell }\left[ \prod_{j=1}^{\ell }(t_{j}-t_{j+1})\right]
	\partial _{x}^{\alpha +\beta }h_{0}\bigl( x-\sum_{i=0}^{N}\xi
	_{i}(t_{i}-t_{i+1}),\xi _{N}\bigr) \text{.}
\end{align*}%
Further, noticing that
\begin{equation*}
	\partial _{\xi _{j}}h_{0}\bigl( x-\sum_{i=0}^{N}\xi _{i}(t_{i}-t_{i+1}),\xi
	_{N}\bigr) =-\left( t_{j}-t_{j+1}\right) \partial _{x}h_{0}\bigl(
	x-\sum_{i=0}^{N}\xi _{i}(t_{i}-t_{i+1}),\xi _{N}\bigr) \text{,}\quad j=1%
	\text{,}\dots \text{, }N-1\text{,}
\end{equation*}%
it gives
\begin{equation*}
	\partial _{x}h_{0}\bigl( x-\sum_{i=0}^{N}\xi _{i}(t_{i}-t_{i+1}),\xi
	_{N}\bigr) =-\frac{\partial _{\xi _{j}}+\partial _{\xi _{j-1}}}{%
		t_{j-1}-t_{j+1}}h_{0}\bigl( x-\sum_{i=0}^{N}\xi _{i}(t_{i}-t_{i+1}),\xi
	_{N}\bigr) \text{,}\ j=2\text{,}\dots \text{, }N-1\text{.}
\end{equation*}%
Denoting $\alpha +\beta =\sigma $, $m=|\sigma |$, we decompose $\sigma
=\sum_{j=0}^{m-1}\sigma _{j}$ with each $\sigma _{j}\in \mathbb{N}_{0}^{3}$
and $|\sigma _{j}|=1$, and write
\begin{align*}
	& \partial _{x}^{\alpha +\beta }h_{0}\bigl( x-\sum_{i=0}^{N}\xi
	_{i}(t_{i}-t_{i+1}),\xi _{N}\bigr) \\
	& =(-1)^{m}\left( \frac{\partial _{\xi _{N-1}}+\partial _{\xi _{N-2}}}{%
		t_{N-2}-t_{N}}\right) ^{\sigma _{0}}\left( \frac{\partial _{\xi
			_{N-4}}+\partial _{\xi _{N-5}}}{t_{N-5}-t_{N-3}}\right) ^{\sigma _{1}}\cdots
	\left( \frac{\partial _{N-3(m-1)-1}+\partial _{N-3(m-1)-2}}{%
		t_{N-3(m-1)-2}-t_{N-3(m-1)}}\right) ^{\sigma _{m-1}}h_{0}\text{.}
\end{align*}%
It therefore follows from integration by parts that
\begin{align*}
	\left( I_{2}\right) & =\int_{T_{N}}\int_{\Xi _{N}}\Biggl\{
	e^{-\sum_{j=0}^{N}\nu (\xi _{j})(t_{j}-t_{j+1})}\left[ \prod_{j=0}^{N-1}k%
	\left( \xi _{j},\xi _{j+1}\right) \right] \frac{\left( -1\right) ^{|\alpha
			|}\prod_{j=1}^{\ell }(t_{j}-t_{j+1})}{\prod_{j=0}^{m-1}\left(
		t_{N-3j-2}-t_{N-3j}\right) } \\
	& \quad  \cdot \prod_{j=0}^{m-1}\left( \partial _{\xi
		_{N-3j-1}}+\partial _{\xi _{N-3j-2}}\right) ^{\sigma _{j}}h_{0}\bigl(
	x-\sum_{i=0}^{N}\xi _{i}(t_{i}-t_{i+1}),\xi _{N}\bigr) \Biggr\} \\
	& =\int_{T_{N}}\int_{\Xi _{N}}\Biggl\{ \frac{\left( -1\right) ^{|\alpha
			|}\prod_{j=1}^{\ell }(t_{j}-t_{j+1})}{\prod_{j=0}^{m-1}\left(
		t_{N-3j-2}-t_{N-3j}\right) }h_{0}\bigl( x-\sum_{i=0}^{N}\xi
	_{i}(t_{i}-t_{i+1}),\xi _{N}\bigr)  \\
	& \quad  \cdot \prod_{j=0}^{m-1}\left( \partial _{\xi _{N-3j-1}}+\partial
	_{\xi _{N-3j-2}}\right) ^{\sigma _{j}}\left( e^{-\sum_{j=0}^{N}\nu (\xi
		_{j})(t_{j}-t_{j+1})}\left[ \prod_{j=0}^{N-1}k\left( \xi _{j},\xi
	_{j+1}\right) \right] \right) \Biggr\} \text{.}
\end{align*}%
Thanks to our arrangement of velocity derivatives, in the above expression,
each $k\left( \xi _{j},\xi _{j+1}\right) $ bears at most one derivative,
which is still integrable. It follows from Proposition \ref{prop:
	Derivatives of kernel k} that
\begin{align*}
	& \left\vert \prod_{j=0}^{m-1}\left( \partial _{\xi _{N-3j-1}}+\partial
	_{\xi _{N-3j-2}}\right) ^{\sigma _{j}}\left( e^{-\sum_{j=0}^{N}\nu (\xi
		_{j})(t_{j}-t_{j+1})}\left[ \prod_{j=0}^{N-1}k\left( \xi _{j},\xi
	_{j+1}\right) \right] \right) \right\vert \\
	& \leq Ce^{-q^{\prime }\sum_{j=0}^{N}\nu (\xi _{j})(t_{j}-t_{j+1})}\left[
	\prod_{j=0}^{N-1}H^{(\varsigma _{j})}(\xi _{j},\xi _{j+1})\right] \text{,}
\end{align*}%
and
\begin{equation*}
	\prod_{j=0}^{N-1}H^{(\varsigma _{j})}(\xi _{j},\xi _{j+1})\sim
	\prod_{j=0}^{N-1}\frac{e^{-\frac{q^{\prime }}{8}\frac{\left( |\xi
				_{j}|^{2}-|\xi _{j+1}|^{2}\right) ^{2}}{|\xi _{j}-\xi _{j+1}|^{2}}-\frac{%
				q^{\prime }}{8}\left\vert \xi _{j}-\xi _{j+1}\right\vert ^{2}}}{|\xi
		_{j}-\xi _{j+1}|\left( 1+|\xi _{j}|+|\xi _{j+1}|\right) ^{1-\gamma }}%
	\prod_{j=0}^{N-1}\left( \frac{1+|\xi _{j}|}{|\xi _{j}-\xi _{j+1}|}\right)
	^{\varsigma _{j}}\text{,}
\end{equation*}%
where $\varsigma _{j}\in \left\{ 0,1\right\} $ with $\sum_{j=0}^{N-1}%
\varsigma _{j}=m$, and $H^{(0)}$, $H^{(1)}$ are defined in Proposition \ref%
{prop: Derivatives of kernel k}. Therefore,
\begin{align*}
	\left\vert \left( I_{2}\right) \right\vert & \leq C\int_{T_{N}}\int_{\Xi
		_{N}}\Biggl\{ e^{-q^{\prime }\sum_{j=0}^{N}\nu (\xi _{j})(t_{j}-t_{j+1})}%
	\Bigl[\prod_{j=0}^{N-1}H^{(\varsigma _{j})}(\xi _{j},\xi _{j+1})\Bigr]
	\\
	& \quad \cdot \frac{\prod_{j=1}^{\ell }(t_{j}-t_{j+1})}{\prod_{j=0}^{m-1}%
		\left( t_{N-3j-2}-t_{N-3j}\right) }\Bigl\vert h_{0}\bigl(
	x-\sum_{i=0}^{N}\xi _{i}(t_{i}-t_{i+1}),\xi _{N}\bigr) \Bigr\vert \Biggr\}
	\text{.}
\end{align*}

Now we will derive two types of estimates for $\left( I_{2}\right) $: the
first one is concerning the weighted $L_{\xi }^{2}L_{x}^{2}$ norm, while the
second one is concerning its pointwise estimate assuming $h_{0}$ is
compactly supported in $x$-variable. As for the first one, applying
Cauchy-Schwartz inequality, we have
\begin{align*}
	\left\vert \left( I_{2}\right) \right\vert & \leq C\left[ \int_{T_{N}}\int_{%
		\Xi _{N}}e^{-q^{\prime }\sum_{j=0}^{N}\nu (\xi _{j})(t_{j}-t_{j+1})}\Bigl[%
	\prod_{j=0}^{N-1}H^{(\varsigma _{j})}(\xi _{j},\xi _{j+1})\Bigr]\frac{%
		\prod_{j=1}^{\ell }(t_{j}-t_{j+1})}{\prod_{j=0}^{m-1}\left(
		t_{N-3j-2}-t_{N-3j}\right) }\right] ^{1/2} \\
	& \qquad \cdot \Biggl[ \int_{T_{N}}\int_{\Xi _{N}}e^{-q^{\prime
		}\sum_{j=0}^{N}\nu (\xi _{j})(t_{j}-t_{j+1})}\Bigl[\prod_{j=0}^{N-1}H^{(%
		\varsigma _{j})}(\xi _{j},\xi _{j+1})\Bigr] \\
	& \qquad \frac{\prod_{j=1}^{\ell }(t_{j}-t_{j+1})}{\prod_{j=0}^{m-1}%
		\left( t_{N-3j-2}-t_{N-3j}\right) }\Bigl\vert h_{0}\bigl(
	x-\sum_{i=0}^{N}\xi _{i}(t_{i}-t_{i+1}),\xi _{N}\bigr) \Bigr\vert ^{2}%
	\Biggr] ^{1/2}\text{.}
\end{align*}

To estimate the first integral, we divide the integral domain $\int_{\Xi
	_{N}}=\int_{\mathbb{R}^{3N}}d\xi _{1}d\xi _{2}\cdots d\xi _{N}$ into two
parts:
\begin{equation*}
	\mathbf{D}_{1}=\left\{ \left( \xi _{1},\xi _{2},\cdots ,\xi _{N}\right) \in
	\mathbb{R}^{3N}\Big|\,\xi _{\max }\geq 2\xi _{\min }\right\} \text{,}\quad
	\mathbf{D}_{2}=\left\{ \left( \xi _{1},\xi _{2},\cdots ,\xi _{N}\right) \in
	\mathbb{R}^{3N}\Big|\,\xi _{\max }\leq 2\xi _{\min }\right\} \text{,}
\end{equation*}%
where $\xi _{\max }:=\max \left\{ |\xi _{0}|,|\xi _{1}|,\cdots ,|\xi
_{N}|\right\} $, $\xi _{\min }:=\min \left\{ |\xi _{0}|,|\xi _{1}|,\cdots
,|\xi _{N}|\right\} $. On $\mathbf{D}_{1}$, from the inequality
\begin{equation}
	\sum_{j=0}^{N-1}|\xi _{j}-\xi _{j+1}|^{2}\geq c|\xi _{\max }-\xi _{\min
	}|^{2}\geq c|\xi _{\max }|^{2}\geq c|\xi |^{2}\text{,}  \label{D1-xi}
\end{equation}%
and the estimates of $H^{(\varsigma _{j})}$, we find
\begin{align*}
	& \int_{T_{N}}\int_{\Xi _{N}}e^{-q^{\prime }\sum_{j=0}^{N}\nu (\xi
		_{j})(t_{j}-t_{j+1})}\Bigl[\prod_{j=0}^{N-1}H^{(\varsigma _{j})}(\xi
	_{j},\xi _{j+1})\Bigr]\frac{\prod_{j=1}^{\ell }(t_{j}-t_{j+1})}{%
		\prod_{j=0}^{m-1}\left( t_{N-3j-2}-t_{N-3j}\right) } \\
	& \leq Ce^{-c\left( t+|\xi |^{2}\right) }\int_{T_{N}}\frac{\prod_{j=1}^{\ell
		}(t_{j}-t_{j+1})}{\prod_{j=0}^{m-1}\left( t_{N-3j-2}-t_{N-3j}\right) } \\
	& \leq Ce^{-c_{0}\left( t+|\xi |^{2}\right) }\text{.}
\end{align*}%
On $\mathbf{D}_{2}$, using the relation $|\xi _{j}|\sim |\xi |$ and Lemma %
\ref{lem-Caflisch} yields
\begin{align*}
	& \int_{T_{N}}\int_{\mathbf{D}_{2}}e^{-q^{\prime }\sum_{j=0}^{N}\nu (\xi
		_{j})(t_{j}-t_{j+1})}\Bigl[\prod_{j=0}^{N-1}H^{(\varsigma _{j})}(\xi
	_{j},\xi _{j+1})\Bigr]\frac{\prod_{j=1}^{\ell }(t_{j}-t_{j+1})}{%
		\prod_{j=0}^{m-1}\left( t_{N-3j-2}-t_{N-3j}\right) } \\
	& \leq  Ce^{-c\nu (\xi )t}\int_{T_{N}}\int_{\mathbf{D}_{2}}\left[ \frac{%
		\prod_{j=1}^{\ell }(t_{j}-t_{j+1})}{\prod_{j=0}^{m-1}\left(
		t_{N-3j-2}-t_{N-3j}\right) }\frac{\left( 1+|\xi |\right) ^{m}}{\left( 1+|\xi
		|\right) ^{N\left( 1-\gamma \right) }}\prod_{j=0}^{N-1}\frac{e^{-\frac{%
				q^{\prime }}{8}\frac{\left( |\xi _{j}|^{2}-|\xi _{j+1}|^{2}\right) ^{2}}{%
				|\xi _{j}-\xi _{j+1}|^{2}}-\frac{q^{\prime }}{8}\left\vert \xi _{j}-\xi
			_{j+1}\right\vert ^{2}}}{|\xi _{j}-\xi _{j+1}|^{2}}\right] \\
	& \leq  Ce^{-c\nu (\xi )t}\left( 1+|\xi |\right) ^{m-N\left( 1-\gamma \right)
		-N}\int_{T_{N}}\frac{\prod_{j=1}^{\ell }(t_{j}-t_{j+1})}{\prod_{j=0}^{m-1}%
		\left( t_{N-3j-2}-t_{N-3j}\right) } \\
	& \leq  Ce^{-c\nu (\xi )t}\left( 1+|\xi |\right) ^{m-N\left( 1-\gamma \right)
		-N}\frac{t^{N-m}}{\left( N-m\right) !}t^{\ell } \\
	& \leq  Ce^{-c\nu (\xi )t}\left( 1+|\xi |\right) ^{m-N\left(
		1-\gamma \right) -N}t^{N-|\alpha |} \\
	& = Ce^{-c\nu (\xi )t}\left[ \left( 1+|\xi |\right) ^{\gamma }t%
	\right] ^{N-|\alpha |}\left( 1+|\xi |\right) ^{m-N\left( 1-\gamma \right)
		-N-\left( N-|\alpha |\right) \gamma } \\
	& \leq  C e^{-c_{0}\nu (\xi )t}\left( 1+|\xi |\right)
	^{-2N+m+|\alpha |\gamma }\text{.}
\end{align*}%
Therefore,

\begin{align*}
	\left\vert \left( I_{2}\right) \right\vert ^{2}& \leq Ce^{-c_{0}\nu (\xi
		)t}\left( 1+|\xi |\right) ^{-2N+m+|\alpha |\gamma } \\
	& \quad  \int_{T_{N}}\int_{\Xi _{N}}e^{-q^{\prime }\sum_{j=0}^{N}\nu
		(\xi _{j})(t_{j}-t_{j+1})}\Bigl[\prod_{j=0}^{N-1}H^{(\varsigma _{j})}(\xi
	_{j},\xi _{j+1})\Bigr] \\
	& \quad \cdot \frac{\prod_{j=1}^{\ell }(t_{j}-t_{j+1})}{\prod_{j=0}^{m-1}\left(
		t_{N-3j-2}-t_{N-3j}\right) }\Bigl\vert h_{0}\bigl( x-\sum_{j=0}^{N}\xi
	_{j}(t_{j}-t_{j+1}),\xi _{N}\bigr) \Bigr\vert ^{2}.
\end{align*}%
Integrating over $\Xi _{N}$ and $T_{N}$, using $L^{2}$-boundedness of $K$
operator and integrability of time integral, we have
\begin{equation*}
	\left\Vert \left( 1+|\xi |\right) ^{N-\frac{m+|\alpha |\gamma }{2}}\left(
	I_{2}\right) \right\Vert _{L_{\xi }^{2}L_{x}^{2}}\leq Ce^{-ct}\left\Vert
	h_{0}\right\Vert _{L_{\xi }^{2}L_{x}^{2}}\text{.}
\end{equation*}%
The estimates for other terms, like $\left( II\right) $ are similar, so that
we can conclude
\begin{equation*}
	\left\Vert \left( 1+|\xi |\right) ^{N-\frac{|\alpha |+|\beta |+|\alpha
			|\gamma }{2}}\partial _{x}^{\alpha }\partial _{\xi }^{\beta }\mathbf{M}%
	_{N}^{t}h_{0}\right\Vert _{L_{\xi }^{2}L_{x}^{2}}\leq Ce^{-ct}\left\Vert
	h_{0}\right\Vert _{L_{\xi }^{2}L_{x}^{2}}\quad \mbox{for }N\geq 3\left(
	|\alpha |+|\beta |\right) \text{.}
\end{equation*}

In the rest part, we assume $h_{0}\left( x,\xi \right) =0$ for $|x|>1$, $\xi
\in \mathbb{R}^{3}$, and show the pointwise result. We still focus on $%
\left( I_{2}\right) $:

\begin{align*}
	\left\vert \left( I_{2}\right) \right\vert & \leq C\int_{T_{N}}\int_{\Xi
		_{N}}\Biggl\{ e^{-q^{\prime }\sum_{j=0}^{N}\nu (\xi _{j})(t_{j}-t_{j+1})}%
	\Bigl[\prod_{j=0}^{N-1}H^{(\varsigma _{j})}(\xi _{j},\xi _{j+1})\Bigr]
	\\
	& \quad \cdot \frac{\prod_{j=1}^{\ell }(t_{j}-t_{j+1})}{\prod_{j=0}^{m-1}%
		\left( t_{N-3j-2}-t_{N-3j}\right) }\Bigl\vert h_{0}\bigl(
	x-\sum_{i=0}^{N}\xi _{i}(t_{i}-t_{i+1}),\xi _{N}\bigr) \Bigr\vert \Biggr\}
	\text{.}
\end{align*}%
Because of the support of $h_{0}$, we only need to consider the integration
over
\begin{equation*}
	\mathscr{D}=\left( T_{N}\times \mathbb{R}^{3N}\right) \cap \left\{ \left(
	t_{1},t_{2},\cdots ,t_{N};\xi _{1},\xi _{2},\cdots ,\xi _{N}\right) \,\big|%
	\,|x-\sum_{i=0}^{N}\xi _{i}(t_{i}-t_{i+1})|<1\right\} .
\end{equation*}%
Again we decompose $\int_{\Xi _{N}}=\int_{\mathbb{R}^{3N}}d\xi _{1}d\xi
_{2}\cdots d\xi _{N}$ into $\mathbf{D}_{1}$ and $\mathbf{D}_{2}$ as above
and consider the integral on
\begin{equation*}
	\mathscr{D}\cap \left( T_{N}\times \mathbf{D}_{1}\right) \qquad \text{and}%
	\qquad \mathscr{D}\cap \left( T_{N}\times \mathbf{D}_{2}\right) \text{,}
\end{equation*}%
respectively.

On $\mathscr{D}\cap \left( T_{N}\times \mathbf{D}_{1}\right) $, in view of (%
\ref{D1-xi}),
\begin{align*}
	&e^{-q^{\prime }\sum_{j=0}^{N}\nu (\xi
		_{j})(t_{j}-t_{j+1})}\prod_{j=0}^{N-1}H^{(\varsigma _{j})}(\xi _{j},\xi
	_{j+1}) \\
	&\leq e^{-c(t+\xi _{\max }^{2})}\prod_{j=0}^{N-1}\left[ \frac{1}{|\xi
		_{j}-\xi _{j+1}|}\frac{e^{-\frac{q^{\prime }}{9}\frac{\left\vert \left\vert
				\xi _{j+1}\right\vert ^{2}-\left\vert \xi _{j}\right\vert ^{2}\right\vert
				^{2}}{|\xi _{j+1}-\xi _{j}|^{2}}-\frac{q^{\prime }}{9}|\xi _{j+1}-\xi
			_{j}|^{2}}}{(1+|\xi _{j}|+|\xi _{j+1}|)^{1-\gamma }}\left( \frac{1+|\xi _{j}|%
	}{|\xi _{j}-\xi _{j+1}|}\right) ^{\varsigma _{j}}\right] \text{.}
\end{align*}%
Letting $y=x-\sum_{i=0}^{N}\xi _{i}(t_{i}-t_{i+1})$ with $\left\vert
y\right\vert <1$, gives%
\begin{multline}
	t+\xi _{\max }^{2} \geq  \frac{t}{2}+\frac{1}{2}\left\vert \xi \right\vert ^{2}+\sqrt{\xi
		_{\max }t}-\frac{1}{2}\\
	\geq \frac{t}{2}+\frac{1}{2}\left\vert \xi \right\vert ^{2}+\sqrt{%
		\sum_{i=0}^{N}\left\vert \xi _{i}\right\vert \left( t_{i}-t_{i+1}\right) }-%
	\frac{1}{2} 	\geq \frac{t}{2}+\frac{1}{2}\left\vert \xi \right\vert ^{2}+\left\vert
	x\right\vert ^{1/2}-\frac{3}{2},\notag
\end{multline}
and thus
\begin{align*}
	& \iint_{\mathscr{D}\cap \left( T_{N}\times \mathbf{D}_{1}\right) }\Biggl\{
	e^{-q^{\prime }\sum_{j=0}^{N}\nu (\xi _{j})(t_{j}-t_{j+1})}\Bigl[%
	\prod_{j=0}^{N-1}H^{(\varsigma _{j})}(\xi _{j},\xi _{j+1})\Bigr] \\
	&  \quad \cdot \frac{\prod_{j=1}^{\ell }(t_{j}-t_{j+1})}{\prod_{j=0}^{m-1}%
		\left( t_{N-3j-2}-t_{N-3j}\right) }\Bigl\vert h_{0}\bigl(
	x-\sum_{i=0}^{N}\xi _{i}(t_{i}-t_{i+1}),\xi _{N}\bigr) \Bigr\vert \Biggr\}
	\\
	& \leq Ce^{-c_{0}\left( t+|\xi |^{2}+|x|^{1/2}\right) }\left\Vert e^{-\frac{%
			c\left\vert \xi \right\vert ^{2}}{4}}h_{0}\right\Vert _{L_{x,\xi }^{\infty }}%
	\text{.}
\end{align*}%

On $\mathscr{D}\cap \left( T_{N}\times \mathbf{D}_{2}\right) $, $\frac{1}{2}%
\left\vert \xi \right\vert \leq |\xi _{j}|\leq 2|\xi |$ for $j=1$,$\dots $, $%
N$, and
\begin{align*}
	&e^{-q^{\prime }\sum_{j=0}^{N}\nu (\xi _{j})(t_{j}-t_{j+1})}\Bigl[%
	\prod_{j=0}^{N-1}H^{(\varsigma _{j})}(\xi _{j},\xi _{j+1})\Bigr]\left(
	1+\left\vert \xi \right\vert \right) ^{-Q} \\
	&\leq e^{-c\nu \left( \xi \right) t}\left( 1+\left\vert \xi \right\vert
	\right) ^{-N\left( 1-\gamma \right) +m-Q}\prod_{j=0}^{N-1}\left[ \frac{1}{%
		|\xi _{j}-\xi _{j+1}|}e^{-\frac{q^{\prime }}{9}\frac{\left\vert \left\vert
			\xi _{j+1}\right\vert ^{2}-\left\vert \xi _{j}\right\vert ^{2}\right\vert
			^{2}}{|\xi _{j+1}-\xi _{j}|^{2}}-\frac{q^{\prime }}{9}|\xi _{j+1}-\xi
		_{j}|^{2}}\left( \frac{1}{|\xi _{j}-\xi _{j+1}|}\right) ^{\varsigma _{j}}%
	\right] \text{.}
\end{align*}%
Assume $\left\vert x\right\vert \geq 2$. Letting $y=x-\sum_{i=0}^{N}\xi
_{i}(t_{i}-t_{i+1})$, $\left\vert y\right\vert <1$, gives%
\begin{equation*}
	1+\left\vert \xi \right\vert \geq 1+\frac{1}{2t}\sum_{i=0}^{N}\left\vert \xi
	_{i}\right\vert \left( t_{i}-t_{i+1}\right) \geq \left( 1+\frac{1}{2t}%
	\left\vert x-y\right\vert \right) \geq \frac{1}{8}\left( 1+\left\vert
	x\right\vert \right) t^{-1}\text{,}
\end{equation*}%
on $\mathscr{D}\cap \left( T_{N}\times \mathbf{D}_{2}\right) $, so that for $%
M\geq 0$,%
\begin{align*}
	e^{-\frac{c}{2}\nu \left( \xi \right) t}\left( 1+\left\vert x\right\vert
	\right) ^{M} &\leq Ce^{-\frac{c^{\prime }}{2}\left( 1+\left\vert \xi
		\right\vert \right) ^{\gamma }t}\left( 1+\left\vert \xi \right\vert \right)
	^{M}t^{M} \\
	&=C^{\prime }e^{-\frac{c^{\prime }}{2}\left( 1+\left\vert \xi \right\vert
		\right) ^{\gamma }t}\left( \left( 1+\left\vert \xi \right\vert \right)
	^{\gamma }t\right) ^{M}\left( 1+\left\vert \xi \right\vert \right)
	^{-M\gamma }\left( 1+\left\vert \xi \right\vert \right) ^{M} \\
	&\leq C^{\prime \prime }\left( 1+\left\vert \xi \right\vert \right)
	^{M\left( 1-\gamma \right) }\text{.}
\end{align*}%
Together with Lemma \ref{lem-Caflisch}, it follows that for $\left\vert
x\right\vert \geq 2$,
\begin{align*}
	& \iint_{\mathscr{D}\cap \left( T_{N}\times \mathbf{D}_{2}\right) }\Biggl\{
	e^{-q^{\prime }\sum_{j=0}^{N}\nu (\xi _{j})(t_{j}-t_{j+1})}\Bigl[%
	\prod_{j=0}^{N-1}H^{(\varsigma _{j})}(\xi _{j},\xi _{j+1})\Bigr] \\
	& \qquad \cdot \frac{\prod_{j=1}^{\ell }(t_{j}-t_{j+1})}{\prod_{j=0}^{m-1}%
		\left( t_{N-3j-2}-t_{N-3j}\right) }\Bigl\vert h_{0}\bigl(
	x-\sum_{i=0}^{N}\xi _{i}(t_{i}-t_{i+1}),\xi _{N}\bigr) \Bigr\vert \Biggr\}
	\\
	& \leq Ce^{-\frac{c}{2}\nu \left( \xi \right) t}\left( 1+\left\vert \xi
	\right\vert \right) ^{-N\left( 1-\gamma \right) +m+M\left( 1-\gamma \right)
		-Q}\left( 1+\left\vert x\right\vert \right) ^{-M}\left\Vert \left( 1+|\xi
	|\right) ^{Q}h_{0}\right\Vert _{L_{x,\xi }^{\infty }}\left( 1+\left\vert \xi
	\right\vert \right) ^{-N}\frac{t^{N-m}t^{\ell }}{\left( N-m\right) !} \\
	& \leq C^{\prime }e^{-\frac{c}{2}\nu \left( \xi \right) t}\left(
	1+\left\vert \xi \right\vert \right) ^{-N\left( 1-\gamma \right) +m+M\left(
		1-\gamma \right) -N-Q}\left( 1+\left\vert x\right\vert \right)
	^{-M}t^{N-\left\vert \alpha \right\vert }\left\Vert \left( 1+|\xi |\right)
	^{Q}h_{0}\right\Vert _{L_{x,\xi }^{\infty }} \\
	& =C^{\prime }e^{-\frac{c}{2}\nu \left( \xi \right) t}\left( \left(
	1+\left\vert \xi \right\vert \right) ^{\gamma }t\right) ^{N-\left\vert
		\alpha \right\vert }\left( 1+\left\vert \xi \right\vert \right) ^{-N\left(
		1-\gamma \right) +m+M\left( 1-\gamma \right) -N-\left( N-\left\vert \alpha
		\right\vert \right) \gamma -Q}\left( 1+\left\vert x\right\vert \right)
	^{-M}\left\Vert \left( 1+|\xi |\right) ^{Q}h_{0}\right\Vert _{L_{x,\xi
		}^{\infty }} \\
	& \leq C^{\prime \prime }e^{-c_{0}\nu \left( \xi \right) t}\left(
	1+\left\vert \xi \right\vert \right) ^{-2N-Q+\left\vert \alpha \right\vert
		+\left\vert \beta \right\vert +\left\vert \alpha \right\vert \gamma +M\left(
		1-\gamma \right) }\left( 1+\left\vert x\right\vert \right) ^{-M}\left\Vert
	\left( 1+|\xi |\right) ^{Q}h_{0}\right\Vert _{L_{x,\xi }^{\infty }}\text{.}
\end{align*}%
It is easy to see that for $\left\vert x\right\vert \leq 2$,
\begin{align*}
	& \iint_{\mathscr{D}\cap \left( T_{N}\times \mathbf{D}_{2}\right) }
	\Biggl\{
	e^{-q^{\prime }\sum_{j=0}^{N}\nu (\xi _{j})(t_{j}-t_{j+1})}\Bigl[%
	\prod_{j=0}^{N-1}H^{(\varsigma _{j})}(\xi _{j},\xi _{j+1})\Bigr]\\
	& \qquad \cdot \frac{\prod_{j=1}^{\ell }(t_{j}-t_{j+1})}{\prod_{j=0}^{m-1}%
		\left( t_{N-3j-2}-t_{N-3j}\right) }\Bigl\vert h_{0}\bigl(
	x-\sum_{i=0}^{N}\xi _{i}(t_{i}-t_{i+1}),\xi _{N}\bigr) \Bigr\vert \Biggr\}
	\\
	& \leq Ce^{-c_{0}\nu \left( \xi \right) t}\left( 1+\left\vert \xi
	\right\vert \right) ^{-2N-Q+\left\vert \alpha \right\vert +\left\vert \beta
		\right\vert +\left\vert \alpha \right\vert \gamma }\left( 1+\left\vert
	x\right\vert \right) ^{-M}\left\Vert \left( 1+|\xi |\right)
	^{Q}h_{0}\right\Vert _{L_{x,\xi }^{\infty }}\text{.}
\end{align*}%
The estimates for other terms, like $(II)$ are similar, so that we can
conclude%
\begin{align*}
	\left\vert \partial _{x}^{\alpha }\partial _{\xi }^{\beta }\mathbf{M}%
	_{N}^{t}h_{0}\left( x,\xi \right) \right\vert & \leq Ce^{-c_{0}\nu (\xi
		)t}\left( 1+|x|\right) ^{-M}\left( 1+|\xi |\right) ^{-2N-Q+\left[ |\alpha
		|+|\beta |+|\alpha |\gamma \right] +M(1-\gamma )} \\
	& \quad \cdot \left\Vert \left( 1+|\xi |\right) ^{Q}h_{0}\right\Vert
	_{L_{x,\xi }^{\infty }}\quad \mbox{for }Q\in \mathbb{R}\text{,}\;N\geq
	3\left( |\alpha |+|\beta |\right) \text{,}
\end{align*}%
when $h_{0}\left( x,\xi \right) =0$ for $\left\vert x\right\vert >1$, $\xi
\in \mathbb{R}^{3}$. This completes the proof.
\end{proof}

With this Enhanced Mixture Lemma, we have the following estimate for the
singular wave $h^{(j)}$:

\begin{proposition}
	\label{singular} Assume that $h_{0}\in L_{\xi ,Q}^{\infty }L_{x}^{\infty }$
	with $Q\geq 0$ is compactly supported in the $x$ variable for all $\xi $,
	namely,
	\begin{equation*}
		h_{0}\left( x,\xi \right) \equiv 0\text{ for }\left\vert x\right\vert \geq 1%
		\text{, }\xi \in \mathbb{R}^{3}\text{.}
	\end{equation*}%
	Then for any $0\leq \theta \leq Q$,
	\begin{equation}\label{sin}
		\left\vert h^{(j)}\right\vert _{L_{\xi ,Q-\theta }^{\infty }}\lesssim
		e^{-ct}\left\langle x\right\rangle ^{-\frac{\theta }{1-\gamma }}\left\vert
		h_{0}\right\vert _{L_{x}^{\infty }L_{\xi ,Q}^{\infty }}\text{,}\quad j\geq 0%
		\text{.}
	\end{equation}
\end{proposition}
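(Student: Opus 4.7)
The plan is to read this statement as a direct consequence of the two $L^\infty$ pointwise bounds already established for the mixture operator, namely the first lemma of the subsection (for $j=0$) and part (2) of the Enhanced Mixture Lemma (for $j\geq 1$), by making the right choice of parameters $(M,P)$.

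Since $h^{(j)}=\mathbf{M}_j^t h_0$, I would set $P=Q-\theta$ and $M=\theta/(1-\gamma)$. The hypothesis $0\leq \theta\leq Q$ guarantees $P\geq 0$ and $M\geq 0$, and these two numbers satisfy the compatibility relation $M(1-\gamma)+P=Q$ appearing in the $j=0$ lemma. For $j=0$, that lemma immediately yields
\[
|\mathbf{M}_0^t h_0|_{L_\xi^\infty}\lesssim e^{-\nu(\xi)t/2}(1+|x|)^{-M}(1+|\xi|)^{-P}\|h_0\|_{L_{\xi,Q}^\infty L_x^\infty}.
\]
For $j\geq 1$, I apply part (2) of the Enhanced Mixture Lemma with $\alpha=\beta=0$: the admissibility condition reduces to
\[
j\geq \max\Bigl\{\tfrac{P+M(1-\gamma)-Q}{2},\,0\Bigr\}=\max\{0,0\}=0,
\]
which is automatic, so
\[
|\mathbf{M}_j^t h_0|\leq Ce^{-c_0\nu(\xi)t}(1+|x|)^{-M}(1+|\xi|)^{-P}\bigl\|(1+|\xi|)^Q h_0\bigr\|_{L^\infty_{x,\xi}}.
\]

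Multiplying both sides by $(1+|\xi|)^{Q-\theta}=(1+|\xi|)^{P}$, taking the supremum in $\xi$, and using $\nu(\xi)\geq \nu_0\langle\xi\rangle^\gamma\geq \nu_0$ to absorb the factor $e^{-c_0\nu(\xi)t}$ into $e^{-ct}$ gives exactly \eqref{sin}. Note that $\langle x\rangle^{-M}$ and $(1+|x|)^{-M}$ differ by a fixed constant, so they are interchangeable in a $\lesssim$-estimate.

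There is no real obstacle to the argument; the only conceptual point worth recording is the interpretation of the choice $M=\theta/(1-\gamma)$. It encodes precisely the trade-off that motivated the paper: the damped transport $\mathbb{S}^t$ only provides $e^{-\nu_0(1+|\xi|)^\gamma t}$, so to convert time decay into spatial decay $(1+|x|)^{-M}$ one must sacrifice $\theta=M(1-\gamma)$ powers of velocity weight, matching $|\xi|\gtrsim |x|/t$ at the price $|\xi|^\gamma t\gtrsim |x|^\gamma t^{1-\gamma}$. The Enhanced Mixture Lemma shows that this same tradeoff survives after arbitrarily many collisions with the background Maxwellian, so the estimate is uniform in $j$.
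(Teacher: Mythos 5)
Your proposal is correct and is essentially the paper's intended argument: the paper states Proposition~\ref{singular} immediately after the Enhanced Mixture Lemma without giving a separate proof, precisely because it follows by the parameter choice you describe. The choice $P=Q-\theta$, $M=\theta/(1-\gamma)$ with $\alpha=\beta=0$ makes the admissibility threshold in part (2) collapse to $N\geq 0$, covering all $j\geq 1$, while the $j=0$ case is the separate $\mathbf{M}_0^t$ lemma with the same $(M,P)$ satisfying $M(1-\gamma)+P=Q$; absorbing $e^{-c_0\nu(\xi)t}\le e^{-c_0\nu_0 t}$ then gives \eqref{sin}.
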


\subsection{Estimate of remainder I (structure inside finite Mach number region): Long wave short wave decomposition} \label{sec:SR-spec}

Throughout this subsection, we will devote ourselves to the study of
remainder term $\mathcal{R}^{(\hat{N})}$:

\begin{equation*}
\partial _{t}\mathcal{R}^{(\hat{N})}+\xi \cdot \nabla _{x}\mathcal{R}^{(\hat{%
N})}-L\mathcal{R}^{(\hat{N})}=Kh^{(\hat{N})}\text{,}\qquad \mathcal{R}^{(%
\hat{N})}\big|_{t=0}=0\text{.}
\end{equation*}%
From the Enhanced Mixture Lemma, the source term $Kh^{(\hat{N})}$ satisfies
that for any given $\hat{q}_{1}$, $\hat{q}_{2}$, $\hat{M}$, $\hat{P}\geq 0$
and $Q\in \mathbb{R}$, there exists positive integer $\hat{N}$ such that

\begin{equation*}
\sum_{0\leq |\alpha |\leq \hat{q}_{1},0\leq |\beta |\leq \hat{q}%
_{2}}\left\vert \partial _{x}^{\alpha }\partial _{\xi }^{\beta }Kh^{(\hat{N}%
)}\left( t,x,\xi \right) \right\vert \leq Ce^{-c_{0}t}\left( 1+|x|\right) ^{-%
\hat{M}}\left( 1+|\xi |\right) ^{-\hat{P}}\left\Vert \left\langle \xi
\right\rangle ^{Q}h_{0}\right\Vert _{L_{x,\xi }^{\infty }}.
\end{equation*}%
Now fix $\hat{q}_{1}$, $\hat{q}_{2}$, $\hat{M}$, $\hat{P}$ with $\hat{q}%
_{1}\geq 2$, $\hat{M}>3$, $\hat{P}>\frac{3}{2}$. (The actual choice will be
decided later.)

It is easy to see that
\begin{equation*}
\mathcal{R}^{(\hat{N})}=\int_{0}^{t}\mathbb{G}^{t-s}Kh^{(\hat{N})}(s)ds,
\end{equation*}%
and therefore using Proposition \ref{Prop-linear} gives
\begin{equation}\label{remainder}
\Vert \mathcal{R}^{(\hat{N})}\Vert _{L_{\xi ,\hat{P}}^{\infty }L_{x}^{\infty
}}\leq (1+t)^{-3/2}\left\Vert \left\langle \xi \right\rangle
^{Q}h_{0}\right\Vert _{L_{x,\xi }^{\infty }}\qquad \text{for }Q>3/2\text{. }
\end{equation}

Note that
\begin{equation*}
g\equiv \mathbb{G}_{L;\perp }^{t}h_{0}+\mathbb{G}_{S}^{t}h_{0}-\mathcal{W}^{(%
\hat{N})}=\mathcal{R}^{(\hat{N})}-\mathbb{G}_{L;0}^{t}h_{0}\text{,}
\end{equation*}%
by Propositions \ref{Prop-linear} and \ref{singular}, we have that for $Q>3/2
$,
\begin{equation*}
\left\vert g\right\vert _{L_{\xi ,Q}^{\infty }}=\left\vert \mathbb{G}%
_{L;\perp }^{t}h_{0}+\mathbb{G}_{S}^{t}h_{0}-\mathcal{W}^{(\hat{N}%
)}\right\vert _{L_{\xi ,Q}^{\infty }}\leq e^{-ct}\left( \Vert h_{0}\Vert
_{L_{\xi ,Q}^{\infty }L_{x}^{\infty }}+\Vert h_{0}\Vert _{L_{\xi
}^{2}L_{x}^{2}}\right) \text{.}
\end{equation*}

On the other hand, for arbitrary $\hat{Q}>0$, one can choose $\hat{P}>\max
\{2\hat{Q}-Q,\frac{3}{2}\}$ and then use Proposition \ref{Prop-linear} and (\ref{remainder}) to obtain
\begin{equation*}
\left\vert g\right\vert _{L_{\xi ,\hat{P}}^{\infty }}=\left\vert \mathcal{R}%
^{(\hat{N})}-\mathbb{G}_{L;0}^{t}h_{0}\right\vert _{L_{\xi ,\hat{P}}^{\infty
}}\leq (1+t)^{-3/2}\left( \Vert h_{0}\Vert _{L_{\xi ,Q}^{\infty
}L_{x}^{\infty }}+\Vert h_{0}\Vert _{L_{\xi }^{\infty }(L_{x}^{\infty }\cap
L_{x}^{1})}\right) \text{.}
\end{equation*}%
By interpolation, we get
\begin{equation*}
\left\vert \mathbb{G}_{L;\perp }^{t}h_{0}+\mathbb{G}_{S}^{t}h_{0}-\mathcal{W}%
^{(\hat{N})}\right\vert _{L_{\xi ,\hat{Q}}^{\infty }}\leq \left\vert
g\right\vert _{L_{\xi ,Q}^{\infty }}^{1/2}\left\vert g\right\vert _{L_{\xi ,%
\hat{P}}^{\infty }}^{1/2}\leq (1+t)^{-3/4}e^{-\frac{c}{2}t}\Vert h_{0}\Vert
_{L_{\xi ,Q}^{\infty }(L_{x}^{\infty }\cap L_{x}^{1})}
\end{equation*}%
and therefore (using Proposition \ref{Prop-linear-fluid})
\begin{equation}\label{remainder-inside}
\left\vert \mathcal{R}^{(\hat{N})}\right\vert _{L_{\xi ,\hat{Q}}^{\infty
}}\leq \left\vert g\right\vert _{L_{\xi ,\hat{Q}}^{\infty }}+\left\vert
\mathbb{G}_{L;0}^{t}h_{0}\right\vert _{L_{\xi ,\hat{Q}}^{\infty }}\leq \left[
\Phi_{m} (t,x)+e^{-ct}\right] \Vert h_{0}\Vert _{L_{\xi ,Q}^{\infty
}L_{x}^{\infty }}
\end{equation}%
for $h_{0}$ compactly supported in $x$. Furthermore, if $\mathrm{P}%
_{0}h_{0}=0$, then we will have extra $1/2$ time decay, that is,
\begin{equation*}
\left\vert \mathcal{R}^{(\hat{N})}\right\vert _{L_{\xi ,\hat{Q}}^{\infty
}}\leq \left[ (1+t)^{-1/2}\Phi_{m} (t,x)+e^{-ct}\right] \Vert h_{0}\Vert
_{L_{\xi ,Q}^{\infty }L_{x}^{\infty }}\text{,}\quad \hbox{if}\quad \mathrm{P}%
_{0}h_{0}=0\text{.}
\end{equation*}

\subsection{Estimate of remainder II (structure outside finite Mach number
region): Weighted energy estimate}

In the previous subsection, we obtain the estimate of $\mathcal{R}^{(\hat{N}%
)}$. However, one term in the estimate is only concerning the time decay. If
restricting to the finite Mach number region, i.e., $|x|\leq 2\mathfrak{M}t$, for any $\mathfrak{M}>0$, one can
use time decay to gain the space decay. In order to get the precise
structure outside the finite Mach number region, we will use the weighted
energy method. We introduce the weight function:
\begin{equation*}
w\left( t,x,\xi \right) :=c_{1}\left[ \delta \left( \left\langle
x\right\rangle -\mathfrak{M}t\right) \right] ^{\frac{\sigma }{1-\gamma }}\left( 1-\chi
\left( \frac{\delta \left( \left\langle x\right\rangle -\mathfrak{M}t\right) }{%
\left\langle \xi \right\rangle _{D}^{1-\gamma }}\right) \right)
+c_{2}\left\langle \xi \right\rangle _{D}^{\sigma }\chi \left( \frac{\delta
\left( \left\langle x\right\rangle -\mathfrak{M}t\right) }{\left\langle \xi
\right\rangle _{D}^{1-\gamma }}\right) \text{,}
\end{equation*}%
where $\left\langle \xi \right\rangle _{D}=(D^{2}+|\xi |^{2})^{1/2}$. The cut-off function $\chi :{\mathbb{R}}\rightarrow {\mathbb{R}}$,
which is a smooth non-increasing function, $\chi (s)=1$ for $s\leq 1$, $\chi
(s)=0$ for $s\geq 2$ and $0\leq \chi \leq 1$.  The
constants $c_{1}$, $c_{2}>0$ are chosen to meet the requirement $\partial
_{t}w\leq 0$.
Note that for
arbitrary $\hat{Q}>0$, $\sigma \geq 2$, choose $\hat{M}\geq \frac{\sigma }{%
1-\gamma }$ and $\hat{P}\geq 2\hat{Q}-Q+\sigma$, by the Enhanced Mixture Lemma, the source term $Kh^{(\hat{N})}$ is good enough and allows us to do the weighted energy estimate. Applying the same argument in \cite{[LinWangWu]}, one can prove that there exists $\mathfrak{M}>0$
such that
\begin{equation}\label{remainder-outside}
\left\vert \mathcal{R}^{(\hat{N})}\right\vert _{L_{\xi ,\hat{Q}}^{\infty
}}\leq C\left( 1+\left\vert x\right\vert \right) ^{-\frac{\sigma }{1-\gamma }%
}\Vert h_{0}\Vert _{L_{\xi ,Q}^{\infty }L_{x}^{\infty }}\quad \mbox{ for }%
|x|>2\mathfrak{M}t\text{.}
\end{equation}

\subsection{Conclusion of the linearized problem}

Define
\begin{equation*}
\mathbb{G}_{\ast }^{t}h_{0}=\mathcal{W}^{(\hat{N}) }\text{,}%
\quad \mathbb{G}_{\sharp }^{t}h_{0}=\mathcal{R}^{(\hat{N}) }%
\text{.}
\end{equation*}%
In virtue of (\ref{sin}), (\ref{remainder-inside}) and (\ref{remainder-outside}), we have the following theorem of the linearized
problem:

\begin{theorem}\label{PF-estimate}
Assume that $h_{0}\in L_{\xi ,Q}^{\infty }(L_{x}^{\infty }\cap L_{x}^{1})$, $%
Q>3/2$. Then the solution $\mathbb{G}^{t}h_{0}$ can be decomposed as
\begin{equation*}
\mathbb{G}^{t}h_{0}=\mathbb{G}_{\ast }^{t}h_{0}+\mathbb{G}_{\sharp }^{t}h_{0}%
\text{,}
\end{equation*}%
with the following estimates
\begin{equation*}
\left\vert \mathbb{G}_{\ast }^{t}h_{0}\right\vert _{L_{\xi ,Q}^{\infty
}}\lesssim e^{-ct}\left\Vert h_{0}\right\Vert _{L_{\xi ,Q}^{\infty
}L_{x}^{\infty }}\text{,}
\end{equation*}%
and for arbitrary $\hat{Q}>0$,
\begin{equation*}
\Vert \mathbb{G}_{\sharp }^{t}h_{0}\Vert _{L_{\xi ,\hat{Q}}^{\infty
}L_{x}^{\infty }}\lesssim (1+t)^{-3/2}\Vert h_{0}\Vert _{L_{\xi ,Q}^{\infty
}(L_{x}^{\infty }\cap L_{x}^{1})}\text{.}
\end{equation*}%
Moreover, if we further assume $h_{0}$ is compactly supported in the
 $x$ variable for all $\xi $, namely,
\begin{equation*}
h_{0}\left( x,\xi \right) \equiv 0\text{ for }\left\vert x\right\vert \geq 1%
\text{, }\xi \in \mathbb{R}^{3}\text{,}
\end{equation*}%
then for any $\hat{Q}>0$, $\sigma \geq 2$ and $0<\theta \leq Q$,
\begin{equation*}
\left\vert \mathbb{G}_{\ast }^{t}h_{0}\right\vert _{L_{\xi ,Q-\theta
}^{\infty }}\lesssim e^{-ct}\left\langle x\right\rangle ^{-\frac{\theta }{%
1-\gamma }}\left\Vert h_{0}\right\Vert _{L_{x}^{\infty }L_{\xi ,Q}^{\infty }}%
\text{,}
\end{equation*}%
\begin{equation*}
\left\vert \mathbb{G}_{\sharp }^{t}h_{0}\right\vert _{L_{\xi ,\hat{Q}%
}^{\infty }}\lesssim \left[ \Phi_{m} (t,x)+(1+t+|x|)^{-\frac{\sigma }{1-\gamma }}%
\right] \Vert h_{0}\Vert _{L_{\xi ,Q}^{\infty }L_{x}^{\infty }}\text{,}
\end{equation*}%
and
\begin{equation*}
\left\vert \mathbb{G}_{\sharp }^{t}\mathrm{P}_{1}h_{0}\right\vert _{L_{\xi ,%
\hat{Q}}^{\infty }}\lesssim \left[ (1+t)^{-\frac{1}{2}}\Phi_{m}
(t,x)+(1+t+|x|)^{-\frac{\sigma }{1-\gamma }}\right] \Vert h_{0}\Vert
_{L_{\xi ,Q}^{\infty }L_{x}^{\infty }}\text{,}
\end{equation*}
here $m$ can be any arbitrarily large number.
\end{theorem}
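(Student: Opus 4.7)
\medskip

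\textbf{Proof plan.} This theorem is a packaging of the estimates already established in Sections~\ref{sec:linear} and~\ref{sec:SR-spec}. The starting point is the definition
\[
\mathbb{G}^{t}_{\ast} h_{0} := \mathcal{W}^{(\hat{N})} = \sum_{k=0}^{\hat{N}} \mathbf{M}_{k}^{t} h_{0}, \qquad \mathbb{G}^{t}_{\sharp} h_{0} := \mathcal{R}^{(\hat{N})} = \mathbb{G}^{t} h_{0} - \mathcal{W}^{(\hat{N})},
\]
for an integer $\hat{N}$ chosen large depending on the target weights $(Q,\hat{Q},\sigma)$. The identity $\mathbb{G}^{t} h_{0} = \mathbb{G}^{t}_{\ast} h_{0} + \mathbb{G}^{t}_{\sharp} h_{0}$ is then automatic, and the proof reduces to verifying the five bounds one by one.

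For the two unconditional estimates I would begin with $\mathbb{G}^{t}_{\ast}$. Since $\nu(\xi) \geq \nu_{0}$ for $\gamma \geq 0$, the zeroth iterate obeys $|h^{(0)}|_{L^{\infty}_{\xi,Q}} \leq e^{-\nu_{0} t}|h_{0}(x-\xi t,\cdot)|_{L^{\infty}_{\xi,Q}}$, and the boundedness $|Kg|_{L^{\infty}_{\xi,Q}} \lesssim |g|_{L^{\infty}_{\xi,Q-(2-\gamma)}}$ lets one close an induction on the Duhamel representation of $h^{(k)} = \mathbf{M}_{k}^{t} h_{0}$, giving $\|h^{(k)}\|_{L^{\infty}_{\xi,Q} L^{\infty}_{x}} \lesssim \tfrac{t^{k}}{k!}\, e^{-\nu_{0} t} \|h_{0}\|_{L^{\infty}_{\xi,Q} L^{\infty}_{x}}$. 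Summing over $0 \leq k \leq \hat{N}$ and absorbing the polynomial by a slight shaving of the rate gives the first assertion. For $\mathbb{G}^{t}_{\sharp}$ I would use the Duhamel formula $\mathcal{R}^{(\hat{N})}(t) = \int_{0}^{t} \mathbb{G}^{t-s}(K h^{(\hat{N})})(s)\,ds$, apply Proposition~\ref{Prop-linear} to $\mathbb{G}^{t-s}$, and use the Enhanced Mixture Lemma to show that $K h^{(\hat{N})}$ lies in $L^{\infty}_{\xi,\hat{P}}(L^{\infty}_{x} \cap L^{1}_{x})$ with exponential-in-$s$ smallness when $\hat{N}$ is sufficiently large in terms of $\hat{Q}$; the ensuing time convolution yields the claimed $(1+t)^{-3/2}$ decay, as already recorded in \eqref{remainder}.

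For the compactly supported case, estimate~(iii) is exactly Proposition~\ref{singular}. For (iv) and (v) the main task is to glue the interior estimate \eqref{remainder-inside} with the exterior weighted-energy estimate \eqref{remainder-outside}. In the interior region $|x| \leq 2\mathfrak{M} t$, \eqref{remainder-inside} dominates $\mathcal{R}^{(\hat{N})}$ by $\Phi_{m}(t,x) + e^{-ct}$; since $|x| \lesssim t$ there, one absorbs $e^{-ct} \lesssim (1+t+|x|)^{-\sigma/(1-\gamma)}$. In the exterior region $|x| > 2\mathfrak{M} t$, \eqref{remainder-outside} yields $(1+|x|)^{-\sigma/(1-\gamma)} \sim (1+t+|x|)^{-\sigma/(1-\gamma)}$, which already dominates $\Phi_{m}(t,x)$ outside the cone in view of its definition \eqref{fluid-structure}. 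Patching the two regions gives (iv). For (v) the only additional input is the extra $(1+t)^{-1/2}$ factor supplied by Proposition~\ref{Prop-linear-fluid} when $\mathrm{P}_{0} h_{0} = 0$, which propagates unchanged through the interior piece while the exterior estimate is unaffected.

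The main obstacle is bookkeeping rather than a new analytic idea: one must choose $\hat{N}$ so that the four parameters $(\hat{q}_{1},\hat{q}_{2},\hat{M},\hat{P})$ feeding the spectral decomposition inside the cone and the weighted energy estimate outside can be satisfied simultaneously for the target weights $\hat{Q}$ and decay order $\sigma$. Tracking the quantitative thresholds of the Enhanced Mixture Lemma against the requirements of Proposition~\ref{Prop-linear}, \eqref{remainder-inside} and \eqref{remainder-outside} fixes an explicit (though unwieldy) lower bound for $\hat{N}$; once this is in place the interior/exterior patching and the microscopic refinement are routine.
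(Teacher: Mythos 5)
Your proposal is correct and follows the same route as the paper: define $\mathbb{G}_{\ast}^{t}h_{0}=\mathcal{W}^{(\hat N)}=\sum_{k=0}^{\hat N}\mathbf{M}_{k}^{t}h_{0}$, $\mathbb{G}_{\sharp}^{t}h_{0}=\mathcal{R}^{(\hat N)}$, then invoke Proposition~\ref{singular} for the wave part (with and without $\theta=0$), equation~\eqref{remainder} via Proposition~\ref{Prop-linear} and the Enhanced Mixture Lemma for the unweighted decay of $\mathcal{R}^{(\hat N)}$, and glue the interior estimate~\eqref{remainder-inside} with the exterior weighted-energy estimate~\eqref{remainder-outside} for (iv)--(v), using the extra $(1+t)^{-1/2}$ from Proposition~\ref{Prop-linear-fluid} when $\mathrm{P}_{0}h_{0}=0$. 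The paper itself compresses all of this into a one-line reference to \eqref{sin}, \eqref{remainder-inside} and \eqref{remainder-outside}, so your write-up is simply a faithful unpacking of the same argument; the only overstatement is that $(1+t+|x|)^{-\sigma/(1-\gamma)}$ need not pointwise dominate $\Phi_{m}$ outside the cone, but since the stated bound is a sum of the two terms this is harmless.
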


\section{Nonlinear Problem}\label{sec:nonlinear}

In this section, we will decompose the nonlinear solution to the fluid and
the non-fluid parts to complete the nonlinear problem.

The fully nonlinear Boltzmann equation is given by
\begin{align}  \label{nonlinear}
\left\{ \begin{aligned} &
\partial_{t}f+\xi\cdot\nabla_{x}f=Lf+\Gamma\left(f,f\right),\\ &
f\big|_{t=0}=\eps f_{0}. \end{aligned} \right.
\end{align}

\subsection{Nonlinear fluid structure}

By the Duhamel principle, the solution satisfies the integal equation
\begin{equation*}
f\left( t\right) =\eps\mathbb{G}^{t}f_{0}+\int_{0}^{t}\mathbb{G}^{t-\tau
}\Gamma \left( f,f\right) \left( \tau \right) d\tau \text{.}
\end{equation*}%
The solution $f$ can be decomposed into two parts:
\begin{equation*}
f=f_{\ast }+f_{\sharp }
\end{equation*}%
through the following coupled integral system
\begin{equation*}
f_{\ast }=\eps\mathbb{G}_{\ast }^{t}f_{0}+\int_{0}^{t}\mathbb{G}_{\ast
}^{t-\tau }\Gamma \left( f_{\ast }+f_{\sharp },f_{\ast }+f_{\sharp }\right)
\left( \tau \right) d\tau \text{,}
\end{equation*}
\begin{equation*}
f_{\sharp }=\eps\mathbb{G}_{\sharp }^{t}f_{0}+\int_{0}^{t}\mathbb{G}_{\sharp
}^{t-\tau }\Gamma \left( f_{\ast }+f_{\sharp },f_{\ast }+f_{\sharp }\right)
\left( \tau \right) d\tau \text{.}
\end{equation*}

For any fixed $\beta >\frac{3}{2}+\gamma $ and $\theta _{0}$, $\theta _{1}>0$%
, we have
\begin{align*}
\left\vert f_{\ast }\right\vert _{L_{\xi ,\beta }^{\infty }}& \leq \eps%
\left\vert \mathbb{G}_{\ast }^{t}f_{0}\right\vert _{L_{\xi ,\beta }^{\infty
}}+\int_{0}^{t}\left\vert \mathbb{G}_{\ast }^{t-\tau }\Gamma \left(
f,f\right) \left( \tau \right) \right\vert _{L_{\xi ,\beta }^{\infty }}d\tau
\\
& \leq \eps\left\vert \mathbb{G}_{\ast }^{t}\left\langle \xi \right\rangle
^{-\theta _{0}}\left\langle \xi \right\rangle ^{\theta _{0}}f_{0}\right\vert
_{L_{\xi ,\beta }^{\infty }}+\int_{0}^{t}\left\vert \mathbb{G}_{\ast
}^{t-\tau }\left\langle \xi \right\rangle ^{-\theta _{1}}\left\langle \xi
\right\rangle ^{\theta _{1}}\Gamma \left( f,f\right) \left( \tau \right)
\right\vert _{L_{\xi ,\beta }^{\infty }}d\tau  \\
& \lesssim \eps e^{-ct}\left\langle x\right\rangle ^{-\frac{\theta _{0}}{%
1-\gamma }}\Vert f_{0}\Vert _{L_{\xi ,\beta +\theta _{0}}^{\infty
}L_{x}^{\infty }} \\
& \quad +\int_{0}^{t}\int_{{\mathbb{R}}^{3}}e^{-c(t-\tau )}(1+|x-y|)^{-\frac{%
\theta _{1}}{1-\gamma }}\left\vert \Gamma (f,f)\right\vert _{L_{\xi ,\beta
+\theta _{1}}^{\infty }}(y,\tau )dyd\tau .
\end{align*}%
Note that by interpolation and Lemma \ref{basic-Gamma}
\begin{align*}
\left\vert \Gamma (f,f)\right\vert _{L_{\xi ,\beta +\theta _{1}}^{\infty }}&
\leq \left\vert \Gamma (f,f)\right\vert _{L_{\xi ,\beta -\gamma }^{\infty
}}^{1/2}\left\vert \Gamma (f,f)\right\vert _{L_{\xi ,2(\beta +\theta
_{1})-(\beta -\gamma )}^{\infty }}^{1/2} \\
& \leq \left\vert \Gamma (f,f)\right\vert _{L_{\xi ,\beta -\gamma }^{\infty
}}^{1/2}\left( \left\vert \Gamma (f,f)\right\vert _{L_{\xi ,\beta -\gamma
}^{\infty }}^{1/2}\left\vert \Gamma (f,f)\right\vert _{L_{\xi ,2^{2}(\beta
+\theta _{1})-(2^{2}-1)(\beta -\gamma )}^{\infty }}^{1/2}\right) ^{1/2} \\
& \leq \left\vert \Gamma (f,f)\right\vert _{L_{\xi ,\beta -\gamma }^{\infty
}}^{1-\frac{1}{2^{k}}}\left\vert \Gamma (f,f)\right\vert _{L_{\xi
,2^{k}(\beta +\theta _{1})-(2^{k}-1)(\beta -\gamma )}^{\infty }}^{\frac{1}{%
2^{k}}} \\
& \lesssim \left\vert f\right\vert _{L_{\xi ,\beta }^{\infty }}^{2-\frac{1}{%
2^{k-1}}}\left\vert f\right\vert _{L_{\xi ,\beta +2^{k}(\theta _{1}+\gamma
)}^{\infty }}^{\frac{1}{2^{k-1}}}.
\end{align*}%
It is well known that the large time behavior of $f$ is given by (see ref.
\cite{[LinWangLyuWu]}):
\begin{equation*}
\left\Vert f\right\Vert _{L_{\xi ,\beta +2^{k}(\theta _{1}+\gamma )}^{\infty
}L_{x}^{\infty }}\lesssim \eps(1+t)^{-3/2}\left\Vert f_{0}\right\Vert
_{L_{\xi ,\beta +2^{k}(\theta _{1}+\gamma )}^{\infty }(L_{x}^{\infty }\cap
L_{x}^{1})}\,\text{,}
\end{equation*}%
which implies that (here we denote $\hat{\beta}=\max \{\beta +\theta
_{0},\beta +2^{k}(\theta _{1}+\gamma )\}$)
\begin{align}
& \quad \left\vert f_{\ast }\right\vert _{L_{\xi ,\beta }^{\infty }}  \notag
\label{nonlinear-singular} \\
& \lesssim \eps e^{-ct}\left\langle x\right\rangle ^{-\frac{\theta _{0}}{%
1-\gamma }}\Vert f_{0}\Vert _{L_{\xi ,\hat{\beta}}^{\infty }L_{x}^{\infty }}
\\
& \quad +\left[ e^{-ct}(1+|x|)^{^{-\frac{\theta _{1}}{1-\gamma }}}\right]
\ast _{t,x}\left[ (1+t)^{-\frac{3}{2^{k}}}\left( |f_{\ast }|_{L_{\xi ,\beta
}^{\infty }}+|f_{\sharp }|_{L_{\xi ,\beta }^{\infty }}\right) ^{2-\frac{1}{%
2^{k-1}}}\right]   \notag \\
& \quad \quad \cdot \eps^{\frac{1}{2^{k-1}}}\left\Vert f_{0}\right\Vert
_{L_{\xi ,\hat{\beta}}^{\infty }(L_{x}^{\infty }\cap L_{x}^{1})}^{\frac{1}{%
2^{k-1}}}\,.  \notag
\end{align}%
As for $f_{\sharp }$, we have
\begin{align}
\left\vert f_{\sharp }\right\vert _{L_{\xi ,\beta }^{\infty }}& \leq \eps%
\left\vert \mathbb{G}_{\sharp }^{t}f_{0}\right\vert _{L_{\xi ,\beta
}^{\infty }}+\int_{0}^{t}\left\vert \mathbb{G}_{\sharp }^{t-\tau }\Gamma
\left( f,f\right) \left( \tau \right) \right\vert _{L_{\xi ,\beta }^{\infty
}}d\tau   \notag  \label{nonlinear-regular} \\
& \lesssim \eps\left[ \Phi_{m} (t,x)+(1+t+|x|)^{-\frac{\sigma }{1-\gamma }}%
\right] \left\Vert f_{0}\right\Vert _{L_{\xi ,\hat{\beta}}^{\infty
}L_{x}^{\infty }}  \notag \\
& \quad +\left[ (1+t)^{-1/2}\Phi_{m} (t,x)+(1+t+|x|)^{-\frac{\sigma }{1-\gamma }}%
\right] \ast _{t,x}\left\vert \Gamma (f,f)\right\vert _{L_{\xi ,\beta
-\gamma }^{\infty }} \\
& \lesssim \eps\left[ \Phi_{m} (t,x)+(1+t+|x|)^{-\frac{\sigma }{1-\gamma }}%
\right] \left\Vert f_{0}\right\Vert _{L_{\xi ,\hat{\beta}}^{\infty
}L_{x}^{\infty }}  \notag \\
& \quad +\left[ (1+t)^{-1/2}\Phi_{m} (t,x)+(1+t+|x|)^{-\frac{\sigma }{1-\gamma }}%
\right] \ast _{t,x}\left[ \left\vert f_{\ast }\right\vert _{L_{\xi ,\beta
}^{\infty }}+\left\vert f_{\sharp }\right\vert _{L_{\xi ,\beta }^{\infty }}%
\right] ^{2}\text{,}  \notag
\end{align}%
here $\sigma $ can be arbitrarily large.

Now, we design the following ansatz:
\begin{align}
\left\vert f_{\sharp }\right\vert _{L_{\xi ,\beta }^{\infty }}& \lesssim \eps%
\left[ \Phi_{m} (x,t)+(1+t+|x|)^{-\frac{\sigma }{1-\gamma }}\right]
\notag \\
& \quad +\eps^{2}\left[ \left( 1+t\right) ^{-2}B_{3/2}(|x|,t)+\left(
1+t\right) ^{-\frac{5}{2}}B_{1}(|x|-\mathbf{c}t,t)\right] \text{,}  \notag
\end{align}%
and
\begin{align}
\left\vert f_{\ast }\right\vert _{L_{\xi ,\beta }^{\infty }}& \lesssim \eps %
e^{-ct}\left\langle x\right\rangle ^{-\frac{\theta _{0}}{1-\gamma }}  \notag
\label{ansatz2} \\
& \quad +\eps^{2}(1+t)^{-3\cdot 2^{-k}}\left[ \left( 1+t\right)
^{-3/2}B_{3/2}(|x|,t)+\left( 1+t\right) ^{-2}B_{1}(|x|-\mathbf{c}t,t)\right]
^{2-\frac{1}{2^{k-1}}}  \notag \\
& \lesssim \eps e^{-ct}\left\langle x\right\rangle ^{-\frac{\theta _{0}}{%
1-\gamma }} \notag \\
& \quad +\eps^{2}\left[ \left( 1+t\right) ^{-3}B_{3\left( 1-2^{-k}\right)
}(|x|,t)+\left( 1+t\right) ^{-(4-2^{-k})}B_{2(1-2^{-k})}(|x|-\mathbf{c}t,t)%
\right] \text{.}  \notag
\end{align}%
Based on the ansatz assumptions, we have
\begin{equation*}
\left\vert f_{\sharp }\right\vert _{L_{\xi ,\beta }^{\infty }}\lesssim \eps%
\left[ \left( 1+t\right) ^{-3/2}B_{3/2}(|x|,t)+\left( 1+t\right)
^{-2}B_{1}(|x|-\mathbf{c}t,t)\right] \text{,}
\end{equation*}%
and
\begin{equation*}
\left\vert f_{\ast }\right\vert _{L_{\xi ,\beta }^{\infty }}\lesssim \eps%
\left[ \left( 1+t\right) ^{-3}B_{3\left( 1-2^{-k}\right) }(|x|,t)+\left(
1+t\right) ^{-(4-2^{-k})}B_{2(1-2^{-k})}(|x|-\mathbf{c}t,t)\right] \text{.}
\end{equation*}%
Therefore,
\begin{equation*}
\left\vert f_{\ast }\right\vert _{L_{\xi ,\beta }^{\infty }}+\left\vert
f_{\sharp }\right\vert _{L_{\xi ,\beta }^{\infty }}\lesssim \eps\left[
\left( 1+t\right) ^{-3/2}B_{3/2}(|x|,t)+\left( 1+t\right) ^{-2}B_{1}(|x|-%
\mathbf{c}t,t)\right] \text{\thinspace .}
\end{equation*}%
To verify the ansatz, in accordance with (\ref{nonlinear-singular}) and (\ref%
{nonlinear-regular}), we need to confirm that
\begin{align*}
& \quad \left[ (1+t)^{-1/2}\Phi_{m} (t,x)+(1+t+|x|)^{-\frac{\sigma }{1-\gamma }}%
\right] \ast _{t,x}\left[ \left( 1+t\right) ^{-3/2}B_{3/2}(|x|,t)+\left(
1+t\right) ^{-2}B_{1}(|x|-\mathbf{c}t,t)\right] ^{2} \\
&\lesssim \left[ (1+t)^{-1/2}\Phi_{m} (t,x)\right] \ast _{t,x}\left[ \left(
1+t\right) ^{-3}B_{3}(|x|,t)+\left( 1+t\right) ^{-4}B_{2}(|x|-\mathbf{c}t,t)%
\right] \\
& \lesssim \left( 1+t\right) ^{-2}B_{3/2}(|x|,t)+\left( 1+t\right) ^{-\frac{5}{2}%
}B_{1}(|x|-\mathbf{c}t,t)
\end{align*}%
and
\begin{align*}
& \quad \left[ e^{-ct}(1+|x|)^{^{-\frac{\theta _{1}}{1-\gamma }}}\right]
\ast _{t,x}(1+t)^{-3\cdot 2^{-k}}\left[ \left( 1+t\right)
^{-3/2}B_{3/2}(|x|,t)+\left( 1+t\right) ^{-2}B_{1}(|x|-\mathbf{c}t,t)\right]
^{2(1-2^{-k})} \\
& \lesssim \left[ e^{-ct}(1+|x|)^{^{-\frac{\theta _{1}}{1-\gamma }}}\right] \ast
_{t,x}\left[ \left( 1+t\right) ^{-3}B_{3\left( 1-2^{-k}\right)
}(|x|,t)+\left( 1+t\right) ^{-(4-2^{-k})}B_{2(1-2^{-k})}(|x|-\mathbf{c}t,t)%
\right] \\
& \lesssim\left( 1+t\right) ^{-3}B_{3\left( 1-2^{-k}\right) }(|x|,t)+\left(
1+t\right) ^{-(4-2^{-k})}B_{2(1-2^{-k})}(|x|-\mathbf{c}t,t)\,.
\end{align*}

For $f_{\sharp }$, it corresponds to the fluid part. Note that the fluid
structure of the hard potential case $\left( 0\leq \gamma <1\right) $ is
almost the same as hard sphere case, the only difference is that the hard
potential case is of polynomial type of arbitrary order, and the hard sphere
case is of exponential type. Therefore, the following nonlinear wave interaction
estimates are parallel to those for the hard sphere case (see Section 6.2 of \cite%
{[LinWangWu2]}) and hence we just state the result.

\begin{lemma}[Nonlinear wave interactions for $f_{\sharp }$]
Let $m$ be any large positive number. Then the following convolution estimates hold:
\begin{enumerate}
\item \begin{equation*}
		\left( 1+t\right) ^{-2}B_{m}(|x|,t)\ast _{x,t}\left( 1+t\right)
		^{-3}B_{3}(|x|,t)\lesssim \left( 1+t\right) ^{-2}B_{3}(|x|,t)\hbox{.}
	\end{equation*}
\item \begin{align*}
	&\mathbf{1}_{\{ \left \vert x\right \vert \leq \mathbf{c}t\}}\left(
	1+t\right) ^{-2}B_{3/2}(|x|,t)\ast _{x,t}\left( 1+t\right) ^{-4}B_{2}(|x|-%
	\mathbf{c}t,t)  \notag \\
	&\lesssim \left( 1+t\right) ^{-2}B_{3/2}(|x|,t)+\left( 1+t\right)
	^{-5/2}B_{1}(|x|-\mathbf{c}t,t) \hbox{.}
\end{align*}
\item \begin{align*}
	&\left( 1+t\right) ^{-5/2}B_{m}(|x|-\mathbf{c}t,t)*_{x,t}\left(1+t\right)
	^{-3}B_{3}(|x|,t)  \\
	&\lesssim \left( 1+t\right) ^{-2}B_{3/2}(|x|,t)+\left( 1+t\right)
	^{-5/2}B_{1}(|x|-\mathbf{c}t,t) \hbox{.}  \notag
\end{align*}
\item \begin{align*}
	&\left( 1+t\right) ^{-2}B_{m}(|x|,t)\ast _{x,t}\left( 1+t\right)
	^{-4}B_{2}(|x|-\mathbf{c}t,t) \\
	&\lesssim \left( 1+t\right) ^{-2}B_{3/2}(|x|,t)+\left(
	1+t\right)^{-5/2}B_{1}(|x|-\mathbf{c}t,t) \hbox{.}  \notag
\end{align*}
\item \begin{align*}
	&\left( 1+t\right) ^{-5/2}B_{m}(|x|-\mathbf{c}%
	t,t)*_{x,t}(1+t)^{-4}B_{2}(|x|-\mathbf{c}t,t)  \label{C10} \\
	&\lesssim \left( 1+t\right) ^{-2}B_{3/2}(|x|,t)+\left( 1+t\right)
	^{-5/2}B_{1}(|x|-\mathbf{c}t,t) \hbox{.}  \notag
\end{align*}
\end{enumerate}

\end{lemma}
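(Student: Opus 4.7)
The plan is to prove each of the five convolution estimates by the standard time-space decomposition technique, treating the diffusion contributions (the $B_{m}(|x|,t)$ factors, concentrated near the origin with Gaussian-like spreading) separately from the Huygens contributions (the $B_{m}(|x|-\mathbf{c}t,t)$ factors, concentrated in thin shells near the sound cone). For any convolution $F\ast_{x,t}G$ I would first split the time integral as $\int_{0}^{t}=\int_{0}^{t/2}+\int_{t/2}^{t}$, so that on $[0,t/2]$ the factor $F$ supplies the $(1+t)^{-\bullet}$ time decay while $G$ is integrated in space, and the roles are reversed on $[t/2,t]$.

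The three quantitative inputs I would rely on are: (i) the diffusion mass bound $\int_{\mathbb{R}^{3}}B_{m}(|y|,\tau)\,dy\lesssim (1+\tau)^{3/2}$ for $m>3/2$; (ii) the Huygens shell mass bound $\int_{\mathbb{R}^{3}}B_{m}(|y|-\mathbf{c}\tau,\tau)\,dy\lesssim (1+\tau)^{5/2}$ for $m>1/2$, obtained by integrating across a shell of area $O(\tau^{2})$ and thickness $O(\sqrt{1+\tau})$; and (iii) the triangle-type inequality
\[
1+\frac{|x|^{2}}{1+t}\le \Bigl(1+\frac{|x-y|^{2}}{1+t-\tau}\Bigr)\Bigl(1+\frac{|y|^{2}}{1+\tau}\Bigr)
\]
together with its cone-shifted analogue along $|y|-\mathbf{c}\tau$, which let me transfer spatial decay from one factor to the other via a case split based on whether $|x-y|$ or $|y|$ dominates the relevant quantity.

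Items (1), (3) and (4) follow directly: in each, one factor is a pure diffusion wave and the other is diffusion or Huygens, so the case split in $x$ combined with (i) and (ii) closes the estimate. Item (2) is of the same flavor but additionally uses the characteristic function $\mathbf{1}_{\{|x|\le \mathbf{c}t\}}$ on the left to localize the output inside the cone and to justify producing both a diffusion output inside the cone and a $B_{1}(|x|-\mathbf{c}t,t)$ output near it.

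The main obstacle is item (5), the Huygens–Huygens convolution. Both factors concentrate in thin shells near the spheres $|y|=\mathbf{c}\tau$ and $|x-y|=\mathbf{c}(t-\tau)$, and these shells intersect transversally in a circle. A careful change of variables along the normal to that intersection circle yields an extra gain of $(1+t)^{-1/2}$, producing the $B_{3/2}(|x|,t)$ contribution inside the cone and the $B_{1}(|x|-\mathbf{c}t,t)$ contribution near it. Since the entire wave algebra in items (1)–(5) is identical to that of the hard-sphere model (the wave kernels $B_{m}$ carry no $\gamma$-dependence), I would ultimately invoke the corresponding computations from Section 6.2 of \cite{[LinWangWu2]} verbatim, modulo renaming.
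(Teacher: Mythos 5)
Your proposal is correct and matches the paper's approach: the paper does not prove this lemma but simply observes that the $B_m$ kernels carry no $\gamma$-dependence and refers to Section 6.2 of \cite{[LinWangWu2]} for the identical hard-sphere computations, which is exactly the citation you invoke. Your accompanying outline — time-splitting at $t/2$, the diffusion mass bound $(1+\tau)^{3/2}$, the Huygens shell mass bound $(1+\tau)^{5/2}$, the triangle-type inequality to transfer spatial decay, and the transversal shell-intersection gain of $(1+t)^{-1/2}$ for the Huygens--Huygens case (5) — is a faithful summary of what that reference actually carries out.
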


For $f_{\ast }$, according to the convolution estimates in Section \ref{sec:convolution}, we have

\begin{lemma}[Nonlinear wave interaction for $f_{*}$] \label{lem:wave-int-*}
Let $k\geq 3$ and $\theta _{1}\geq 6(1-\gamma )(1-2^{-k})$. Then
\begin{enumerate}
	\item \begin{align*}
		&\left[ e^{-ct}(1+|x|)^{^{-\frac{\theta _{1}}{1-\gamma }}}\right] \ast
		_{t,x}\left( 1+t\right) ^{-3}B_{3\left( 1-2^{-k}\right) }(|x|,t) \\
		&\lesssim \left( 1+t\right) ^{-3}B_{3\left( 1-2^{-k}\right) }(|x|,t)\,.
	\end{align*}
	\item \begin{align*}
		&\left[e^{-ct}(1+|x|)^{^{-\frac{\theta_{1}}{1-\gamma}}}\right]%
		\ast_{t,x}\left( 1+t\right)^{-(4-2^{-k})}B_{2(1-2^{-k})}(|x|-\mathbf{c}t,t)
		\\
		&\lesssim \left( 1+t\right)^{-(4-2^{-k})}B_{2(1-2^{-k})}(|x|-\mathbf{c}%
		t,t)\,.
	\end{align*}
\end{enumerate}
\end{lemma}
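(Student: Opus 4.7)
The strategy is to reduce each convolution to two steps: a pointwise spatial convolution estimate at each fixed $\tau$, followed by a one-dimensional estimate in the time variable. The exponential factor $e^{-c(t-\tau)}$ localizes the kernel near $\tau=t$, so the pointwise shape of the right-hand side is inherited from the source evaluated at time $t$.

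\textbf{Spatial step.} Writing $\mu:=\theta_1/(1-\gamma)$, the hypothesis combined with $k\geq 3$ gives $\mu\geq 6(1-2^{-k})\geq 21/4$. I first would establish, uniformly in $\tau\geq 0$,
\begin{align*}
\int_{\mathbb{R}^3}(1+|x-y|)^{-\mu} B_{3(1-2^{-k})}(|y|,\tau)\,dy &\lesssim B_{3(1-2^{-k})}(|x|,\tau),\\
\int_{\mathbb{R}^3}(1+|x-y|)^{-\mu} B_{2(1-2^{-k})}(|y|-\mathbf{c}\tau,\tau)\,dy &\lesssim B_{2(1-2^{-k})}(|x|-\mathbf{c}\tau,\tau).
\end{align*}
The first is proved by splitting $\mathbb{R}^3$ into $\{|y|\leq|x|/2\}$ (where $|x-y|\gtrsim|x|$ and a direct power-counting using $\mu\geq 2\cdot 3(1-2^{-k})$ closes the estimate) and $\{|y|>|x|/2\}$ (where $B$ is pointwise comparable to its value at $|x|$, while $\int(1+|x-y|)^{-\mu}\,dy$ converges since $\mu>3$). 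The second follows from an analogous splitting based on whether $|x-y|\leq\max(|w|,\sqrt{1+\tau})/2$, with $w=|x|-\mathbf{c}\tau$; here the spatial $L^1$ mass of $B_{2(1-2^{-k})}(\cdot-\mathbf{c}\tau,\tau)$ grows like $(1+\tau)^{5/2}$, and balancing this against the decay $(1+|x-y|)^{-\mu}$ in the far region demands $\mu\geq 5$, which is again guaranteed by $\mu\geq 21/4$.

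\textbf{Time step.} After Step~1, both cases reduce to a one-dimensional bound
\begin{equation*}
\int_0^t e^{-c(t-\tau)}(1+\tau)^{-p} B_\alpha(\zeta(\tau),\tau)\,d\tau \lesssim (1+t)^{-p} B_\alpha(\zeta(t),t),
\end{equation*}
with $(\alpha,p,\zeta(\tau))=(3(1-2^{-k}),3,|x|)$ for part (1) and $(\alpha,p,\zeta(\tau))=(2(1-2^{-k}),4-2^{-k},|x|-\mathbf{c}\tau)$ for part (2). I split at $\tau=t/2$. On $[t/2,t]$, set $s=t-\tau$; using $(1+t-s)\sim(1+t)$ together with the Peetre-type inequality
\begin{equation*}
B_\alpha(\zeta(t-s),t-s)\lesssim(1+s)^{2\alpha}B_\alpha(\zeta(t),t),\qquad s\in[0,t/2],
\end{equation*}
(which in part (1) is just monotonicity of $\tau\mapsto B_\alpha(|x|,\tau)$, and in part (2) follows from $(|x|-\mathbf{c}(t-s))^2\leq 2(|x|-\mathbf{c}t)^2+2\mathbf{c}^2s^2$), the estimate reduces to $\int_0^\infty e^{-cs}(1+s)^{2\alpha}\,ds<\infty$. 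On $[0,t/2]$, $e^{-c(t-\tau)}\leq e^{-ct/2}$ decays faster than any polynomial in $t$, so together with $\int_0^\infty(1+\tau)^{-p}\,d\tau<\infty$ (since $p>1$) and $B_\alpha\leq 1$, this piece is absorbed into the target $(1+t)^{-p}B_\alpha(\zeta(t),t)$ uniformly in $x$.

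\textbf{Main obstacle.} The main difficulty is the spatial convolution in part (2). Unlike the diffusion wave in part (1), the Huygens wave is supported on a thin shell of radius $\mathbf{c}\tau$ with $L^1$ mass growing like $(1+\tau)^{5/2}$, so the crude estimate $\|B_{2(1-2^{-k})}(\cdot-\mathbf{c}\tau,\tau)\|_{L^1}\cdot\|(1+|\cdot|)^{-\mu}\|_{L^\infty}$ diverges. A careful near/far splitting relative to the moving shell position $\mathbf{c}\tau$ (rather than relative to the origin, as in part (1)) is required, and it is exactly this that forces the sharper threshold $\mu\geq 5$, making the condition $k\geq 3$ sharp for the argument.
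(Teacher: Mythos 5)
Your overall architecture — factor the space-time convolution into a fixed-$\tau$ spatial convolution estimate followed by a one-dimensional time integral — is genuinely different from the paper's proof (Section 5), which performs no such factorization but instead decomposes the $(x,t)$ half-space into regions ($\{|x|\leq\sqrt{1+t}\}$, the sound cone neighborhood, the exterior $\{|x|\geq t+\sqrt{1+t}\}$, and two intermediate annuli) and runs a tailored splitting of the space-time integral in each. Your spatial step is sound: the near/far splitting works for the diffusion wave with $\mu\geq\max\{2N,3\}$, and the moving-shell analysis for the Huygens wave indeed pins down the sharp threshold $\mu\geq 5$ (the worst regime being $|x|-\mathbf{c}\tau\sim\sqrt{1+\tau}$, where the shell's $L^1$ mass $(1+\tau)^{5/2}$ must be beaten), consistent with the paper's hypothesis $\hat P\geq\max\{N+5/2,\,2N,\,5\}$. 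Where the paper needs separate work in five regions, your method buys a cleaner modular structure at the cost of proving a stronger intermediate statement (uniformity in $\tau$).

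However, your time step contains a genuine error in the $\tau\in[0,t/2]$ range. You bound the integrand there using $B_\alpha\leq 1$ and claim the resulting $O\bigl(e^{-ct/2}\bigr)$ term is ``absorbed into the target $(1+t)^{-p}B_\alpha(\zeta(t),t)$ uniformly in $x$.'' This cannot be: after discarding $B_\alpha$ your bound is independent of $x$, while the target $(1+t)^{-p}B_\alpha(\zeta(t),t)$ tends to zero as $|x|\to\infty$ for fixed $t$, so no constant makes the inequality hold uniformly in $x$. The repair is available within your own toolkit: for part (1), $B_\alpha(|x|,\tau)$ is non-decreasing in $\tau$, so $B_\alpha(|x|,\tau)\leq B_\alpha(|x|,t)$ throughout $[0,t]$ and the factor survives to be pulled out; for part (2), your Peetre-type inequality $B_\alpha(\zeta(t-s),t-s)\lesssim(1+s)^{2\alpha}B_\alpha(\zeta(t),t)$ actually holds for all $s\in[0,t]$, not merely $s\in[0,t/2]$, and since
\begin{equation*}
\int_{t/2}^{t}e^{-cs}(1+s)^{2\alpha}(1+t-s)^{-p}\,ds\;\leq\;e^{-ct/2}(1+t)^{2\alpha}\int_{0}^{t/2}(1+\tau)^{-p}\,d\tau\;\lesssim\;(1+t)^{-p},
\end{equation*}
the remote-time contribution still carries the factor $B_\alpha(\zeta(t),t)$ and the estimate closes. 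Once this is corrected, the proposal gives a valid alternative proof of the lemma.
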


Therefore, we confirm the structure of the nonlinear part.

\begin{theorem}\label{thm:main}
Let $0\leq \gamma <1$, $k\geq 3$, $\beta >3/2+\gamma $ and let $\theta
_{0},\theta _{1}\geq 6(1-\gamma )(1-2^{-k})$. Assume that $f_{0}\left(
x,\cdot \right) \in L_{\xi ,\hat{\beta}}^{\infty }$ and $f_{0}$ is compactly
supported in the variable $x$ for all $\xi $, namely,
\begin{equation*}
f_{0}\left( x,\xi \right) \equiv 0\text{ for }\left\vert x\right\vert \geq 1%
\text{, }\xi \in \mathbb{R}^{3}\text{,}
\end{equation*}%
where $\hat{\beta}=\max \{\beta +\theta _{0},\beta +2^{k}(\theta _{1}+\gamma
)\}$. If $\eps>0$ is sufficiently small, then the solution $f$ of the
nonlinear problem $(\ref{nonlinear})$ can be decomposed as $f=f_{\ast
}+f_{\sharp }$, where
\begin{align*}
\left\vert f_{\ast }\right\vert _{L_{\xi ,\beta }^{\infty }}& \lesssim \eps %
e^{-ct}\left\langle x\right\rangle ^{-\frac{\theta _{0}}{1-\gamma }} \\
& \quad +\eps^{2}\left[ \left( 1+t\right) ^{-3}B_{3\left( 1-2^{-k}\right)
}(|x|,t)+\left( 1+t\right) ^{-(4-2^{-k})}B_{2(1-2^{-k})}(|x|-\mathbf{c}t,t)%
\right]
\end{align*}%
and
\begin{align*}
\left\vert f_{\sharp }\right\vert _{L_{\xi ,\beta }^{\infty }}& \lesssim \eps%
\left[\Phi_{m}(x,t)+(1+t+|x|)^{-\frac{\sigma }{1-\gamma }}\right]
\\
& \quad +\eps^{2}\left[ \left( 1+t\right) ^{-2}B_{3/2}(|x|,t)+\left(
1+t\right) ^{-\frac{5}{2}}B_{1}(|x|-\mathbf{c}t,t)\right] \text{,}
\end{align*}%
here $m$ and $\sigma $ can be any large positive numbers.
\end{theorem}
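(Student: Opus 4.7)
The plan is to close the nonlinear problem by substituting the particle–fluid linear estimates of Theorem \ref{PF-estimate} into the coupled integral system
\begin{align*}
f_{\ast}&=\eps\,\mathbb{G}_{\ast}^{t}f_{0}+\int_{0}^{t}\mathbb{G}_{\ast}^{t-\tau}\Gamma(f,f)(\tau)\,d\tau,\\
f_{\sharp}&=\eps\,\mathbb{G}_{\sharp}^{t}f_{0}+\int_{0}^{t}\mathbb{G}_{\sharp}^{t-\tau}\Gamma(f,f)(\tau)\,d\tau,
\end{align*}
together with a polynomial-type ansatz matching exactly the structure in the statement. I would require $|f_{\sharp}|_{L^{\infty}_{\xi,\beta}}$ to be controlled by the asserted bound involving $\Phi_{m}$, $B_{3/2}(|x|,t)$ and $B_{1}(|x|-\mathbf{c}t,t)$, and $|f_{\ast}|_{L^{\infty}_{\xi,\beta}}$ by the exponentially localized term plus the polynomial wave term with exponents $3(1-2^{-k})$ and $2(1-2^{-k})$. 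Adding the two ansatz bounds yields the working estimate $|f|_{L^{\infty}_{\xi,\beta}}\lesssim\eps[(1+t)^{-3/2}B_{3/2}(|x|,t)+(1+t)^{-2}B_{1}(|x|-\mathbf{c}t,t)]$, which is the quantity that will be fed into every nonlinear term.

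The verification of the ansatz for $f_{\sharp}$ is the easier half. I would use Lemma \ref{basic-Gamma} to obtain $|\Gamma(f,f)|_{L^{\infty}_{\xi,\beta-\gamma}}\lesssim|f|^{2}_{L^{\infty}_{\xi,\beta}}$, then apply Theorem \ref{PF-estimate}, exploiting the extra $(1+t)^{-1/2}$ gain from the microscopic projection property $\mathrm{P}_{1}\Gamma(f,f)=\Gamma(f,f)$, together with the five convolution identities of the Nonlinear wave interaction lemma for $f_{\sharp}$. These identities close exactly into the $\eps^{2}$ Huygens and diffusion waves claimed in the statement.

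The difficulty concentrates on $f_{\ast}$, because Theorem \ref{PF-estimate} forces a loss of velocity weight whenever one requests spatial decay of $\mathbb{G}_{\ast}^{t}$: an exponent $\beta+\theta_{0}$ on the initial datum is traded for the factor $\langle x\rangle^{-\theta_{0}/(1-\gamma)}$, and an analogous trade-off occurs at each nonlinear step. To absorb this I would exploit the interpolation
\begin{equation*}
|\Gamma(f,f)|_{L^{\infty}_{\xi,\beta+\theta_{1}}}\lesssim|f|^{\,2-2^{1-k}}_{L^{\infty}_{\xi,\beta}}\,|f|^{\,2^{1-k}}_{L^{\infty}_{\xi,\beta+2^{k}(\theta_{1}+\gamma)}},
\end{equation*}
and control the high-weight factor by the global-in-time energy estimate of \cite{[LinWangLyuWu]}, $\|f\|_{L^{\infty}_{\xi,\hat{\beta}}L^{\infty}_{x}}\lesssim\eps(1+t)^{-3/2}$; this is precisely why one sets $\hat{\beta}=\max\{\beta+\theta_{0},\beta+2^{k}(\theta_{1}+\gamma)\}$. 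After this reduction, the ansatz for $f_{\ast}$ is verified by the two convolution estimates of Lemma \ref{lem:wave-int-*}, in which $e^{-ct}(1+|x|)^{-\theta_{1}/(1-\gamma)}$ acts essentially as a Dirac mass against the polynomial wave profile.

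The main obstacle is the simultaneous balancing of parameters. One needs $k\ge 3$ so that the interpolation exponent $2-2^{1-k}$ is large enough for the quadratic wave convolutions to close; $\theta_{0},\theta_{1}\ge 6(1-\gamma)(1-2^{-k})$ so that the spatial weights $(1+|x|)^{-\theta_{j}/(1-\gamma)}$ are integrable along the sound cone as needed in Lemma \ref{lem:wave-int-*}; and finally $\hat{\beta}$ large enough for the a priori high-weight estimate of \cite{[LinWangLyuWu]} to be available. Once these parameters are fixed consistently and $\eps$ is taken sufficiently small, a standard contraction/continuity argument on the function space defined by the ansatz closes the coupled system $(f_{\ast},f_{\sharp})$ and yields the stated decomposition of $f$.
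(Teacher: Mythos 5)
Your proposal is correct and follows essentially the same route as the paper: the coupled integral system for $(f_{\ast},f_{\sharp})$, the $\mathrm{P}_1$-gain for $f_{\sharp}$ with the five convolution identities, the interpolation $|\Gamma(f,f)|_{L^\infty_{\xi,\beta+\theta_1}}\lesssim |f|^{2-2^{1-k}}_{L^\infty_{\xi,\beta}}|f|^{2^{1-k}}_{L^\infty_{\xi,\beta+2^k(\theta_1+\gamma)}}$ closed against the a priori estimate from \cite{[LinWangLyuWu]}, and Lemma \ref{lem:wave-int-*} for $f_{\ast}$. The only departure is cosmetic: you phrase the final step as an explicit contraction/continuity argument, which the paper leaves implicit after verifying the ansatz.
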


\begin{remark}
	One can choose $\theta_{0}=2^{k}(\theta _{1}+\gamma)$ and therefore
	$$
	\theta_{1}=2^{-k}\theta_{0}-\gamma\geq 6(1-\gamma)(1-2^{-k}).
	$$
	If we take $k=3$, then $\theta_{0}\geq 42-34\gamma$. This reduces the assumption of the initial condition.
\end{remark}

\subsection{Structure outside the finite Mach number region}

To improve the asymptotic behavior outside the finite Mach number region, we
take the weight function%
\begin{equation*}
w=c_{1}\left[ \delta \left( \left\langle x\right\rangle -Mt\right) \right] ^{%
\frac{p}{1-\gamma }}\left( 1-\chi \left( \frac{\delta \left( \left\langle
x\right\rangle -Mt\right) }{\left\langle \xi \right\rangle _{D}^{1-\gamma }}%
\right) \right) +c_{2}\left\langle \xi \right\rangle _{D}^{p}\chi \left(
\frac{\delta \left( \left\langle x\right\rangle -Mt\right) }{\left\langle
\xi \right\rangle _{D}^{1-\gamma }}\right)
\end{equation*}%
into account again, where $\delta >0$, $p\geq 2$, $D\geq 1$, and $M>0$. The
constants $c_{1}$, $c_{2}>0$ are chosen to meet the requirement $\partial
_{t}w\leq 0$. In \cite{[LinWangLyuWu]}, we have proved that
\begin{equation*}
\left\vert f\left( t,x,\xi \right) \right\vert _{L_{\xi ,\beta
}^{2}}\lesssim \varepsilon \left( 1+t\right) ^{1/2}\left( \left\langle
x\right\rangle +Mt\right) ^{-\frac{p}{1-\gamma }}\left\Vert f_{0}\right\Vert
_{L_{\xi ,p+\beta +\gamma /2}^{\infty }L_{x}^{\infty }}\text{ for }%
\left\vert x\right\vert >2Mt\text{,}
\end{equation*}%
where $\delta >0$ sufficiently small, $M$, $D>1$ sufficiently large, and $%
p\geq 2$. The largeness of $M$ arises from the regularization estimate of
the remainder. In fact, we find out that the constant $M$ is not needed to
be sufficiently large, and we can choose $\mathbf{c}<M<\mathbf{c+}1$ to be
as close to $\mathbf{c}$ as we desire, by more exquisite calculation for the
regularization estimate of the remainder. In the following, we shall sketch
the essential point of the proof and the rest is just followed line by line
as those in \cite{[LinWangLyuWu]}. Our improvement is stated as below:

\begin{theorem}\label{thm:space-asy}
Let $0<\leq\gamma <1$ and let $f$ be a solution to the Boltzmann equation $%
\left( 1.3\right) $ with initial data $\varepsilon f_{0}$. If $\left\vert
f_{0}\right\vert _{L_{x}^{\infty }}\in L_{\xi ,\hat{\beta }}^{\infty }$, where $\hat{\beta }=p+\beta +\gamma /2$
for $p\geq 2$ and $\beta >3/2$, then for any $\delta >0$, there exists a
constant $M=\left( \mathbf{c}+\delta /2\right) $ such that
\begin{equation*}
\left\vert f\right\vert _{L_{\xi ,\beta }^{\infty }}\leq C_{\delta
}\varepsilon \left( 1+t\right) ^{1/2}\left( \left\langle x\right\rangle
+t\right) ^{-\frac{p}{1-\gamma }}\left\Vert f_{0}\right\Vert _{L_{\xi
,\hat{\beta }}^{\infty }L_{x}^{\infty }}
\end{equation*}%
when $\left\vert x\right\vert >\left( M+\frac{\delta }{2}\right) t=\left(
\mathbf{c}+\delta \right) t$. The constant $C_{\delta }>0$ is independent of time and $C_{\delta
}\rightarrow \infty $ as $\delta \rightarrow 0$.
\end{theorem}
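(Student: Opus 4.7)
The plan is to follow the weighted $L^{2}$-energy framework of \cite{[LinWangLyuWu]} with the weight function $w(t,x,\xi)$ specified above, and to sharpen only the step where the Mach number $M$ enters, so that $M$ may be taken as close to $\mathbf{c}$ as desired. First, I would verify that the constants $c_{1},c_{2}>0$ can still be chosen to ensure $\partial_{t}w\leq 0$ when $M=\mathbf{c}+\delta/2$. On the outer region $\delta(\langle x\rangle-Mt)\geq \langle\xi\rangle_{D}^{1-\gamma}$ the weight reduces to $c_{1}[\delta(\langle x\rangle-Mt)]^{p/(1-\gamma)}$, for which $\partial_{t}w<0$ is automatic; on the inner region $w\sim c_{2}\langle\xi\rangle_{D}^{p}$ has no explicit $t$-dependence; only the transition strip requires care, and balancing the ratio $c_{1}/c_{2}$ there absorbs the cutoff derivative regardless of the size of $M$. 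Thus the weight construction is valid for every $M>\mathbf{c}$.

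Second, I would test the nonlinear equation against $w^{2}f$ to obtain
\begin{equation*}
\frac{1}{2}\frac{d}{dt}\|wf\|_{L^{2}}^{2}+\|w\,\nu^{1/2}(\xi)f\|_{L^{2}}^{2}\leq \langle w^{2}Kf,f\rangle+\langle w^{2}\Gamma(f,f),f\rangle,
\end{equation*}
with the transport and $\partial_{t}w$ terms either cancelling or contributing favourably. The $\Gamma$-term is closed by the $L_{\xi,\beta}^{\infty}$ estimate for $f$ already supplied by Theorem \ref{thm:main}, and the $K$-term is handled by Grad's splitting: its large-velocity part is absorbed into $\nu(\xi)$ via the gain $|Kg|_{L_{\xi,\tau+2-\gamma}^{q}}\lesssim|g|_{L_{\xi,\tau}^{q}}$, while its small-velocity part is controlled by the particle--fluid decomposition of Theorem \ref{PF-estimate}.

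The main obstacle is the regularization estimate for the remainder $\mathcal{R}^{(\hat{N})}$, where one must propagate the $w$-weighted norm from the source $Kh^{(\hat{N})}$ without demanding a large $M$. The Enhanced Mixture Lemma supplies arbitrary velocity decay together with spatial decay $(1+|x|)^{-\hat{M}}$ for $Kh^{(\hat{N})}$; combining this with the sharp fluid estimate of Theorem \ref{PF-estimate}, whose Huygens component $B_{m}(|x|-\mathbf{c}t,t)$ gives arbitrarily fast polynomial decay throughout the region $|x|\geq(\mathbf{c}+\delta)t$, shows that the source in the weighted estimate is compatible with $w$ for any $M>\mathbf{c}$; this is the point where the previous requirement of large $M$ is removed. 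Once the weighted $L^{2}$ bound is in place, an $L^{2}\rightarrow L^{\infty}$ upgrade via the Duhamel representation and Proposition \ref{prop: Derivatives of kernel k} converts it into the desired $L_{\xi,\beta}^{\infty}$ bound, with the prefactor $(1+t)^{1/2}$ arising from the Sobolev-type embedding used in the upgrade, and the constant $C_{\delta}$ diverging as $\delta\to 0$ because the transition strip of $w$ collapses and the admissible ratio $c_{1}/c_{2}$ degenerates. The remaining steps are identical, line by line, to \cite{[LinWangLyuWu]}.
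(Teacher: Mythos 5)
There is a genuine gap: you have misidentified where the large-$M$ requirement actually enters, and consequently you have not produced the observation that removes it. In the weighted energy estimate the problematic term is not the regularization of the remainder source $Kh^{(\hat N)}$ (which the Enhanced Mixture Lemma handles for any $M$), but the convection term
\[
\int_{\mathbb{R}^3}\bigl\langle (\partial_t w+\xi\cdot\nabla_x w)w^{-1}g,\,g\bigr\rangle_\xi\,dx,
\]
whose leading coefficient on the outer region $H_+$ is $-\bigl(M-\tfrac{\xi\cdot x}{\langle x\rangle}\bigr)\bigl[\delta(\langle x\rangle-Mt)\bigr]^{-1}$. Pointwise in $\xi$ this factor has no sign unless $M$ dominates $|\xi|$, which is exactly why the earlier argument needed $M$ large. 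The paper's improvement is the spectral observation that on the macroscopic component one may replace the pointwise bound by the operator norm of the projected transport operator: using \eqref{Eigen function},
\[
\mathrm{P}_0\Bigl(\frac{\xi\cdot x}{|x|}\Bigr)\mathrm{P}_0 g=\sum_{j=0}^4 a_j\langle g,E_j\rangle E_j,\qquad |a_j|\le \mathbf{c},
\]
so the dangerous term contributes $-(M-\mathbf{c})$ on $\mathrm{P}_0 g$, while the microscopic part $\mathrm{P}_1 g$ is absorbed by the coercivity of $L$ after a Cauchy--Schwarz split with small $\hat\epsilon$. This is the single step that allows $M=\mathbf{c}+\delta/2$, and it is entirely absent from your proposal; invoking the Huygens decay of Theorem \ref{PF-estimate} does not touch the convection term because that estimate is unweighted and does not interact with $\xi\cdot\nabla_x w$.

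A secondary structural gap: the paper does not test the full nonlinear equation against $w^2 f$. It first conjugates by $w$ to get the equation for $u=wf$, then performs a wave--remainder (Picard) decomposition \emph{of the weighted equation} into $u^{(j)}$ and $\mathcal{R}_w^{(\hat N)}$, and runs a higher-order weighted energy estimate on $g=\mathcal{R}_w^{(\hat N)}$ (the functional $E(t)$ involves $\nabla_x g$ and $\nabla_x^2 g$ with small parameter $\alpha=O(\delta^2)$), together with Lemma \ref{Lemma for Kw-K} for $K_w-K$. Your sketch conflates this with the unweighted decomposition $\mathbb{G}_\ast^t,\ \mathbb{G}_\sharp^t$ of Theorem \ref{PF-estimate}, and the claimed "$L^2\to L^\infty$ upgrade via Sobolev" is not how the $(1+t)^{1/2}$ prefactor arises; it comes from the a priori exponent $A=1/2$ established in the bootstrap of \cite{[LinWangLyuWu]}. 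As written, the proposal would not close.There is a genuine gap: you have misidentified where the large-$M$ requirement actually enters, and consequently you have not produced the observation that removes it. In the weighted energy estimate the problematic term is not the regularization of the remainder source $Kh^{(\hat N)}$ (which the Enhanced Mixture Lemma handles for any $M$), but the convection term
\[
\int_{\mathbb{R}^3}\bigl\langle (\partial_t w+\xi\cdot\nabla_x w)w^{-1}g,\,g\bigr\rangle_\xi\,dx,
\]
whose leading coefficient on the outer region $H_+$ is $-\bigl(M-\tfrac{\xi\cdot x}{\langle x\rangle}\bigr)\bigl[\delta(\langle x\rangle-Mt)\bigr]^{-1}$. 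Pointwise in $\xi$ this factor has no sign unless $M$ dominates $|\xi|$, which is exactly why the earlier argument needed $M$ large. The paper's improvement is the spectral observation that on the macroscopic component one may replace the pointwise bound by the operator norm of the projected transport operator: using \eqref{Eigen function},
\[
\mathrm{P}_0\Bigl(\frac{\xi\cdot x}{|x|}\Bigr)\mathrm{P}_0 g=\sum_{j=0}^4 a_j\langle g,E_j\rangle E_j,\qquad |a_j|\le \mathbf{c},
\]
so the dangerous term contributes $-(M-\mathbf{c})$ on $\mathrm{P}_0 g$, while the microscopic part $\mathrm{P}_1 g$ is absorbed by the coercivity of $L$ after a Cauchy--Schwarz split with small $\hat\epsilon$. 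This is the single step that allows $M=\mathbf{c}+\delta/2$, and it is entirely absent from your proposal; invoking the Huygens decay of Theorem \ref{PF-estimate} does not touch the convection term because that estimate is unweighted and does not interact with $\xi\cdot\nabla_x w$.

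A secondary structural gap: the paper does not test the full nonlinear equation against $w^2 f$. It first conjugates by $w$ to get the equation for $u=wf$, then performs a wave--remainder (Picard) decomposition \emph{of the weighted equation} into $u^{(j)}$ and $\mathcal{R}_w^{(\hat N)}$, and runs a higher-order weighted energy estimate on $g=\mathcal{R}_w^{(\hat N)}$ (the functional $E(t)$ involves $\nabla_x g$ and $\nabla_x^2 g$ with small parameter $\alpha=O(\delta^2)$), together with Lemma \ref{Lemma for Kw-K} for $K_w-K$. Your sketch conflates this with the unweighted decomposition $\mathbb{G}_\ast^t,\ \mathbb{G}_\sharp^t$ of Theorem \ref{PF-estimate}, and the claimed $L^2\to L^\infty$ upgrade via Sobolev is not how the $(1+t)^{1/2}$ prefactor arises; it comes from the a priori exponent $A=1/2$ established in the bootstrap of \cite{[LinWangLyuWu]}. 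As written, the proposal would not close.
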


Now let $u=wf$ and we do a wave-remainder decomposition for the weighted
equation%
\begin{equation*}
\partial _{t}u+\xi \cdot \nabla _{x}u-\left( \partial _{t}w+\xi \cdot \nabla
_{x}w\right) w^{-1}u=L_{w}u+\Gamma _{w}\left( u,f\right)
\end{equation*}%
as follows:
\begin{equation*}
\left\{
\begin{array}{l}
\partial _{t}u^{\left( 0\right) }+\xi \cdot \nabla _{x}u^{\left( 0\right) }+%
\widetilde{\nu }\left( \xi \right) u^{\left( 0\right) }=\Gamma _{w}\left(
u,f\right) \text{,}%
\vspace {3mm}
\\
u^{\left( 0\right) }\left( 0,x,\xi \right) =\varepsilon f_{0w}\text{,}%
\end{array}%
\right.
\end{equation*}%
and%
\begin{equation*}
\left\{
\begin{array}{l}
\partial _{t}u^{\left( j\right) }+\xi \cdot \nabla _{x}u^{\left( j\right) }+%
\widetilde{\nu }\left( \xi \right) u^{\left( j\right) }=K_{w}u^{\left(
j-1\right) }%
\vspace {3mm}
\\
u^{\left( j\right) }\left( 0,x,\xi \right) =0%
\end{array}%
\right.
\end{equation*}%
for $j\geq 1$, where
\begin{equation*}
\widetilde{\nu }\left( \xi \right) =-\left( \partial _{t}w+\xi \cdot \nabla
_{x}w\right) w^{-1}+\nu \left( \xi \right) \text{,}
\end{equation*}%
$L_{w}h:=wLw^{-1}h=\nu \left( \xi \right) h+K_{w}h$, $K_{w}\left( x,\xi
\right) :=w\left( t,x,\xi \right) Kw^{-1}\left( t,x,\xi _{\ast }\right) $, $%
\Gamma _{w}\left( h,f\right) :=w\Gamma \left( w^{-1}h,f\right) $. After
choosing $\delta >0$ sufficiently small, we have%
\begin{equation*}
\widetilde{\nu }\left( \xi \right) =-\left( \partial _{t}w+\xi \cdot \nabla
_{x}w\right) w^{-1}+\nu \left( \xi \right) \geq \frac{3}{4}\nu \left( \xi
\right) \text{.}
\end{equation*}%
Let $\hat{N}\geq 7$ be fixed. The wave part and the remainder part are
defined as
\begin{equation*}
\mathcal{W}_{w}^{(\hat{N}) }=\sum_{j=0}^{\hat{N}%
}u^{\left( j\right) }\text{, \ \ \ }\mathcal{R}_{w}^{( \hat{N}%
) }=u-\mathcal{W}^{(\hat{N}) }\text{,}
\end{equation*}%
with $\mathcal{R}_{w}^{(\hat{N}) }$ solving the equation%
\begin{equation*}
\left\{
\begin{array}{l}
\partial _{t}\mathcal{R}_{w}^{(\hat{N}) }+\xi \cdot \nabla
_{x}\mathcal{R}_{w}^{(\hat{N}) }-\left( \partial _{t}w+\xi
\cdot \nabla _{x}w\right) w^{-1}\mathcal{R}_{w}^{(\hat{N})
}=L_{w}\mathcal{R}_{w}^{(\hat{N}) }+K_{w}u^{\left( \hat{N%
}\right) }%
\vspace {3mm}
\\
\mathcal{R}_{w}^{(\hat{N}) }\left( 0,x,\xi \right) =0\text{.}%
\end{array}%
\right.
\end{equation*}%
For convenience of notations, let $g=\mathcal{R}_{w}^{( \hat{N}%
) }$ and $U=K_{w}u^{(\hat{N}) }$. Let $T>0$ be any
finite number and $\beta >3/2$. Define%
\begin{equation*}
\sup_{0\leq t\leq T}\left( 1+t\right) ^{-A}\left\Vert u\right\Vert _{L_{\xi
,\beta }^{\infty }L_{x}^{\infty }}=C_{u,T}^{\infty }\text{, }\sup_{0\leq
t\leq T}\left\Vert u\right\Vert _{L_{\xi ,\beta }^{\infty
}L_{x}^{2}}=C_{u,T}^{2}\text{,}
\end{equation*}%
\begin{equation*}
\sup_{t\geq 0}\left( 1+t\right) ^{-3/2}\left\Vert \left\langle \xi
\right\rangle ^{p}f\right\Vert _{L_{\xi ,\beta }^{\infty }L_{x}^{\infty
}}=C_{f}^{\infty }\text{,}
\end{equation*}%
where the constant $A>0$ will be eventually determined via the a prior
estimate. (In fact, we have obtained $A=1/2$ independent of time $T$ in \cite%
{[LinWangLyuWu]}.)

To do the regularization estimate of the remainder, we need some estimates
for the wave part and the weighted operator $K_{w}$ as below.

\begin{lemma}[$L_{\protect\xi ,\protect\beta }^{\infty }L_{x}^{\infty }$%
-estimate of $u^{(j)}$]
\label{Wave part}Let $p\geq 2$ and $f_{0w}\in L_{\xi ,\beta }^{\infty
}L_{x}^{\infty }$ with $\beta >3/2$. Then for $0\leq t\leq T$,%
\begin{equation*}
\left\Vert u^{\left( j\right) }\right\Vert _{L_{\xi ,\beta }^{\infty
}L_{x}^{\infty }}\lesssim \varepsilon \frac{t^{j}}{j!}\exp \left( -\frac{\nu
_{0}t}{2}\right) \left\Vert f_{0w}\right\Vert _{L_{\xi ,\beta }^{\infty
}L_{x}^{\infty }}+\left( 1+t\right) ^{-1}C_{u,T}^{\infty }C_{f}^{\infty }%
\text{.}
\end{equation*}
\end{lemma}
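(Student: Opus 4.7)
The argument is a straightforward induction on $j$ via the Duhamel formula against the damped transport semigroup $\mathbb{S}_w^t g := e^{-\widetilde{\nu}(\xi)t}\,g(x-\xi t,\xi)$. The decisive input, already derived above, is the improved damping $\widetilde{\nu}(\xi) \geq \tfrac{3}{4}\nu(\xi) \geq \tfrac{3\nu_0}{4}$, which leaves a $\tfrac{\nu_0}{4}$ cushion beyond the desired $e^{-\nu_0 t/2}$ decay; this cushion will absorb the polynomial factors in $t$ generated by the iteration.

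For $j=0$, I would write
\[
u^{(0)}(t) = \varepsilon\,\mathbb{S}_w^t f_{0w} + \int_0^t \mathbb{S}_w^{t-\tau}\Gamma_w(u,f)(\tau)\,d\tau.
\]
The linear piece is immediately bounded by $\varepsilon\,e^{-3\nu_0 t/4}\|f_{0w}\|_{L^\infty_{\xi,\beta}L^\infty_x}$. For the nonlinear piece, I apply the weighted counterpart of Lemma \ref{basic-Gamma}, whose proof is identical to that of \cite{[LinWangLyuWu]} once $\delta$ is taken small enough that the weight ratio $w(t,x,\xi)/w(t,x,\eta)$ is absorbed by the Gaussian decay of $k(\xi,\eta)$; this yields
\[
|\Gamma_w(u,f)|_{L^\infty_{\xi,\beta}} \lesssim \nu(\xi)\,|u|_{L^\infty_{\xi,\beta}}\,|\langle\xi\rangle^p f|_{L^\infty_{\xi,\beta}}.
\]
Substituting the defining bounds of $C_{u,T}^\infty$ and $C_f^\infty$ (with the already-established $A=1/2$) reduces the matter to controlling $\int_0^t \nu(\xi)\,e^{-(3/4)\nu(\xi)(t-\tau)}(1+\tau)^{-1}d\tau$, which is $\lesssim (1+t)^{-1}$ by splitting at $\tau=t/2$ and using $\nu(\xi)\geq\nu_0>0$.

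For $j\geq 1$, the Duhamel formula reads $u^{(j)}(t) = \int_0^t \mathbb{S}_w^{t-\tau}K_w u^{(j-1)}(\tau)\,d\tau$, and $K_w$ is bounded on $L^\infty_{\xi,\beta}L^\infty_x$ by the same small-$\delta$ weight-ratio argument. Feeding in the inductive hypothesis produces two terms: the $(1+\tau)^{-1}$ piece propagates via the split-integral estimate of the base case, while the initial-data piece is handled by the elementary algebraic identity
\[
\int_0^t e^{-(3\nu_0/4)(t-\tau)}\,\frac{\tau^{j-1}}{(j-1)!}\,e^{-\nu_0\tau/2}\,d\tau \leq e^{-\nu_0 t/2}\,\frac{t^j}{j!},
\]
which follows from the pointwise bound $-(3/4)(t-\tau)-\tau/2 = -t/2-(t-\tau)/4 \leq -t/2$ for $\tau\in[0,t]$ and then integrating $\tau^{j-1}/(j-1)!$ directly. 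This trades exactly one power of $t/j$ per iteration while preserving the $e^{-\nu_0 t/2}$ decay, producing the claimed form. The only nontrivial technical points are the pair of weighted operator bounds for $\Gamma_w$ and $K_w$; since these are already worked out in \cite{[LinWangLyuWu]}, no essentially new obstacle appears.
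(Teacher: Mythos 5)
Your proposal is essentially correct and follows the natural Duhamel--iteration route. In fact, the paper does not prove this lemma at all; it is taken verbatim from \cite{[LinWangLyuWu]}, so your reconstruction supplies the argument the paper omits, and the crucial arithmetic identity
\[
\int_0^t e^{-\frac{3\nu_0}{4}(t-\tau)}\,\frac{\tau^{j-1}}{(j-1)!}\,e^{-\frac{\nu_0}{2}\tau}\,d\tau \le e^{-\frac{\nu_0}{2}t}\,\frac{t^j}{j!}
\]
is correct (from $-\tfrac{3}{4}(t-\tau)-\tfrac{1}{2}\tau = -\tfrac{1}{2}t-\tfrac{1}{4}(t-\tau)\le -\tfrac{1}{2}t$), as is the split at $\tau=t/2$ giving the $(1+t)^{-1}$ factor.

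Two small imprecisions are worth noting. First, $\widetilde{\nu}$ depends on $(t,x,\xi)$, not on $\xi$ alone, so $\mathbb{S}_w^t$ carries a path-integral damping factor $\exp\left(-\int_0^t \widetilde{\nu}(s,x-\xi(t-s),\xi)\,ds\right)$ rather than $e^{-\widetilde{\nu}(\xi)t}$; this does not affect the proof because you only use the uniform lower bound $\widetilde{\nu}\ge \tfrac{3}{4}\nu(\xi)\ge\tfrac{3\nu_0}{4}$. Second, you attribute the control of the weight ratio inside $\Gamma_w$ to ``the Gaussian decay of $k(\xi,\eta)$,'' but $\Gamma$ is not an integral operator with kernel $k$; the relevant mechanism there is the energy conservation $|\xi'|^2+|\xi_*'|^2=|\xi|^2+|\xi_*|^2$ together with the factor $\mathcal{M}^{1/2}(\xi_*)$ in the collision integrand, which is what transfers the weight $\langle\xi\rangle^p$ onto the second argument and explains the $\|\langle\xi\rangle^p f\|$ appearing in $C_f^\infty$. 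The Gaussian decay of $k$ is the right mechanism for the $K_w$ boundedness in the step $j\ge 1$. These are cosmetic rather than structural; the proof is sound.
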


\begin{lemma}[$L^{2}$-estimate of $u^{(j)}$, $j\geq 1$]
Let $p\geq 2$ and $f_{0w}\in L^{2}$. Then for $0\leq t\leq T$,%
\begin{equation*}
\left\Vert u^{\left( j\right) }\right\Vert _{L^{2}}\lesssim \left(
1+t\right) ^{-\frac{3}{2}}\left[ \varepsilon \left\Vert f_{0w}\right\Vert
_{L_{\xi ,\beta }^{\infty }L_{x}^{2}}+C_{u,T}^{2}C_{f}^{\infty }\right]
\text{.}
\end{equation*}
\end{lemma}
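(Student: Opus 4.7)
The plan is to reduce the estimate inductively to that of $u^{(0)}$ via the Duhamel formula and the uniform exponential damping. Since $\widetilde\nu(\xi)\geq \tfrac{3}{4}\nu(\xi)\geq \tfrac{3}{4}\nu_0$, the semigroup $\mathbb{S}^t_w$ associated with the damped transport $\partial_t+\xi\cdot\nabla_x+\widetilde\nu$ satisfies the pointwise bound $|\mathbb{S}^t_w h(x,\xi)|\leq e^{-\frac{3}{4}\nu_0 t}|h(x-\xi t,\xi)|$, hence contracts in $L^2$ at rate $e^{-ct}$. For $j\geq 1$,
\[ u^{(j)}(t)=\int_0^t \mathbb{S}^{t-s}_w K_w u^{(j-1)}(s)\,ds, \]
and combining the above with $L^2_\xi$-boundedness of $K_w$ (inherited from $K$ once one checks that the ratio $w(t,x,\xi)/w(t,x,\eta)$ is harmless under the piecewise definition of $w$ through a Schur-test argument), one obtains
\[ \|u^{(j)}(t)\|_{L^2}\leq C\int_0^t e^{-c(t-s)}\|u^{(j-1)}(s)\|_{L^2}\,ds, \]
so by induction everything reduces to an $L^2$ bound for $u^{(0)}$.

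For $u^{(0)}$, Duhamel gives $u^{(0)}(t)=\varepsilon\mathbb{S}^t_w f_{0w}+\int_0^t \mathbb{S}^{t-\tau}_w \Gamma_w(u,f)(\tau)\,d\tau$. The initial-data piece is bounded by $\varepsilon e^{-ct}\|f_{0w}\|_{L^2}\lesssim \varepsilon e^{-ct}\|f_{0w}\|_{L^\infty_{\xi,\beta}L^2_x}$, where the second inequality follows from $\beta>3/2$. For the nonlinear source, I would upgrade the pointwise bilinear bound of Lemma \ref{basic-Gamma} using the kernel representation of $\Gamma$ to obtain
\[ \|\Gamma_w(u,f)\|_{L^2_\xi L^2_x}\lesssim \|\nu(\xi)\langle\xi\rangle^{-\beta}\|_{L^2_\xi}\,\|u\|_{L^\infty_{\xi,\beta}L^2_x}\,\|f\|_{L^\infty_{\xi,\beta}L^\infty_x}\lesssim C^2_{u,T}C^\infty_f(1+\tau)^{-3/2}, \]
where the time decay comes from the definition of $C^\infty_f$ (after absorbing the $\langle\xi\rangle^p$ weight into $\beta$), and $\nu\langle\xi\rangle^{-\beta}\in L^2_\xi$ precisely because $\beta>3/2+\gamma$. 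Plugging this into Duhamel and using the standard convolution estimate $\int_0^s e^{-c(s-\tau)}(1+\tau)^{-3/2}\,d\tau\lesssim (1+s)^{-3/2}$ (proved by splitting the integration at $s/2$) gives
\[ \|u^{(0)}(s)\|_{L^2}\lesssim (1+s)^{-3/2}\bigl[\varepsilon\|f_{0w}\|_{L^\infty_{\xi,\beta}L^2_x}+C^2_{u,T}C^\infty_f\bigr], \]
and one further application of the same convolution estimate through the inductive formula yields the claim for all $j\geq 1$.

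The main obstacle is the asymmetric $\Gamma$-estimate above: the control $\|u\|_{L^\infty_{\xi,\beta}L^2_x}$ only bounds $\|u(\cdot,\xi)\|_{L^2_x}$ for each fixed $\xi$, whereas the naive pointwise bilinear bound $|\nu^{-1}\Gamma(w^{-1}u,f)(x,\xi)|\lesssim |w^{-1}u(x,\cdot)|_{L^\infty_{\xi,\beta}}|f(x,\cdot)|_{L^\infty_{\xi,\beta}}$ needs $|u(x,\cdot)|_{L^\infty_{\xi,\beta}}$ pointwise in $x$, and Minkowski's integral inequality runs the wrong direction. The fix is to exploit the kernel representation of the gain part of $\Gamma$ and commute the $L^2_x$ norm inside the $\xi_*$-integration before taking $L^\infty_{\xi,\beta}$, which is essentially the same Minkowski-manipulation that proves $K$ is bounded on $L^\infty_{\xi,\beta}L^2_x$; the weight $w$ enters only through the bounded ratio $w(\xi)/w(\xi')$, whose control is a hypothesis of the weighted energy scheme.
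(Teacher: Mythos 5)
Your proof is correct and matches the approach the paper intends (the lemma is stated without a local proof, deferring to the scheme of \cite{[LinWangLyuWu]}): Duhamel with exponential $L^2$-contraction of the damped transport, $L^2$-boundedness of $K_w$, and the standard convolution bound reduce $j\geq 1$ to $j=0$, while the asymmetric source estimate for $\Gamma_w(u,f)$ is obtained exactly as you describe, by pushing the $L^2_x$ norm through the gain/loss kernels via Minkowski before taking the $\xi$-norms. The one phrase worth sharpening is the ``bounded ratio $w(t,x,\xi)/w(t,x,\xi')$'': since $w$ carries $\langle\xi\rangle_D^p$, the ratio is only polynomially controlled through $\langle\xi\rangle^2\leq\langle\xi'\rangle^2\langle\xi_*'\rangle^2$ and the Gaussian in $\xi_*$, and that polynomial is precisely the $\langle\xi\rangle^p$ weight built into $C_f^\infty$ — which is consistent with, and justifies, your remark about absorbing the weight into $\beta$.
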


\begin{lemma}[Regularization estimate of $u^{(\hat{N}) }$]
\begin{equation*}
\left \Vert u^{(\hat{N}) }\right \Vert _{L_{\xi
}^{2}H_{x}^{2}}\lesssim \left( 1+t\right) ^{-\frac{3}{2}}\left[ \varepsilon
\left \Vert f_{0w}\right \Vert _{L^{2}}+C_{u}^{2}C_{f}^{\infty }\right]
\text{.}
\end{equation*}
\end{lemma}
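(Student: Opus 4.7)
The plan is to unfold the Picard iteration defining $u^{(\hat N)}$ into a mixture-operator representation, then invoke a weighted analogue of the Enhanced Mixture Lemma to gain the two spatial derivatives. Unrolling $u^{(j)}=\int_0^t\mathbb{S}^{t-t_1}_w K_w u^{(j-1)}(t_1)\,dt_1$ down to $u^{(0)}$ and then applying Duhamel's principle to $u^{(0)}$ itself (whose source is $\Gamma_w(u,f)$, with data $\varepsilon f_{0w}$), I obtain
\begin{equation*}
u^{(\hat N)}(t)=\varepsilon\,\mathbf{M}_{w,\hat N}^{t}f_{0w}+\int_{0}^{t}\mathbf{M}_{w,\hat N}^{t-s}\Gamma_{w}(u,f)(s)\,ds,
\end{equation*}
where $\mathbf{M}_{w,\hat N}^{t}$ is the mixture operator built from the damped transport semigroup (with damping $\widetilde\nu\geq \tfrac{3}{4}\nu$) and the weighted gain operator $K_w$.

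Next, I would establish a weighted counterpart of Lemma \ref{Enhanced Mixture Lemma}(1). Since $\widetilde\nu$ is comparable to $\nu$, the exponential damping $e^{-\sum\widetilde\nu(\xi_j)(t_j-t_{j+1})}$ inherits the good $t$- and $\xi$-decay properties used in the unweighted proof, and the kernel of $K_w$, namely $w(t,x,\xi)\,k(\xi,\xi_*)\,w^{-1}(t,x,\xi_*)$, still satisfies the pointwise bounds of Proposition \ref{prop: Derivatives of kernel k} (up to constants) since the weight $w$ only differs by a bounded ratio when $\xi$ and $\xi_*$ are of comparable size, and by exponentially suppressed factors otherwise (from the Gaussian in $k$). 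For the $L_{\xi}^{2}H_{x}^{2}$ estimate I only need $|\alpha|\leq 2$, $|\beta|=0$, $P=0$; the condition $\hat N\geq \max\{1+\gamma,6\}$ is met because $\hat N\geq 7$, so I obtain
\begin{equation*}
\bigl\|\partial_{x}^{\alpha}\mathbf{M}_{w,\hat N}^{t}h_{0}\bigr\|_{L^{2}_{\xi}L^{2}_{x}}\leq C e^{-ct}\|h_{0}\|_{L^{2}_{\xi}L^{2}_{x}}, \qquad |\alpha|\leq 2.
\end{equation*}

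Applying this to the initial-data term yields $\varepsilon e^{-ct}\|f_{0w}\|_{L^{2}}\leq C\varepsilon(1+t)^{-3/2}\|f_{0w}\|_{L^{2}}$. For the nonlinear source I would bound $\|\Gamma_{w}(u,f)(s)\|_{L^{2}}$ by splitting the bilinear form and using Lemma \ref{basic-Gamma}: one factor is controlled in $L^{\infty}_{\xi,\beta}L^{2}_{x}$ by $C_{u,T}^{2}$, while the other factor is controlled in $L^{\infty}_{\xi,\beta+p}L^{\infty}_{x}$ by $(1+s)^{-3/2}C_{f}^{\infty}$ (using the polynomial weight $\langle\xi\rangle^{p}$ to absorb the loss). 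This gives $\|\Gamma_{w}(u,f)(s)\|_{L^{2}}\lesssim (1+s)^{-3/2}C_{u,T}^{2}C_{f}^{\infty}$; the time convolution
\begin{equation*}
\int_{0}^{t}e^{-c(t-s)}(1+s)^{-3/2}\,ds\;\lesssim\;(1+t)^{-3/2}
\end{equation*}
then produces the desired rate, and adding the two contributions closes the bound.

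The main obstacle will be Step 2: verifying that the proof of the Enhanced Mixture Lemma carries through with the time-dependent weight $w(t,x,\xi)$. The subtlety is that the spatial derivatives $\partial_x^{\alpha}$ now land not only on the free-flight transport factors (producing $\xi$-derivatives via the $x-\xi t$ characteristics, which are then integrated by parts onto $K_w$ as in the original proof) but also directly on the weight $w$ itself through the factor $\widetilde\nu=-(\partial_t w+\xi\cdot\nabla_x w)w^{-1}+\nu$; these extra terms must be shown to retain comparable bounds, which follows because $w$ depends on $x$ only through $\langle x\rangle-Mt$ and the associated derivatives gain factors of $\delta$ that can be absorbed. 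Once this weighted Enhanced Mixture Lemma is in hand, the remaining convolution estimates are routine.
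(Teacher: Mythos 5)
Your overall plan is sound and, as far as one can tell from the paper (which states this lemma without proof and defers to \cite{[LinWangLyuWu]}), it captures the natural route: write $u^{(\hat N)}$ via Duhamel as a weighted mixture operator acting on $\varepsilon f_{0w}$ plus a source convolution, gain two $x$-derivatives from a mixture-type lemma, and close with the bilinear estimate and a time convolution. The choice $|\alpha|\le 2$, $P=0$ together with $\hat N\ge 7$ is consistent with the threshold in the Enhanced Mixture Lemma, and the final convolution $\int_0^t e^{-c(t-s)}(1+s)^{-3/2}\,ds\lesssim(1+t)^{-3/2}$ gives the claimed rate.

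That said, the step you flag as ``the main obstacle'' is genuinely the crux, and your sketch of it needs more care in two places. First, the damping $\widetilde\nu$ is \emph{not} a function of $\xi$ alone: $\widetilde\nu=\nu(\xi)-(\partial_t w+\xi\cdot\nabla_x w)w^{-1}$ depends on $(t,x,\xi)$, so the exponent produced by the free flow is $\exp\bigl(-\int \widetilde\nu(s,x(s),\xi_j)\,ds\bigr)$ along characteristics, not $e^{-\widetilde\nu(\xi_j)(t_j-t_{j+1})}$. Using $\mathbb{S}_w(t,s)=w_t\,\mathbb{S}^{t-s}\,w_s^{-1}$ and $K_w(s)=w_sKw_s^{-1}$ one finds the telescoping identity $\mathbf{M}_{w,\hat N}^{t;s}=w_t\,\mathbf{M}_{\hat N}^{t-s}\,w_s^{-1}$, which is cleaner than reproving the whole lemma for modified kernels and makes explicit what is actually needed: a bound of the form $\lVert w_t\,\partial_x^\alpha\mathbf{M}_{\hat N}^{t-s}\phi\rVert_{L^2}\lesssim e^{-c(t-s)}\lVert w_s\phi\rVert_{L^2}$. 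Second, and more substantively, the \emph{unweighted} Enhanced Mixture Lemma, part (1), only generates a velocity weight $(1+|\xi|)^P$, whereas $w_t$ is unbounded in $x$ as well as in $\xi$ (outside the cone $w\sim[\delta(\langle x\rangle-Mt)]^{p/(1-\gamma)}$, independent of $\xi$). So you cannot simply take $P$ large and absorb $w_t$; the $x$-growth of $w_t$ must be controlled by the spatial localization that the mixture produces (the mechanism behind part (2) of the lemma) together with the monotonicity $\partial_t w\le 0$ and the bound $(\partial_t w+\xi\cdot\nabla_x w)w^{-1}\le\tfrac14\nu$, which give control of the ratio $w_t/w_s$ along characteristics. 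This is not an automatic corollary of part (1) and is the place where a genuinely weighted argument is required; your remark that the derivatives of $w$ bring small factors of $\delta$ is correct but only addresses one of the difficulties. Finally, the estimate $\lVert\Gamma_w(u,f)(s)\rVert_{L^2}\lesssim(1+s)^{-3/2}C_{u,T}^2 C_f^\infty$ rests on a weighted bilinear inequality (compatibility of $w$ with $\Gamma$), which holds for this weight because $w(t,x,\xi)/w(t,x,\xi')\lesssim(\langle\xi\rangle/\langle\xi'\rangle)^p$ and the Gaussian in the kernel absorbs that loss — but this should be stated, since it is the reason for carrying the extra $\langle\xi\rangle^p$ in $C_f^\infty$. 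In summary: the structure of the proof is right and matches what the paper defers to the literature; the gap is the precise formulation and proof of the weighted regularity gain, where the $x$-growth of $w$ and the position-dependence of $\widetilde\nu$ are the two issues that cannot be dismissed as cosmetic.
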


\begin{lemma}[{\protect\cite{[LinWangWu3]}}]
\label{Lemma for Kw-K}Let $p\geq 2$ and $D\geq 1$. Define
\begin{align*}
H_{+} &=\{\left( x,\xi \right) :\delta \left( \left\langle x\right\rangle
-Mt\right) >2\left\langle \xi \right\rangle _{D}^{1-\gamma }\}\text{,}%
\vspace {3mm}
\\
H_{0} &=\{\left( x,\xi \right) :\left\langle \xi \right\rangle
_{D}^{1-\gamma }\leq \delta \left( \left\langle x\right\rangle -Mt\right)
\leq 2\left\langle \xi \right\rangle _{D}^{1-\gamma }\}\text{,}%
\vspace {3mm}
\\
H_{-} &=\{\left( x,\xi \right) :\delta \left( \left\langle x\right\rangle
-Mt\right) <2\left\langle \xi \right\rangle _{D}^{1-\gamma }\}\text{.}
\end{align*}%
There exists a constant $C=C\left( \delta ,M,\gamma \right) >0$ such that
\begin{align*}
&\int_{\mathbb{R}^{3}}\left\langle g,\left( K_{w}-K\right) g\right\rangle
_{\xi }dx \\
&\leq \frac{C}{D^{2}}\left[ \int_{\mathbb{R}^{3}}\left\langle \xi
\right\rangle ^{\gamma }\left\vert P_{1}g\right\vert ^{2}d\xi dx+\int_{H_{+}}%
\left[ \delta \left( \left\langle x\right\rangle -Mt\right) \right]
^{-1}\left\vert P_{0}g\right\vert ^{2}d\xi dx+\int_{H_{0}\cup
H_{-}}\left\vert P_{0}g\right\vert ^{2}d\xi dx\right] \text{.}
\end{align*}
\end{lemma}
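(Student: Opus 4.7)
The plan is to follow the strategy of \cite{[LinWangWu3]}, exploiting the smoothness of the weight $w$ in the $\xi$-variable and the symmetry of the kernel $k(\xi,\xi_*)$ to extract the $1/D^2$ gain. The first step is to write out the commutator explicitly. Since $K_w h(\xi)=w(\xi)\int k(\xi,\xi_*)w^{-1}(\xi_*)h(\xi_*)\,d\xi_*$, we have
\[
\int_{\mathbb{R}^3}\!\langle g,(K_w-K)g\rangle_\xi\,dx=\iiint k(\xi,\xi_*)\Bigl[\tfrac{w(\xi)}{w(\xi_*)}-1\Bigr]g(\xi_*)g(\xi)\,d\xi_*\,d\xi\,dx.
\]
Using the symmetry $k(\xi,\xi_*)=k(\xi_*,\xi)$, I would swap $\xi\leftrightarrow \xi_*$, average, and obtain the manifestly symmetric form
\[
\tfrac12\iiint k(\xi,\xi_*)\frac{(w(\xi)-w(\xi_*))^2}{w(\xi)w(\xi_*)}\,g(\xi_*)g(\xi)\,d\xi_*\,d\xi\,dx,
\]
which by Cauchy--Schwarz is controlled by $\iiint k(\xi,\xi_*)\frac{(w(\xi)-w(\xi_*))^2}{w(\xi)w(\xi_*)}g(\xi)^2\,d\xi_*d\xi\,dx$.

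The next step is to bound the pointwise quotient $(w(\xi)-w(\xi_*))^2/(w(\xi)w(\xi_*))$. Consider each region. On $H_+$, the cutoff argument exceeds $2$ so $\chi\equiv 0$ and $w=c_1[\delta(\langle x\rangle-Mt)]^{p/(1-\gamma)}$ is independent of $\xi$; the only contribution arises when $\xi_*$ falls outside $H_+$, and the decay factor $[\delta(\langle x\rangle-Mt)]^{-1}$ in the statement reflects precisely the size of $w$ on this region versus the region $H_-$. On $H_-$ we have $w=c_2\langle\xi\rangle_D^p$; since $|\nabla_\xi\langle\xi\rangle_D^p|\leq p\langle\xi\rangle_D^{p-1}$, the relative gradient satisfies $|\nabla_\xi w|/w\lesssim 1/D$. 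A mean-value argument then gives $|w(\xi)-w(\xi_*)|^2/(w(\xi)w(\xi_*))\lesssim D^{-2}|\xi-\xi_*|^2$. On $H_0$, both branches of $w$ are activated by $\chi$, and the constants $c_1,c_2$ are chosen so that $w$ is $C^1$ across the transition, giving the same $D^{-2}$ control up to absolute constants depending only on $\delta,M,\gamma$.

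The final step is the macro-micro decomposition $g=P_0g+P_1g$, which allows the right-hand side terms to be separated. For the $P_1$-contribution, the weighted factor $|\xi-\xi_*|^2$ combined with the kernel bound from Proposition \ref{prop: Derivatives of kernel k} and Lemma \ref{lem-Caflisch} produces $\int\langle\xi\rangle^\gamma|P_1g|^2\,d\xi\,dx$, because $k(\xi,\xi_*)\,|\xi-\xi_*|^2$ is effectively a $\langle\xi\rangle^\gamma$-multiplier after $\xi_*$-integration. For the $P_0$-contribution, one uses that $P_0g$ carries a Maxwellian factor $\mathcal{M}^{1/2}$, so the $\xi$-growth is absorbed and only the region-dependent $x$-weight matters; this yields the two terms on $H_+$ and $H_0\cup H_-$ separately. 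The main obstacle is handling the transition region $H_0$: one must verify that the derivatives of $\chi$, scaled by the characteristic length $\langle\xi\rangle_D^{1-\gamma}/\delta$ in the argument, combine with the two weight branches to produce no worse than the stated bound, which is where the $c_1,c_2$ balancing enters crucially.
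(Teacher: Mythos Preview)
The paper does not prove this lemma; it is quoted verbatim from \cite{[LinWangWu3]} and used as a black box. Your plan to follow \cite{[LinWangWu3]} is therefore literally the paper's approach, and the key mechanism you identify---symmetrizing the commutator via $k(\xi,\xi_*)=k(\xi_*,\xi)$, bounding $(w(\xi)-w(\xi_*))^2/(w(\xi)w(\xi_*))$ through the relative gradient estimate $|\nabla_\xi w|/w\lesssim 1/D$ (valid uniformly across $H_\pm$ and $H_0$ once $c_1,c_2$ are matched), and then splitting $g=\mathrm P_0g+\mathrm P_1g$---is the correct one.

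One small correction: in your Cauchy--Schwarz step you implicitly treat $k$ as non-negative, but $k=-k_1+k_2$ is not sign-definite; you should pass to $|k|$ immediately after symmetrization, which is harmless since all the kernel bounds in Proposition~\ref{prop: Derivatives of kernel k} are for $|k|$. Also, your explanation of the $[\delta(\langle x\rangle-Mt)]^{-1}$ factor on $H_+$ is a bit loose: it is not a ``size of $w$'' ratio but rather reflects that on $H_+$ the commutator vanishes except in an $O(1)$ boundary layer near $H_0$, and the form of the bound is tailored to how it is consumed in the subsequent energy estimate rather than being sharp in itself.
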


We are ready to do an energy estimate for the functional
\begin{equation*}
E\left( t\right) =\frac{1}{2}\left( \left\Vert g\right\Vert _{L^{2}}+\alpha
\left\Vert \nabla _{x}g\right\Vert _{L^{2}}+\alpha ^{2}\left\Vert \nabla
_{x}^{2}g\right\Vert _{L^{2}}\right) \text{, }\alpha =O\left( \delta
^{2}\right) \text{,}
\end{equation*}%
with the equation

\begin{equation*}
\left\{
\begin{array}{l}
\partial _{t}g+\xi \cdot \nabla _{x}g-\left( \partial _{t}w+\xi \cdot \nabla
_{x}w\right) w^{-1}g=Lg+\left( K_{w}-K\right) g+U%
\vspace {3mm}
\\
g\left( 0,x,\xi \right) =0\text{.}%
\end{array}%
\right.
\end{equation*}%
Direct computations show
\begin{align*}
&\partial _{t}\left( \partial _{x_{j}}g\right) +\xi \cdot \nabla _{x}\left(
\partial _{x_{j}}g\right) -\left( \partial _{t}w+\xi \cdot \nabla
_{x}w\right) w^{-1}\partial _{x_{j}}g \\
&=L\left( \partial _{x_{j}}g\right) +\left( K_{w}-K\right) \partial
_{x_{j}}g+\partial _{x_{j}}U+\mathbb{A}_{1}\left( t,x,\xi \right) \text{,}
\end{align*}%
\begin{align*}
&\partial _{t}\left( \partial _{x_{i}x_{j}}^{2}g\right) +\xi \cdot \nabla
_{x}\left( \partial _{x_{i}x_{j}}^{2}g\right) -\left( \partial _{t}w+\xi
\cdot \nabla _{x}w\right) w^{-1}\partial _{x_{i}x_{j}}^{2}g \\
&=L\left( \partial _{x_{i}x_{j}}^{2}g\right) +\left( K_{w}-K\right)
\partial _{x_{i}x_{j}}^{2}g+\partial _{x_{i}x_{j}}^{2}U+\mathbb{A}_{2}\left(
t,x,\xi \right) \text{,}
\end{align*}%
$1\leq i$, $j\leq 3$, where
\begin{equation*}
\mathbb{A}_{1}\left( t,x,\xi \right) =\frac{\partial _{x_{j}}w}{w}%
K_{w}g+K_{w}\left( -\frac{\partial _{x_{j}}w}{w}g\right) +\partial _{x_{j}}%
\left[ \left( \partial _{t}w+\xi \cdot \nabla _{x}w\right) w^{-1}\right] g%
\text{,}
\end{equation*}%
\begin{align*}
\mathbb{A}_{2}\left( t,x,\xi \right) & =\partial _{x_{i}}\left[ \left(
\partial _{t}w+\xi \cdot \nabla _{x}w\right) w^{-1}\right] \cdot \partial
_{x_{j}}g+\partial _{x_{i}x_{j}}^{2}\left[ \left( \partial _{t}w+\xi \cdot
\nabla _{x}w\right) w^{-1}\right] \cdot g \\
&+\partial _{x_{j}}\left[ \left( \partial _{t}w+\xi \cdot \nabla
_{x}w\right) w^{-1}\right] \cdot \partial _{x_{i}}g+\frac{\partial _{x_{i}}w%
}{w}K_{w}\partial _{x_{j}}g+K_{w}\left( -\frac{\partial _{x_{i}}w}{w}%
\partial _{x_{j}}g\right) \\
&+\partial _{x_{i}}\left( \frac{\partial _{x_{j}}w}{w}\right) K_{w}g+\frac{%
\partial _{x_{j}}w}{w}\frac{\partial _{x_{i}}w}{w}K_{w}g+\frac{\partial
_{x_{j}}w}{w}K_{w}\left( -\frac{\partial _{x_{i}}w}{w}g\right) +\frac{%
\partial _{x_{j}}w}{w}K_{w}\left( \partial _{x_{i}}g\right) \\
&+\frac{\partial _{x_{i}}w}{w}K_{w}\left( -\frac{\partial _{x_{j}}w}{w}%
g\right) +K_{w}\left( \frac{\partial _{x_{i}}w}{w}\frac{\partial _{x_{j}}w}{w%
}g\right) +K_{w}\left( -\partial _{x_{i}}\left( \frac{\partial _{x_{j}}w}{w}%
\right) g\right) +K_{w}\left( -\frac{\partial _{x_{j}}w}{w}\partial
_{x_{i}}g\right) \text{.}
\end{align*}%
We compute $\frac{d}{dt}\frac{1}{2}\left\Vert g\right\Vert _{L^{2}}^{2}$
first,
\begin{align*}
&\frac{d}{dt}\frac{1}{2}\left\Vert g\right\Vert _{L^{2}}^{2}-\int_{\mathbb{R%
}^{3}}\left\langle Lg,g\right\rangle _{\xi }dx-\int_{\mathbb{R}%
^{3}}\left\langle g,\left( K_{w}-K\right) g\right\rangle _{\xi }dx \\
&=\int_{\mathbb{R}^{3}}\left\langle \left( \partial _{t}w+\xi \cdot \nabla
_{x}w\right) w^{-1}g,g\right\rangle d\xi dx+\int_{\mathbb{R}^{3}\times
\mathbb{R}^{3}}Ugd\xi dx\text{.}
\end{align*}%
It readily follows from \cite[Lemma 4]{[LinWangWu3]} that
\begin{equation*}
\left\vert \int_{\mathbb{R}^{3}\times \mathbb{R}^{3}}Ugd\xi dx\right\vert
\leq C\left\Vert g\right\Vert _{L^{2}}\left\Vert u^{\left( \hat{N}%
\right) }\right\Vert _{L^{2}}\text{.}
\end{equation*}%
Since%
\begin{align*}
\left( \partial _{t}w+\xi \cdot \nabla _{x}w\right) w^{-1} &=\frac{-\delta p%
}{1-\gamma }\left( M-\frac{\xi \cdot x}{\left\langle x\right\rangle }\right) %
\left[ \delta \left( \left\langle x\right\rangle -Mt\right) \right]
^{-1}\left( 1-\chi \left( \frac{\delta \left( \left\langle x\right\rangle
-Mt\right) }{\left\langle \xi \right\rangle _{D}^{1-\gamma }}\right) \right)
\\
&+w^{-1}\left( M-\frac{\xi \cdot x}{\left\langle x\right\rangle }\right)
\frac{\delta }{\left\langle \xi \right\rangle _{D}^{1-\gamma }}c_{1}\left[
\delta \left( \left\langle x\right\rangle -Mt\right) \right] ^{\frac{p}{%
1-\gamma }}\chi ^{\prime }\left( \frac{\delta \left( \left\langle
x\right\rangle -Mt\right) }{\left\langle \xi \right\rangle _{D}^{1-\gamma }}%
\right) \\
&-w^{-1}\left( M-\frac{\xi \cdot x}{\left\langle x\right\rangle }\right)
\frac{\delta }{\left\langle \xi \right\rangle _{D}^{1-\gamma }}%
c_{2}\left\langle \xi \right\rangle _{D}^{p}\chi ^{\prime }\left( \frac{%
\delta \left( \left\langle x\right\rangle -Mt\right) }{\left\langle \xi
\right\rangle _{D}^{1-\gamma }}\right) \text{,}
\end{align*}%
we have%
\begin{align*}
&\int_{\mathbb{R}^{3}}\left\langle \left( \partial _{t}w+\xi \cdot \nabla
_{x}w\right) w^{-1}g,g\right\rangle _{\xi }dx \\
&\leq \frac{-\delta p}{1-\gamma }\int_{H_{+}}\left( M-\frac{\xi \cdot x}{%
\left\langle x\right\rangle }\right) \left[ \delta \left( \left\langle
x\right\rangle -Mt\right) \right] ^{-1}\left\vert g\right\vert ^{2}dxd\xi
+C_{1}\delta \left( \frac{M}{D}+1\right) \frac{p}{1-\gamma }%
\int_{H_{0}}\left\langle \xi \right\rangle _{D}^{\gamma }\left\vert
g\right\vert ^{2}dxd\xi \\
&\leq \frac{-\delta p}{1-\gamma }\int_{H_{+}}\left( M-\frac{\xi \cdot x}{%
\left\langle x\right\rangle }\right) \left[ \delta \left( \left\langle
x\right\rangle -Mt\right) \right] ^{-1}\left\vert \mathrm{P}_{0}g\right\vert
^{2}dxd\xi \\
&\quad +\frac{\delta p}{1-\gamma }\int_{H_{+}}\left\vert M-\frac{\xi \cdot x}{%
\left\langle x\right\rangle }\right\vert \left[ \delta \left( \left\langle
x\right\rangle -Mt\right) \right] ^{-1}\left( \hat{\epsilon }\left\vert
\mathrm{P}_{0}g\right\vert ^{2}+\frac{1}{4\hat{\epsilon }}\left\vert
\mathrm{P}_{1}g\right\vert ^{2}+\left\vert \mathrm{P}_{1}g\right\vert
^{2}\right) dxd\xi \\
&\quad +2C_{1}\delta \left( \frac{M}{D}+1\right) \frac{p}{1-\gamma }%
\int_{H_{0}}\left\langle \xi \right\rangle _{D}^{\gamma }\left( \left\vert
\mathrm{P}_{0}g\right\vert ^{2}+\left\vert \mathrm{P}_{1}g\right\vert
^{2}\right) dxd\xi \\
&\leq \frac{-\delta p}{1-\gamma }\int_{H_{+}}\left( M-\frac{\xi \cdot x}{%
\left\langle x\right\rangle }\right) \left[ \delta \left( \left\langle
x\right\rangle -Mt\right) \right] ^{-1}\left\vert \mathrm{P}_{0}g\right\vert
^{2}dxd\xi \\
&\quad +\frac{\delta pM\hat{\epsilon }}{1-\gamma }\int_{H_{+}}\left(
1+\left\vert \xi \right\vert \right) \left[ \delta \left( \left\langle
x\right\rangle -Mt\right) \right] ^{-1}\left\vert \mathrm{P}_{0}g\right\vert
^{2}dxd\xi \\
&\quad +2C_{1}\delta \left( \frac{M}{D}+1\right) D^{\gamma }\frac{p}{1-\gamma }%
\int_{H_{0}}\left\langle \xi \right\rangle ^{\gamma }\left\vert \mathrm{P}%
_{0}g\right\vert ^{2}dxd\xi \\
&\quad +\frac{2\delta p}{1-\gamma }\left( \frac{M}{D}+1\right) D^{\gamma }\left(
C_{1}+1+\frac{1}{4\hat{\epsilon }}\right) \int_{\mathbb{R}^{3}\times
\mathbb{R}^{3}}\left\vert \left\langle \xi \right\rangle ^{\frac{\gamma }{2}}%
\mathrm{P}_{1}g\right\vert ^{2}dxd\xi \text{.}
\end{align*}%
It is easy to see
\begin{align*}
&\int_{H_{+}}\left( 1+\left\vert \xi \right\vert \right) \left[ \delta
\left( \left\langle x\right\rangle -Mt\right) \right] ^{-1}\left\vert
\mathrm{P}_{0}g\right\vert ^{2}dxd\xi \\
&\leq \int_{\delta \left( \left\langle x\right\rangle -Mt\right) \geq
2D^{1-\gamma }}\left[ \delta \left( \left\langle x\right\rangle -Mt\right) %
\right] ^{-1}\int_{\delta \left( \left\langle x\right\rangle -Mt\right) \geq
2\left\langle \xi \right\rangle _{D}^{1-\gamma }}\left( 1+\left\vert \xi
\right\vert \right) \left( \sum_{j=0}^{4}\left\langle g,\chi
_{j}\right\rangle ^{2}\right) \left( \sum_{j=0}^{4}\chi _{j}^{2}\right)
dxd\xi \\
&\leq \int_{\delta \left( \left\langle x\right\rangle -Mt\right) \geq
2D^{1-\gamma }}\left[ \delta \left( \left\langle x\right\rangle -Mt\right) %
\right] ^{-1}\left( \sum_{j=0}^{4}\left\langle g,\chi _{j}\right\rangle
^{2}\right) dx \\
&\leq \int_{H_{+}}\left[ \delta \left( \left\langle x\right\rangle
-Mt\right) \right] ^{-1}\left\vert \mathrm{P}_{0}g\right\vert ^{2}dxd\xi +%
\frac{1}{2D^{1-\gamma }}\int_{H_{0}\cup H_{-}}\left\vert \mathrm{P}%
_{0}g\right\vert ^{2}dxd\xi \text{.}
\end{align*}%
In view of (\ref{Eigen function}), we have
\begin{equation*}
\mathrm{P}_{0}\left( \frac{\xi \cdot x}{\left\vert x\right\vert }\right)
\mathrm{P}_{0}g=\sum_{j=0}^{4}a_{j}\left\langle g,E_{j}\right\rangle E_{j}%
\text{,}
\end{equation*}%
and thus for $M>\mathbf{c}$ (\textit{this is the key point to the improvement}),
\begin{align*}
&\frac{-\delta p}{1-\gamma }\int_{H_{+}}\left( M-\frac{\xi \cdot x}{%
\left\langle x\right\rangle }\right) \left[ \delta \left( \left\langle
x\right\rangle -Mt\right) \right] ^{-1}\left\vert \mathrm{P}_{0}g\right\vert
^{2}dxd\xi \\
&=\frac{-\delta p}{1-\gamma }\int_{\delta \left( \left\langle
x\right\rangle -Mt\right) \geq 2D^{1-\gamma }}\int_{\delta \left(
\left\langle x\right\rangle -Mt\right) \geq 2\left\langle \xi \right\rangle
_{D}^{1-\gamma }}\left( M-\frac{\xi \cdot x}{\left\langle x\right\rangle }%
\right) \left[ \delta \left( \left\langle x\right\rangle -Mt\right) \right]
^{-1}\left\vert \mathrm{P}_{0}g\right\vert ^{2}d\xi dx \\
&=\frac{-\delta p}{1-\gamma }\int_{\delta \left( \left\langle
x\right\rangle -Mt\right) \geq 2D^{1-\gamma }}\int_{\mathbb{R}^{3}}\left( M-%
\frac{\xi \cdot x}{\left\langle x\right\rangle }\right) \left[ \delta \left(
\left\langle x\right\rangle -Mt\right) \right] ^{-1}\left\vert \mathrm{P}%
_{0}g\right\vert ^{2}d\xi dx \\
&\quad +\frac{\delta p}{1-\gamma }\int_{\delta \left( \left\langle x\right\rangle
-Mt\right) \geq 2D^{1-\gamma }}\int_{\delta \left( \left\langle
x\right\rangle -Mt\right) \leq 2\left\langle \xi \right\rangle
_{D}^{1-\gamma }}\left( M-\frac{\xi \cdot x}{\left\langle x\right\rangle }%
\right) \left[ \delta \left( \left\langle x\right\rangle -Mt\right) \right]
^{-1}\left\vert \mathrm{P}_{0}g\right\vert ^{2}d\xi dx \\
&\leq \frac{-\delta p}{1-\gamma }\int_{\delta \left( \left\langle
x\right\rangle -Mt\right) \geq 2D^{1-\gamma }}\int_{\mathbb{R}^{3}}\left( M-%
\frac{\xi \cdot x}{\left\langle x\right\rangle }\right) \left[ \delta \left(
\left\langle x\right\rangle -Mt\right) \right] ^{-1}\left\vert \mathrm{P}%
_{0}g\right\vert ^{2}d\xi dx \\
&\quad +\frac{\delta p}{1-\gamma }\int_{\delta \left( \left\langle x\right\rangle
-Mt\right) \geq 2D^{1-\gamma }}\int_{\delta \left( \left\langle
x\right\rangle -Mt\right) \leq 2\left\langle \xi \right\rangle
_{D}^{1-\gamma }}\frac{\left( M+\left\vert \xi \right\vert \right) }{%
2D^{1-\gamma }}\left\vert \mathrm{P}_{0}g\right\vert ^{2}d\xi dx \\
&\leq \frac{-\delta p}{1-\gamma }\int_{\delta \left( \left\langle
x\right\rangle -Mt\right) \geq 2D^{1-\gamma }}\left[ \delta \left(
\left\langle x\right\rangle -Mt\right) \right] ^{-1}\int_{\mathbb{R}%
^{3}}\left( M-\frac{\xi \cdot x}{\left\langle x\right\rangle }\right)
\left\vert \mathrm{P}_{0}g\right\vert ^{2}d\xi dx \\
&\quad +\frac{\delta pM}{2D^{1-\gamma }\left( 1-\gamma \right) }\int_{H_{0}\cup
H_{-}}\left( 1+\left\vert \xi \right\vert \right) \left\vert
P_{0}g\right\vert ^{2}d\xi dx \\
&=\frac{-\delta p}{1-\gamma }\int_{\delta \left( \left\langle
x\right\rangle -Mt\right) \geq 2D^{1-\gamma }}\left[ \delta \left(
\left\langle x\right\rangle -Mt\right) \right] ^{-1}\left( M\left\vert
\mathrm{P}_{0}g\right\vert _{L_{\xi }^{2}}^{2}-\frac{\left\vert x\right\vert
}{\left\langle x\right\rangle }\sum_{j=0}^{4}a_{j}\left\langle
g,E_{j}\right\rangle ^{2}\right) dx \\
&\quad +\frac{\delta pM}{2D^{1-\gamma }\left( 1-\gamma \right) }\int_{H_{0}\cup
H_{-}}\left( 1+\left\vert \xi \right\vert \right) \left\vert \mathrm{P}%
_{0}g\right\vert ^{2}d\xi dx \\
&\leq \frac{-\delta p}{1-\gamma }\int_{\delta \left( \left\langle
x\right\rangle -Mt\right) \geq 2D^{1-\gamma }}\left[ \delta \left(
\left\langle x\right\rangle -Mt\right) \right] ^{-1}\left( M-\mathbf{c}%
\right) \left\vert \mathrm{P}_{0}g\right\vert _{L_{\xi }^{2}}^{2}dx \\
&\quad +\frac{\delta pM}{2D^{1-\gamma }\left( 1-\gamma \right) }\int_{H_{0}\cup
H_{-}}\left( 1+\left\vert \xi \right\vert \right) \left\vert \mathrm{P}%
_{0}g\right\vert ^{2}d\xi dx \\
&\leq \frac{-\delta p}{1-\gamma }\left( M-\mathbf{c}\right) \int_{H_{+}}%
\left[ \delta \left( \left\langle x\right\rangle -Mt\right) \right]
^{-1}\left\vert \mathrm{P}_{0}g\right\vert ^{2}d\xi dx+\frac{\delta pM}{%
2D^{1-\gamma }\left( 1-\gamma \right) }\int_{H_{0}\cup H_{-}}\left(
1+\left\vert \xi \right\vert \right) \left\vert \mathrm{P}_{0}g\right\vert
^{2}d\xi dx\text{.}
\end{align*}%
Together with Lemma \ref{Lemma
for Kw-K}, we find for $M\geq \mathbf{c}+\delta $,
\begin{align*}
&\frac{d}{dt}\frac{1}{2}\left\Vert g\right\Vert _{L^{2}}^{2} \\
&\leq \int_{\mathbb{R}^{3}}\left\langle \left( \partial _{t}w+\xi \cdot
\nabla _{x}w\right) w^{-1}g,g\right\rangle _{\xi }dx+\int_{\mathbb{R}%
^{3}}\left\langle Lg,g\right\rangle _{\xi }dx+\int_{\mathbb{R}%
^{3}}\left\langle g,\left( K_{w}-K\right) g\right\rangle _{\xi }dx+\int_{%
\mathbb{R}^{3}\times \mathbb{R}^{3}}Ugdxd\xi \\
&\leq \frac{-\delta p}{1-\gamma }\left[ M-\mathbf{c-}M\hat{\epsilon }-%
\frac{C\left( 1-\gamma \right) }{D^{2}\delta p}\right] \int_{H_{+}}\left[
\delta \left( \left\langle x\right\rangle -Mt\right) \right] ^{-1}\left\vert
\mathrm{P}_{0}g\right\vert ^{2}d\xi dx \\
&\quad -\left[ \mu -\frac{2\delta p}{1-\gamma }\left( \frac{M}{D}+1\right)
D^{\gamma }\left( C_{1}+1+\frac{1}{4\hat{\epsilon }}\right) -\frac{C}{%
D^{2}}\right] \int_{\mathbb{R}^{3}\times \mathbb{R}^{3}}\left\vert
\left\langle \xi \right\rangle ^{\frac{\gamma }{2}}\mathrm{P}%
_{1}g\right\vert ^{2}dxd\xi \text{.} \\
&\quad +\left[ \frac{\delta p}{1-\gamma }\left( \frac{M}{2D^{1-\gamma }}+\frac{M%
\hat{\epsilon }}{2D^{1-\gamma }}+2C_{1}\left( \frac{M}{D}+1\right)
D^{\gamma }\right) +\frac{C}{D^{2}}\right] \int_{H_{0}\cup H_{-}}\left(
1+\left\vert \xi \right\vert \right) \left\vert \mathrm{P}_{0}g\right\vert
^{2}d\xi dx \\
&\quad +C\left\Vert g\right\Vert _{L^{2}}\left\Vert u^{(\hat{N})
}\right\Vert _{L^{2}}\text{.}
\end{align*}%
It is similar for $\left\Vert \nabla _{x}g\right\Vert _{L^{2}}^{2}$ and $%
\left\Vert \nabla _{x}^{2}g\right\Vert _{L^{2}}^{2}$. The rest of proofs for
$\lVert \mathcal{R}_{w}^{( \hat{N}) }\rVert _{L_{\xi
}^{2}H_{x}^{2}}$ and $\lVert \mathcal{R}_{w}^{( \hat{N})
}\rVert _{L_{\xi ,\beta }^{\infty }L_{x}^{\infty }}$ is all the same as
those in \cite{[LinWangLyuWu]} and we omit the details here. Therefore,
after choosing $\hat{\epsilon }>0$ sufficiently small, $D\geq 1$
sufficiently large, $\delta >0$ sufficiently small, together with the wave
part (Lemma \ref{Wave part}), we can conclude that for any $\delta >0$, $%
p\geq 2$, $\beta >3/2$, there exist positive constants $D\geq 1$
sufficiently large and $M=\mathbf{c}+\delta $ for the choice of weight
function $w$ such that
\begin{equation*}
\sup_{0\leq t}\left( 1+t\right) ^{-1/2}\left\Vert u\right\Vert _{L_{\xi
,\beta }^{\infty }L_{x}^{\infty }}\leq C_{\delta }\varepsilon \left\Vert
\left\langle \xi \right\rangle ^{p+\beta +\gamma /2}f_{0}\right\Vert
_{L_{\xi }^{\infty }L_{x}^{\infty }}\text{,}
\end{equation*}%
or equivalently,
\begin{equation*}
\left\Vert wf\right\Vert _{L_{\xi ,\beta }^{\infty }L_{x}^{\infty
}}=\left\Vert u\right\Vert _{L_{\xi ,\beta }^{\infty }L_{x}^{\infty }}\leq
C_{\delta }\varepsilon \left( 1+t\right) ^{1/2}\left\Vert \left\langle \xi
\right\rangle ^{p+\beta +\gamma /2}f_{0}\right\Vert _{L_{\xi }^{\infty
}L_{x}^{\infty }}\text{, }t\geq 0\text{,}
\end{equation*}%
where $C_{\delta }>0$ is a constant independent of time with $C_{\delta
}\rightarrow \infty $ as $\delta \rightarrow 0$.

Note that for $\left\vert x\right\vert >\left( M+\delta \right) t=\left(
\mathbf{c}+2\delta \right) t$, we have%
\begin{equation*}
\left\langle x\right\rangle -Mt=\frac{\frac{\delta }{2M}\left\langle
x\right\rangle }{2+\frac{\delta }{2M}}+\frac{2\left\langle x\right\rangle }{%
2+\frac{\delta }{2M}}-Mt>\frac{\frac{\delta }{2M}\left\langle x\right\rangle
}{2+\frac{\delta }{2M}}+\frac{\frac{\delta }{2}t}{2+\frac{\delta }{2M}}\geq
\frac{\delta \left( \left\langle x\right\rangle +t\right) }{4M+2\delta }%
\text{,}
\end{equation*}%
so that
\begin{equation*}
\left\vert f\right\vert _{L_{\xi }^{\infty }}\leq C_{\delta }\varepsilon
\left( 1+t\right) ^{1/2}\left( \left\langle x\right\rangle +t\right) ^{-%
\frac{p}{1-\gamma }}\left\Vert \left\langle \xi \right\rangle ^{p+\beta
+\gamma /2}f_{0}\right\Vert _{L_{\xi }^{\infty }L_{x}^{\infty }}\text{, }%
t\geq 0\text{,}
\end{equation*}%
where $C_{\delta }>0$ is a constant independent of time with $C_{\delta
}\rightarrow \infty $ as $\delta \rightarrow 0$, as desired.

\section{Some convolution estimates} \label{sec:convolution}

In this section, we provide the proof of nonlinear wave interactions for $f_{\ast}$ of Lemma \ref{lem:wave-int-*} in Section \ref{sec:nonlinear}.

\begin{lemma}
\begin{align*}
A&=:\int_{0}^{t}\int_{\mathbb{R}^{3}}\left(1+t-\tau\right)^{-m} \left(1+%
\frac{|x-y|^{2}}{1+t-\tau}\right)^{-N}e^{-c\tau}\left(1+|y|\right)^{-\hat{P}%
}dyd\tau \\
&\lesssim\left(1+t\right)^{-m}\left(1+\frac{|x|^{2}}{1+t}\right)^{-N}\,.
\end{align*}
\end{lemma}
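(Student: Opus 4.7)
The approach is a classical time-splitting combined with a spatial convolution estimate. The exponential factor $e^{-c\tau}$ concentrates the $\tau$-integration near $\tau=0$, while the tail contributes a negligible correction thanks to its super-polynomial decay.

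First I would split the $\tau$-integral at $\tau=t/2$ and write $A=A_{1}+A_{2}$. On $A_{1}$ (where $\tau\leq t/2$), the factor $1+t-\tau$ is comparable to $1+t$, so I pull out $(1+t)^{-m}$, integrate $e^{-c\tau}$ trivially over $[0,t/2]$, and reduce the problem to the spatial convolution estimate
\begin{equation*}
\int_{\mathbb{R}^{3}}\left(1+\frac{|x-y|^{2}}{1+t}\right)^{-N}(1+|y|)^{-\hat{P}}\,dy \lesssim \left(1+\frac{|x|^{2}}{1+t}\right)^{-N},
\end{equation*}
which delivers the desired bound directly. On $A_{2}$ (where $\tau\geq t/2$), I use $e^{-c\tau}\leq e^{-ct/2}$, the monotonicity $\left(1+\frac{|x|^{2}}{1+t-\tau}\right)^{-N}\leq \left(1+\frac{|x|^{2}}{1+t}\right)^{-N}$ (since $1+t-\tau\leq 1+t$), and absorb $\int_{t/2}^{t}(1+t-\tau)^{-m}\,d\tau$ against the exponential, which beats any polynomial growth and provides the extra $(1+t)^{-m}$.

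The key ingredient is the spatial convolution estimate, which I would prove by splitting $\mathbb{R}^{3}=\{|y|\leq |x|/2\}\cup\{|y|>|x|/2\}$. On the first region, $|x-y|\geq |x|/2$, so $\left(1+\frac{|x-y|^{2}}{1+t}\right)^{-N}\lesssim\left(1+\frac{|x|^{2}}{1+t}\right)^{-N}$, and what remains is the harmless integral $\int (1+|y|)^{-\hat{P}}\,dy$, finite as soon as $\hat{P}>3$. On the second region, $(1+|y|)^{-\hat{P}}\lesssim (1+|x|)^{-\hat{P}}$, and integration of the first factor yields at most $C(1+t)^{3/2}$ (for $N>3/2$); one then needs $(1+|x|)^{-\hat{P}}(1+t)^{3/2}\lesssim \left(1+\frac{|x|^{2}}{1+t}\right)^{-N}$, which a case split on $|x|^{2}\lessgtr 1+t$ verifies provided $\hat{P}\geq 2N$ and $N\geq 3/2$.

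The proof is essentially routine; the mild subtlety is the bookkeeping of parameter thresholds $(m,N,\hat{P})$ in the second region, but these are all satisfied in the regimes in which this lemma is invoked in Section \ref{sec:nonlinear}, so there is no genuine obstacle.
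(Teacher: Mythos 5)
Your overall structure — time-split at $\tau=t/2$, reduce to a spatial convolution estimate, then absorb the far-time piece with the exponential — is sound and essentially parallels the paper's argument, which splits on $|x|\lessgtr 3\sqrt{1+t}$ first and then on $|y|\lessgtr |x|/2$. However, the spatial convolution estimate as you argue it contains a false step. On the region $|y|>|x|/2$ you write down the inequality $(1+|x|)^{-\hat{P}}(1+t)^{3/2} \lesssim \bigl(1+\tfrac{|x|^2}{1+t}\bigr)^{-N}$ and assert that a case split on $|x|^2\lessgtr 1+t$ verifies it. It does not: take $x=0$, so the left side is $(1+t)^{3/2}$ while the right side equals $1$, which fails for $t$ large. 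The defect is that when $|x|$ is small you use both the crude pointwise bound $(1+|y|)^{-\hat{P}}\lesssim(1+|x|)^{-\hat{P}}$ \emph{and} the Gaussian-integral gain $(1+t)^{3/2}$, thereby throwing away all the decay of $(1+|y|)^{-\hat{P}}$ and gaining a spurious power of $t$.

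The fix is local and recovers the paper's proof: in the regime $|x|^2\lesssim 1+t$ the target right-hand side is comparable to $1$, so simply bound the whole integral by $\int(1+|y|)^{-\hat{P}}\,dy\lesssim 1$ (valid for $\hat P>3$) and do not invoke the $(1+t)^{3/2}$ gain at all; reserve your chain $(1+|x|)^{-\hat{P}}(1+t)^{3/2}$ for $|x|^2\gtrsim 1+t$, where the $|x|^{2N-\hat{P}}\lesssim(1+t)^{N-3/2}$ check you perform is correct under $\hat{P}\geq 2N$, $N\geq 3/2$. This is exactly why the paper's proof splits first on $|x|\lessgtr 3\sqrt{1+t}$ — the refined bound involving the $(1+t-\tau)^{3/2}$ Gaussian integral is only ever invoked when $|x|>3\sqrt{1+t}$, so the problematic small-$|x|$ regime never appears in the same breath.
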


\begin{proof}
We prove it for two cases: \newline
\newline
\textbf{Case 1:} When $|x|\leq 3\sqrt{t+1}$,
\begin{equation*}
A\lesssim \int_{0}^{t}\left( 1+t-\tau \right) ^{-m}e^{-c\tau }d\tau \lesssim
(1+t)^{-m}\lesssim \left( 1+t\right) ^{-m}\left( 1+\frac{|x|^{2}}{1+t}%
\right) ^{-N}\,.
\end{equation*}%
\textbf{Case 2:} When $|x|>3\sqrt{t+1}$,
\begin{align*}
A& =\int_{0}^{t}\int_{|y|<|x|/2}+\int_{0}^{t}\int_{|y|>|x|/2} \\
& \lesssim \left( 1+\frac{|x|^{2}}{1+t}\right) ^{-N}\int_{0}^{t}\left(
1+t-\tau \right) ^{-m}e^{-c\tau }d\tau \\
& \quad +\int_{0}^{t}\left( 1+t-\tau \right) ^{-m+\frac{3}{2}}e^{-c\tau
}d\tau \left( 1+|x|+\sqrt{t+1}\right) ^{-\hat{P}} \\
& \lesssim \left( 1+t\right) ^{-m}\left( 1+\frac{|x|^{2}}{1+t}\right)
^{-N}+\left( 1+t\right) ^{-m+\frac{3}{2}}\left( t+1+|x|^{2}\right) ^{-\frac{%
\hat{P}}{2}} \\
& \lesssim \left( 1+t\right) ^{-m}\left( 1+\frac{|x|^{2}}{1+t}\right)
^{-N}+\left( 1+t\right) ^{-m-\frac{\hat{P}}{2}+\frac{3}{2}}\left( 1+\frac{%
|x|^{2}}{1+t}\right) ^{-\frac{\hat{P}}{2}}\,.
\end{align*}
\end{proof}

\begin{lemma}
For any $m\geq 0$, $N>3/2$, $\hat{P}\geq \max \{N+\frac{5}{2}$, $2N$, $%
5\}$, \
\begin{eqnarray*}
I &=&\int_{0}^{t}\int_{\mathbb{R}^{3}}\left( 1+t-\tau \right) ^{-m}\left( 1+%
\frac{\left( \left \vert x-y\right \vert -c\left( t-\tau \right) \right) ^{2}%
}{1+t-\tau }\right) ^{-N}e^{-c_{0}\tau }\left( 1+\left \vert y\right \vert
\right) ^{-\hat{P}}dyd\tau \\
&\lesssim &\left( 1+t\right) ^{-m}\left( 1+\frac{\left( \left \vert x\right
\vert -t\right) ^{2}}{1+t}\right) ^{-N}\text{.}
\end{eqnarray*}
\end{lemma}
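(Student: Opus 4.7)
The plan is to adapt the splitting strategy of the preceding diffusive lemma, but with extra care because the Huygens-type factor concentrates on the moving sphere $|x-y|=c(t-\tau)$ rather than near the origin. Set $\sigma=|x|-ct$ and $\sigma_{\tau}=|x|-c(t-\tau)=\sigma+c\tau$. As a first step I would split the $\tau$-integral at $t/2$: on $[t/2,t]$ the factor $e^{-c_{0}\tau}\le e^{-c_{0}t/2}$ absorbs any polynomial loss, so that contribution is $O(e^{-ct})$ and negligible. On $[0,t/2]$ the outer weight satisfies $(1+t-\tau)^{-m}\asymp(1+t)^{-m}$, and the problem reduces to controlling the spatial integral
\begin{equation*}
J(\tau)=\int_{\mathbb{R}^{3}}\left(1+\frac{(|x-y|-c(t-\tau))^{2}}{1+t}\right)^{-N}(1+|y|)^{-\hat{P}}\,dy.
\end{equation*}

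Next I would split $y$ according to whether $|y|\le|\sigma_{\tau}|/2$ or $|y|>|\sigma_{\tau}|/2$. In the inner region the reverse triangle inequality forces $||x-y|-c(t-\tau)|\ge|\sigma_{\tau}|/2$, so the Huygens factor is bounded by $C(1+\sigma_{\tau}^{2}/(1+t))^{-N}$, and the remaining integral of $(1+|y|)^{-\hat{P}}$ over a ball is $O(1)$; this matches the target form. In the outer region I would use the pointwise bound $(1+|y|)^{-\hat{P}}\le C(1+|\sigma_{\tau}|)^{-\hat{P}}$ together with the standard spherical shell identity
\begin{equation*}
\int_{\mathbb{R}^{3}}\left(1+\frac{(|x-y|-c(t-\tau))^{2}}{1+t-\tau}\right)^{-N}dy\lesssim (1+t-\tau)^{5/2}\qquad (N>3/2),
\end{equation*}
giving an outer contribution of $(1+t)^{5/2}(1+|\sigma_{\tau}|)^{-\hat{P}}$. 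Altogether $J(\tau)\lesssim(1+\sigma_{\tau}^{2}/(1+t))^{-N}+(1+t)^{5/2}(1+|\sigma_{\tau}|)^{-\hat{P}}$.

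To perform the $\tau$-integration, the first piece is handled by replacing $\sigma_{\tau}$ with $\sigma$: on $\tau\le|\sigma|/(2c)$ one has $|\sigma_{\tau}|\ge|\sigma|/2$, and on the complement (only relevant when $\sigma<0$) the factor $e^{-c_{0}\tau}\le e^{-c_{0}|\sigma|/(2c)}$ is super-polynomially small in $|\sigma|$ and dominates any polynomial loss. The main obstacle is the outer piece: after the $\tau$-integration one must show $(1+t)^{5/2}(1+|\sigma|)^{-\hat{P}}\lesssim(1+\sigma^{2}/(1+t))^{-N}$. For $|\sigma|\le\sqrt{1+t}$ the right side is $O(1)$ and the estimate follows from $\hat{P}\ge 5/2$. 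For $|\sigma|>\sqrt{1+t}$ the right side is $\asymp(1+t)^{N}|\sigma|^{-2N}$ and the inequality becomes $|\sigma|^{\hat{P}-2N}\gtrsim(1+t)^{5/2-N}$, which I would verify by splitting into $|x|\ge 2ct$ (so $|\sigma|\gtrsim 1+t$ and the condition $\hat{P}\ge N+5/2$ suffices) and $|x|<2ct$ (so $\sqrt{1+t}<|\sigma|\le ct\lesssim 1+t$ and one invokes $\hat{P}\ge 5$). The role of $\hat{P}\ge 2N$ is to ensure the exponent on $|\sigma|$ is nonnegative throughout. These three hypotheses are exactly what the statement provides, and their interplay is the technical heart of the argument.
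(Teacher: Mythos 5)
Your decomposition is a genuinely different route from the paper. The paper first rewrites $I$ in symmetric convolution form and then partitions the $(x,t)$ half-space into five regions $\mathcal{D}_{1},\dots,\mathcal{D}_{5}$ (near the origin, near the sound cone, outside, two interior strips), performing ad-hoc $\tau$- and $y$-splittings in each; you instead keep the original form, split $\tau$ at $t/2$, and split $y$ at $|y|=|\sigma_{\tau}|/2$ once and for all, then track the competition between the inner Huygens decay and the outer shell bound. The shell identity, the triangle-inequality reduction in the inner region, and the use of $e^{-c_{0}\tau}$ to freeze $\sigma_{\tau}$ near $\sigma$ are all correct and the role you assign to the three hypotheses on $\hat{P}$ in the regime $|\sigma|>\sqrt{1+t}$ checks out.

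However, there is a genuine gap in the regime $|\sigma|\leq\sqrt{1+t}$. You assert that $(1+t)^{5/2}(1+|\sigma|)^{-\hat{P}}\lesssim(1+\sigma^{2}/(1+t))^{-N}$ "follows from $\hat{P}\geq 5/2$", but this inequality is simply false there: take $\sigma=0$ and the left side is $(1+t)^{5/2}$ while the right side is $1$, no matter how large $\hat{P}$ is. The source of the problem is that when $|\sigma_{\tau}|$ is small your outer-region bound $(1+|y|)^{-\hat{P}}\leq C(1+|\sigma_{\tau}|)^{-\hat{P}}$ is vacuous, and pairing it with the shell identity $\lesssim(1+t-\tau)^{5/2}$ produces something unbounded rather than the $O(1)$ you need. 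The correct move in that regime is not to use the shell identity at all: when $|\sigma|\leq\sqrt{1+t}$ the target $(1+\sigma^{2}/(1+t))^{-N}\gtrsim 1$, and the trivial bound $J(\tau)\leq\int_{\mathbb{R}^{3}}(1+|y|)^{-\hat{P}}\,dy\lesssim 1$ (valid since $\hat{P}\geq 5>3$) already closes the estimate; the shell-identity bound $(1+t)^{5/2}(1+|\sigma|)^{-\hat{P}}$ should only be invoked for $|\sigma|>\sqrt{1+t}$, where your Case A/Case B analysis is sound. A related but more minor imprecision: the claim that the $[t/2,t]$ contribution is "$O(e^{-ct})$ and negligible" is not by itself sufficient, since the target carries the factor $(1+\sigma^{2}/(1+t))^{-N}$ which can be super-exponentially small when $|x|$ is much larger than $t$; on $[t/2,t]$ one still needs to note that $J(\tau)$ itself already carries the needed $\sigma$-decay (via the inner-region estimate with $|\sigma_{\tau}|\gtrsim|\sigma|$ when $|x|\geq 2ct$) before the exponential in $\tau$ can absorb the remaining polynomial loss in $t$.
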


\begin{proof}
We may assume $c=1$ and write%
\begin{equation*}
I=\int_{0}^{t}\int_{\mathbb{R}^{3}}e^{-c_{0}\left( t-\tau \right) }\left(
1+\left\vert x-y\right\vert \right) ^{-\hat{P}}\left( 1+\tau \right)
^{-m}\left( 1+\frac{\left( \left\vert y\right\vert -\tau \right) ^{2}}{%
1+\tau }\right) ^{-N}dyd\tau \text{.}
\end{equation*}%
We divide the $xt$ space into five parts
\begin{eqnarray*}
\mathcal{D}_{1} &=&\{\left\vert x\right\vert \leq \sqrt{1+t}\}\text{, \ \ \
\ \ \ \ \ \ \ }\mathcal{D}_{2}=\{\left\vert x-ct\right\vert \leq \sqrt{1+t}\}%
\text{,}%
\vspace {3mm}
\\
\mathcal{D}_{3} &=&\{\left\vert x\right\vert \geq t+\sqrt{1+t}\}\text{, \ \
\ \ \ }\mathcal{D}_{4}=\{\sqrt{1+t}\leq \left\vert x\right\vert \leq \frac{t%
}{2}\}\text{,}%
\vspace {3mm}
\\
\mathcal{D}_{5} &=&\{\frac{t}{2}\leq \left\vert x\right\vert \leq t-\sqrt{1+t%
}\}\text{.}
\end{eqnarray*}

For $\left( x,t\right) \in \mathcal{D}_{1}$ and $t\leq 65$,
\begin{eqnarray*}
I &\lesssim &e^{-\frac{c_{0}t}{2}}\int_{0}^{\frac{t}{2}}\int_{\mathbb{R}%
^{3}}\left( 1+\tau \right) ^{-m}\left( 1+\frac{\left( \left\vert
y\right\vert -\tau \right) ^{2}}{1+\tau }\right) ^{-N}dyd\tau \\
&&+\left( 1+t\right) ^{-m}\int_{\frac{t}{2}}^{t}\int_{\mathbb{R}%
^{3}}e^{-c_{0}\left( t-\tau \right) }\left( 1+\left\vert x-y\right\vert
\right) ^{-\hat{P}}dyd\tau \\
&\lesssim &e^{-\frac{c_{0}t}{4}}+\left( 1+t\right) ^{-m}\lesssim \left(
1+t\right) ^{-m-N}\text{.}
\end{eqnarray*}%
For $\left( x,t\right) \in \mathcal{D}_{1}$ and $t\geq 65$, we have%
\begin{equation*}
\left\vert x-y\right\vert \geq \left\vert y\right\vert -\left\vert
x\right\vert \geq \frac{\tau }{2}-\sqrt{1+t}\geq \frac{t}{4}-\sqrt{1+t}\geq
\frac{t}{8}
\end{equation*}%
when $\left\vert y\right\vert \geq \frac{\tau }{2}$, $\frac{t}{2}\leq \tau
\leq t$, so that
\begin{eqnarray*}
I &\lesssim &e^{-\frac{c_{0}t}{2}}\int_{0}^{\frac{t}{2}}\int_{\mathbb{R}%
^{3}}\left( 1+\left\vert x-y\right\vert \right) ^{-\hat{P}}\left( 1+\tau
\right) ^{-m}dyd\tau \\
&&+\int_{\frac{t}{2}}^{t}\int_{\left\vert y\right\vert <\frac{\tau }{2}%
}e^{-c_{0}\left( t-\tau \right) }\left( 1+\left\vert x-y\right\vert \right)
^{-\hat{P}}\left( 1+t\right) ^{-m-N}dyd\tau \\
&&+\int_{\frac{t}{2}}^{t}\int_{\left\vert y\right\vert \geq \frac{\tau }{2}%
}e^{-c_{0}\left( t-\tau \right) }\left( 1+t\right) ^{-m-\hat{P}}\left( 1+%
\frac{\left( \left\vert y\right\vert -\tau \right) ^{2}}{1+\tau }\right)
^{-N}dyd\tau \\
&\lesssim &e^{-\frac{c_{0}t}{4}}+\left( 1+t\right) ^{-m-N}+\left( 1+t\right)
^{-m+\frac{5}{2}-\hat{P}}\lesssim \left( 1+t\right) ^{-m-N}\text{.}
\end{eqnarray*}%
Thus, for $\left( x,t\right) \in \mathcal{D}_{1}$,

\begin{equation*}
I\lesssim \left( 1+t\right) ^{-m-N}\lesssim \left( 1+t\right) ^{-m}\left[ 1+%
\frac{\left( \left \vert x\right \vert -t\right) ^{2}}{1+t}\right] ^{-N}%
\text{.}
\end{equation*}

For $\left( x,t\right) \in \mathcal{D}_{2}$,
\begin{eqnarray*}
I &\lesssim &\int_{0}^{\frac{t}{2}}\int_{\mathbb{R}^{3}}e^{-c_{0}\left(
t-\tau \right) }\left( 1+\left \vert x-y\right \vert \right) ^{-\hat{P}%
}dyd\tau +\int_{\frac{t}{2}}^{t}\int_{\mathbb{R}^{3}}e^{-c_{0}\left( t-\tau
\right) }\left( 1+\left \vert x-y\right \vert \right) ^{-\hat{P}}\left(
1+t\right) ^{-m}dyd\tau \\
&\lesssim &e^{-\frac{c_{0}t}{3}}+\left( 1+t\right) ^{-m}\lesssim \left(
1+t\right) ^{-m}\lesssim \left( 1+t\right) ^{-m}\left[ 1+\frac{\left( \left
\vert x\right \vert -t\right) ^{2}}{1+t}\right] ^{-N}\text{.}
\end{eqnarray*}

For $\left( x,t\right) \in \mathcal{D}_{3}$, we split the the space domain
into two parts $\{ \left \vert y\right \vert <\frac{\left \vert
x\right
\vert +\tau }{2}\}$ and $\{ \left \vert y\right \vert \geq \frac{%
\left \vert x\right \vert +\tau }{2}\}$. If $\left \vert y\right \vert <%
\frac{\left
\vert x\right \vert +\tau }{2}$, then
\begin{equation*}
\left \vert x-y\right \vert >\left \vert x\right \vert -\frac{\left \vert
x\right \vert +\tau }{2}=\frac{\left \vert x\right \vert -t}{2}+\frac{t-\tau
}{2}\text{;}
\end{equation*}%
if $\left \vert y\right \vert \geq \frac{\left \vert x\right \vert +\tau }{2}
$, then%
\begin{equation*}
\left \vert y\right \vert -\tau \geq \frac{\left \vert x\right \vert +\tau }{%
2}-\tau =\frac{\left \vert x\right \vert -t}{2}+\frac{t-\tau }{2}\text{.}
\end{equation*}%
Therefore,
\begin{eqnarray*}
I &\lesssim &\int_{0}^{\frac{t}{2}}\int_{\left \vert y\right \vert <\frac{%
\left \vert x\right \vert +\tau }{2}}e^{-c_{0}\left( t-\tau \right) }\left(
1+\frac{\left \vert x\right \vert -t}{2}+\frac{t-\tau }{2}\right) ^{-%
\hat{P}}\left( 1+\tau \right) ^{-m}\left( 1+\frac{\left( \left \vert
y\right \vert -\tau \right) ^{2}}{1+\tau }\right) ^{-N}dyd\tau \\
&&+\int_{0}^{\frac{t}{2}}\int_{\left \vert y\right \vert \geq \frac{\left
\vert x\right \vert +\tau }{2}}e^{-c_{0}\left( t-\tau \right) }\left(
1+\left \vert x-y\right \vert \right) ^{-\hat{P}}\left( 1+\tau \right)
^{-m}\left( 1+\frac{\left( \left \vert x\right \vert -t\right) ^{2}+\left(
t-\tau \right) ^{2}}{1+\tau }\right) ^{-N}dyd\tau \\
&&+\int_{\frac{t}{2}}^{t}\int_{\left \vert y\right \vert <\frac{\left \vert
x\right \vert +\tau }{2}}e^{-c_{0}\left( t-\tau \right) }\left( 1+\frac{%
\left \vert x\right \vert -t}{2}+\frac{t-\tau }{2}\right) ^{-\hat{P}%
}\left( 1+t\right) ^{-m}\left( 1+\frac{\left( \left \vert y\right \vert
-\tau \right) ^{2}}{1+\tau }\right) ^{-N}dyd\tau \\
&&+\int_{\frac{t}{2}}^{t}\int_{\left \vert y\right \vert \geq \frac{\left
\vert x\right \vert +\tau }{2}}e^{-c_{0}\left( t-\tau \right) }\left(
1+\left \vert x-y\right \vert \right) ^{-\hat{P}}\left( 1+t\right)
^{-m}\left( 1+\frac{\left( \left \vert x\right \vert -t\right) ^{2}+\left(
t-\tau \right) ^{2}}{1+t}\right) ^{-N}dyd\tau \\
&\lesssim &e^{-\frac{c_{0}t}{3}}\left( 1+\frac{\left( \left \vert x\right
\vert -t\right) ^{2}}{1+t}\right) ^{-\frac{\hat{P}}{2}}+e^{-\frac{c_{0}t%
}{3}}\left( 1+\frac{\left( \left \vert x\right \vert -t\right) ^{2}}{1+t}%
\right) ^{-N} \\
&&+\left( 1+t\right) ^{-m+\frac{5}{2}-\frac{\hat{P}}{2}}\left( 1+\frac{%
\left( \left \vert x\right \vert -t\right) ^{2}}{1+t}\right) ^{-\frac{%
\hat{P}}{2}}+\left( 1+t\right) ^{-m}\left( 1+\frac{\left( \left \vert
x\right \vert -t\right) ^{2}}{1+t}\right) ^{-N} \\
&\lesssim &\left( 1+t\right) ^{-m}\left( 1+\frac{\left( \left \vert x\right
\vert -t\right) ^{2}}{1+t}\right) ^{-N}\,.
\end{eqnarray*}

For $\left( x,t\right) \in \mathcal{D}_{4}$, we split the integral into
three parts%
\begin{equation*}
I=\left( \int_{0}^{\frac{2t}{3}}\int_{\mathbb{R}^{3}}+\int_{\frac{2t}{3}%
}^{t}\int_{\left \vert y\right \vert \leq \frac{\left \vert x\right \vert
+\tau }{2}}+\int_{\frac{2t}{3}}^{t}\int_{\left \vert y\right \vert >\frac{%
\left \vert x\right \vert +\tau }{2}}\right) \left( \cdots \right) dyd\tau
=I_{1}+I_{2}+I_{3}\text{.}
\end{equation*}%
It immediately follows that%
\begin{equation*}
I_{1}\lesssim e^{-\frac{c_{0}t}{3}}\lesssim \left( 1+t\right) ^{-m}\left( 1+%
\frac{\left( \left \vert x\right \vert -t\right) ^{2}}{1+t}\right) ^{-N}%
\text{.}
\end{equation*}%
If $\frac{2}{3}t\leq \tau \leq t$, $\left \vert y\right \vert \leq \frac{%
\left \vert x\right \vert +\tau }{2}$, then
\begin{equation*}
\tau -\left \vert y\right \vert \geq \tau -\frac{\left \vert x\right \vert
+\tau }{2}=\frac{\tau -\left \vert x\right \vert }{2}\geq \frac{t}{12}\geq
\frac{t-\left \vert x\right \vert }{12}\text{;}
\end{equation*}%
if $\frac{2}{3}t\leq \tau \leq t$, $\left \vert y\right \vert >\frac{%
\left
\vert x\right \vert +\tau }{2}$, then
\begin{equation*}
\left \vert x-y\right \vert \geq \frac{\left \vert x\right \vert +\tau }{2}%
-\left \vert x\right \vert =\frac{\tau -\left \vert x\right \vert }{2}\geq
\frac{t}{12}\geq \frac{t-\left \vert x\right \vert }{12}\text{.}
\end{equation*}%
Therefore,
\begin{eqnarray*}
I_{2} &\lesssim &\int_{\frac{2t}{3}}^{t}\int_{\mathbb{R}^{3}}e^{-c_{0}\left(
t-\tau \right) }\left( 1+\left \vert x-y\right \vert \right) ^{-\hat{P}%
}\left( 1+t\right) ^{-m}\left( 1+\frac{\left( \left \vert x\right \vert
-t\right) ^{2}}{1+t}\right) ^{-N}dyd\tau \\
&\lesssim &\left( 1+t\right) ^{-m}\left( 1+\frac{\left( \left \vert x\right
\vert -t\right) ^{2}}{1+t}\right) ^{-N}\text{,}
\end{eqnarray*}%
\begin{eqnarray*}
I_{3} &\lesssim &\int_{\frac{2t}{3}}^{t}\int_{\left \vert y\right \vert >%
\frac{\left \vert x\right \vert +\tau }{2}}e^{-c_{0}\left( t-\tau \right)
}\left( 1+t-\left \vert x\right \vert \right) ^{-\hat{P}}\left(
1+t\right) ^{-m}\left( 1+\frac{\left( \left \vert y\right \vert -\tau
\right) ^{2}}{1+\tau }\right) ^{-N}dyd\tau \\
&\lesssim &\left( 1+t\right) ^{-m+\frac{5}{2}}\left( 1+t-\left \vert x\right
\vert \right) ^{-\hat{P}}\lesssim \left( 1+t\right) ^{-m+\frac{5}{2}-%
\frac{\hat{P}}{2}}\left( 1+\frac{\left( \left \vert x\right \vert
-t\right) ^{2}}{1+t}\right) ^{-\frac{\hat{P}}{2}}\text{.}
\end{eqnarray*}%
Consequently,
\begin{equation*}
I\lesssim \left( 1+t\right) ^{-m}\left( 1+\frac{\left( \left \vert x\right
\vert -t\right) ^{2}}{1+t}\right) ^{-N}\text{.}
\end{equation*}

For $\left( x,t\right) \in \mathcal{D}_{5}$, we split the integral into
three parts%
\begin{equation*}
I=\left( \int_{0}^{\frac{t}{2}}+\int_{\frac{t}{2}}^{\frac{t+\left \vert
x\right \vert }{2}}+\int_{\frac{t+\left \vert x\right \vert }{2}}^{t}\right)
\int_{\mathbb{R}^{3}}\left( \cdots \right) dyd\tau =J_{1}+J_{2}+J_{3}\text{.}
\end{equation*}%
It immediately follows that
\begin{equation*}
J_{1}\lesssim e^{-\frac{c_{0}t}{2}}\int_{0}^{\frac{t}{2}}\int_{\mathbb{R}%
^{3}}\left( 1+\left \vert x-y\right \vert \right) ^{-\hat{P}}\left(
1+\tau \right) ^{-m}dyd\tau \lesssim e^{-\frac{c_{0}t}{3}}\text{,}
\end{equation*}%
\begin{eqnarray*}
J_{2} &\lesssim &\left( 1+t\right) ^{-m}e^{-\frac{c_{0}}{2}\left( \frac{%
t-\left \vert x\right \vert }{2}\right) }\int_{\frac{t}{2}}^{\frac{t+\left
\vert x\right \vert }{2}}\int_{\mathbb{R}^{3}}e^{-\frac{c_{0}\left( t-\tau
\right) }{2}}\left( 1+\left \vert x-y\right \vert \right) ^{-\hat{P}%
}dyd\tau \\
&\lesssim &\left( 1+t\right) ^{-m}\left( 1+\frac{\left( t-\left \vert
x\right \vert \right) ^{2}}{1+t}\right) ^{-N}\text{.}
\end{eqnarray*}%
For $J_{3}$, we further split the integral into two parts%
\begin{equation*}
J_{3}=\int_{\frac{t+\left \vert x\right \vert }{2}}^{t}\left( \int_{\left
\vert y\right \vert \leq \frac{\left \vert x\right \vert +\tau }{2}%
}+\int_{\left \vert y\right \vert >\frac{\left \vert x\right \vert +\tau }{2}%
}\right) \left( \cdots \right) dyd\tau \text{.}
\end{equation*}%
If $\frac{t+\left \vert x\right \vert }{2}\leq \tau \leq t$, $\left \vert
y\right \vert \leq \frac{\left \vert x\right \vert +\tau }{2}$, then
\begin{equation*}
\tau -\left \vert y\right \vert \geq \tau -\frac{\left \vert x\right \vert
+\tau }{2}=\frac{\tau -\left \vert x\right \vert }{2}\geq \frac{t-\left
\vert x\right \vert }{4}\text{;}
\end{equation*}%
if $\frac{t+\left \vert x\right \vert }{2}\leq \tau \leq t$, $\left \vert
y\right \vert >\frac{\left \vert x\right \vert +\tau }{2}$, then
\begin{equation*}
\left \vert x-y\right \vert \geq \frac{\left \vert x\right \vert +\tau }{2}%
-\left \vert x\right \vert =\frac{\tau -\left \vert x\right \vert }{2}\geq
\frac{t-\left \vert x\right \vert }{4}\text{.}
\end{equation*}%
Thus,
\begin{eqnarray*}
J_{3} &\lesssim &\int_{\frac{t+\left \vert x\right \vert }{2}%
}^{t}\int_{\left \vert y\right \vert \leq \frac{\left \vert x\right \vert
+\tau }{2}}e^{-c_{0}\left( t-\tau \right) }\left( 1+\left \vert x-y\right
\vert \right) ^{-\hat{P}}\left( 1+t\right) ^{-m}\left( 1+\frac{\left(
t-\left \vert x\right \vert \right) ^{2}}{1+t}\right) ^{-N}dyd\tau \\
&&+\int_{\frac{t+\left \vert x\right \vert }{2}}^{t}\int_{\left \vert
y\right \vert >\frac{\left \vert x\right \vert +\tau }{2}}e^{-c_{0}\left(
t-\tau \right) }\left( 1+t-\left \vert x\right \vert \right) ^{-\hat{P}%
}\left( 1+t\right) ^{-m}\left( 1+\frac{\left( \left \vert y\right \vert
-\tau \right) ^{2}}{1+\tau }\right) ^{-N}dyd\tau \\
&\lesssim &\left( 1+t\right) ^{-m}\left( 1+\frac{\left( t-\left \vert
x\right \vert \right) ^{2}}{1+t}\right) ^{-N}+\left( 1+t\right) ^{-m+\frac{5%
}{2}}\left( 1+t-\left \vert x\right \vert \right) ^{-\hat{P}} \\
&\lesssim &\left( 1+t\right) ^{-m}\left( 1+\frac{\left( t-\left \vert
x\right \vert \right) ^{2}}{1+t}\right) ^{-N}\text{.}
\end{eqnarray*}%
Combining $J_{1}$, $J_{2}$ and $J_{3}$ gives
\begin{equation*}
I\lesssim \left( 1+t\right) ^{-m}\left( 1+\frac{\left( t-\left \vert x\right
\vert \right) ^{2}}{1+t}\right) ^{-N}\text{.}
\end{equation*}%
The proof is completed.
\end{proof}

\hspace*{2em}

\noindent \textbf{Acknowledgment}\hspace*{1em}
This work is partially supported by the National Key R\&D Program of China under grant 2022YFA1007300. Y.-C. Lin is supported by the National Science and Technology Council under
the grant NSTC 112-2115-M-006-006-MY2. H.-T. Wang is supported by NSFC under Grant No.
12371220 and 12031013, the Strategic Priority Research Program of Chinese Academy of Sciences under Grant No. XDA25010403. K.-C. Wu is supported by the National Science and Technology Council under the grant NSTC 112-2628-M-006-006 -MY4 and National Center for Theoretical Sciences.
\\

\noindent \textbf{Data Availability}\hspace*{1em}
Data sharing is not applicable to this article as no datasets were generated or analyzed during the current study.
\\

\noindent \textbf{Declarations}
\\

\noindent \textbf{Conflict of interest}\hspace*{1em}
The authors declare that they have no Conflict of interest.

\end{document}